\documentclass[a4paper, 11pt, twoside, reqno]{amsart}
  \usepackage[a4paper,inner=2cm,outer=2cm,top=4cm,bottom=3.5cm]{geometry}
\usepackage{amsmath,amscd}
\usepackage{amssymb}
 \usepackage{amsthm}
\usepackage{comment}
\usepackage{mathrsfs}
\usepackage{graphicx, xcolor}
\usepackage{xcolor}
\usepackage{mathtools}
\usepackage{mathrsfs}
\usepackage[normalem]{ulem}
\usepackage[ocgcolorlinks, linkcolor=blue,citecolor=red]{hyperref}

\usepackage{bm}
\usepackage{bbm}
\usepackage{url}

\usepackage[utf8]{inputenc}
\usepackage{mathtools,amssymb}
\usepackage{esint}

    \usepackage{amssymb}
\usepackage{tikz}
\usepackage{dsfont}
\usepackage{relsize}
\usepackage{url}
\usepackage{xcolor}
\usepackage{graphicx}
\usepackage{mathrsfs}
\usepackage[shortlabels]{enumitem}
\usepackage{lineno}
\usepackage{amsmath}
\usepackage{enumitem}
\usepackage{amsthm} 
\usepackage{verbatim}
\usepackage{dsfont}
\usepackage{tikz}
\numberwithin{equation}{section}

\allowdisplaybreaks

\mathtoolsset{showonlyrefs}

\graphicspath{{images/}}
\theoremstyle{definition}
\newtheorem{step}{Step}

\newtheorem{theorem}{Theorem}[section]
\newtheorem{lemma}[theorem]{Lemma}

\newtheorem{proposition}[theorem]{Proposition}

\allowdisplaybreaks[0]

\title[ Semi-linear wave equation]{Reconstruction of potential and damping coefficients in a semi-linear wave equation}

\author[R. Bhardwaj]{Rahul Bhardwaj}
\author[M. Kumar]{Mandeep Kumar}
\author[M. Vashisth]{Manmohan Vashisth}
\address{{Department of Mathematics, Indian Institute of Technology Ropar, Rupnagar, Punjab-140001, INDIA.}}

\email{bhardwaj161067@gmail.com}
\email{mandeep.sansanwal@gmail.com}
\email{manmohanvashisth@iitrpr.ac.in}

\newcommand{\R}{{\mathbb R}}

\newcommand{\eps}{\epsilon}

\DeclareMathOperator{\supp}{supp} 

\begin{document}
	\begin{abstract}
In this article, we investigate an inverse problem for a semi-linear wave equation posed on bounded domain in $\mathbb{R}^{n+1}$, with $n \geq 2$. Our primary objective is to reconstruct the damping coefficient, the linear and nonlinear potentials from the associated Dirichlet-to-Neumann map. The analysis is based on a \emph{higher-order linearization} method. As a key step, we establish the existence of suitable asymptotic solutions, crucial for reconstructing the nonlinear potential. In addition, we also provide a detailed study of the corresponding forward problem.

\vspace{.2cm}
\noindent{\bf Keywords.}  Wave Equation, higher-order linearization, inverse problems, reconstruction, asymptotic solutions.
		
		\noindent{\bf Mathematics Subject Classification (2020)}: 35R30, 35L05, 44A12

	\end{abstract}
	\maketitle
	 \tableofcontents
   \section{Introduction}
 \subsection{Mathematical setup and statement of main result}
 Let $\Omega$ be an open, connected and bounded subset of $\mathbb{R}^n$, $n\geq 2$, with a smooth boundary denoted by $\partial\Omega$. For $T>0$, we define the space-time domain $\Omega_T := (0,T)\times\Omega$ and denote its lateral boundary by $\Sigma := (0,T)\times\partial\Omega$. In the present article, we consider the following initial-boundary value problem (IBVP) for a semi-linear wave Equation with lower order perturbations 
  \begin{align}\label{equation; IBVP}
    \begin{cases}
      \Box u(t,x) + a(x)\partial_t u(t,x) + b(x) u(t,x) + q(x) u^{\ell}(t,x) = 0,  &(t,x) \in \Omega_T,\\
    u(t,x)  = f(t,x), & (t,x) \in\Sigma,\\
     u(0, x )=0,~~ \partial_t u(0, x ) = 0,  & \quad x \in\Omega,
    \end{cases}
\end{align}
where $\Box$ is the standard wave operator $\partial_t^2 - \Delta_x$ in $\mathbb{R}^{1+n}$, $a$ represent the damping coefficient,  $b$ and $q$  represent the linear and nonlinear potentials respectively,  and $\ell\ge2$ is an integer. 

Now for $a,b,q\in C_{c}^{\infty}(\Omega)$  and $f\in \mathcal{D}^{\delta}_{m+1}$ (see \eqref{E-delta} for its definition), 
we proved in Theorem \ref{main_thm:welposedness}, (see  section \ref{preliminaries}), that there exists a unique solution $u_f$ to \eqref{equation; IBVP} such that $\partial_\nu u_f\in H^{m}(\Sigma)$ and $\lVert \partial_\nu u_f\rVert_{H^{m}(\Sigma)}\leq C \lVert f\rVert_{H^{m+1}(\Sigma)}$ for some constant $C>0$ depending only on $a,b,q,\Omega$ and $T$. Here $\partial_{\nu} u_f|_{\Sigma}:=\nu\cdot\nabla u_f|_{\Sigma}$, with $\nu$ denotes the outward unit normal to  $\partial\Omega$. Building on this, we observe that the Dirichlet-to-Neumann (DN) map $\Lambda_{a,b,q}:\mathcal{D}^\delta_{m+1} \rightarrow H^{m}(\Sigma)$ given by 
\begin{align}\label{eq:DN map}
\begin{aligned}
\Lambda_{a,b,q}(f):=\partial_{\nu} u_f \big|_{\Sigma},\ \ f\in \mathcal{D}^\delta_{m+1}
\end{aligned} 
\end{align}
is well-defined whenever $u_f$ is solution to \eqref{equation; IBVP} corresponding to the   Dirichlet data $f\in \mathcal{D}^\delta_{m+1}$.

The main  objective of this article is to 
determine the 
coefficients $a$, $b$, and $q$ appearing in equation \eqref{equation; IBVP}
from the boundary measurements of solutions. 
We derive explicit reconstruction formulas that allow us to recover the coefficients  $a$, $b$, and $q$ from the knowledge of  DN map $\Lambda_{a,b,q}(f)$ measured for all $f\in \mathcal{D}^{\delta}_{m+1}$. 
More precisely, one of the main results of this article is stated in the following Theorem. 
\begin{theorem}\label{th: main result}
   {\it  Let $\Omega \subset \mathbb{R}^n$ be a bounded domain with smooth boundary $\partial\Omega$,  
 and let $T > \operatorname{diam}(\Omega)$.
Then the damping coefficient $a \in C_c^{\infty}(\Omega)$ and the potential terms $b,q \in C_c^{\infty}(\Omega)$  appearing in the semi-linear wave  Equation \eqref{equation; IBVP} 
are uniquely reconstructed from knowledge of the Dirichlet-to-Neumann map $\Lambda_{a,b,q}(f)$ measured for all $f \in \mathcal{D}^\delta_{m+1}$.}
\end{theorem}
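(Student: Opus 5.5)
We argue by higher-order linearization. We take Dirichlet data $f=\sum_{j=1}^{\ell}\epsilon_j f_j$ with small parameters $\epsilon_j$ and $f_j$ smooth and compatible, so that $f\in\mathcal{D}^\delta_{m+1}$. By the well-posedness result Theorem \ref{main_thm:welposedness}, the solution $u_f$ depends smoothly on $\epsilon=(\epsilon_1,\dots,\epsilon_\ell)$ near $0$.

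\textbf{First-order linearization.} Since $\ell\ge 2$ and $u_0=0$, we have
\[
\partial_{\epsilon_j}u_f\big|_{\epsilon=0}=v_j ,
\]
where $v_j$ solves the linear problem $\Box v+a\partial_t v+bv=0$ with $v|_\Sigma=f_j$ and zero Cauchy data. Hence $\partial_{\epsilon_j}\Lambda_{a,b,q}(f)|_{\epsilon=0}=\Lambda_{a,b}^{\rm lin}(f_j)$, so the linear DN map is determined.

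To recover $a$ and $b$ from the linear map we use the integral identity. Let $w$ solve the backward adjoint problem $\Box w - a\partial_t w + bw = 0$ with $w(T)=\partial_t w(T)=0$. Then
\[
\int_\Sigma (\Lambda^{\rm lin}f)\, w \;-\; \int_\Sigma f\,\partial_\nu w \;=\; \text{(known quantity)}.
\]
We plug in geometric-optics solutions
\[
v = e^{i\lambda(t-x\cdot\omega)}\Big(A_0+\lambda^{-1}A_1\Big)+R_\lambda ,\qquad
A_0=\phi(x+t\omega)\,e^{-\frac12\int_0^t a(x+(t-s)\omega)\,ds},
\]
and take $w$ with the opposite phase and amplitude $e^{+\frac12\int a}$. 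This kills the oscillation, and $T>\operatorname{diam}\Omega$ ensures that the rays cross $\Omega$ within the time interval, so the boundary data are usable and the remainder satisfies $\|R_\lambda\|\lesssim\lambda^{-1}$. Letting $\lambda\to\infty$ yields the light-ray transform of $a$, more precisely $\int\phi^2\big(e^{-\int_{\mathbb R} a(y+s\omega)ds}-1\big)\,dy$. This gives the X-ray transform of $a$ in every direction, and inverting it reconstructs $a$. With $a$ known, the next term in $\lambda^{-1}$, or equivalently the same identity with $A_0$ normalized, gives the X-ray transform of $b$ weighted by the known exponential factor. Again this is a standard X-ray inversion, so $b$ is reconstructed. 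Explicit formulas follow from the Fourier slice theorem.

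\textbf{$\ell$-th order linearization.} Differentiating $\ell$ times,
\[
w=\partial^\ell_{\epsilon_1\cdots\epsilon_\ell}u_f\big|_{\epsilon=0}
\]
solves $\Box w + a\partial_t w + bw = -\ell!\,q\,v_1\cdots v_\ell$ with zero boundary and initial data, and $\partial_\nu w|_\Sigma$ is known from $\Lambda_{a,b,q}$. Multiplying by a solution $v_0$ of the adjoint problem and integrating by parts gives
\[
\ell!\int_{\Omega_T} q\,v_0v_1\cdots v_\ell \,dx\,dt = -\int_\Sigma \partial_\nu w\; v_0 ,
\]
which is known.

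\textbf{Main obstacle: the density step.} We must choose $v_0,\dots,v_\ell$ so that the product concentrates and isolates $q$. Our plan is to take all $v_j$ as geometric-optics solutions along the same direction $\omega$, with phases chosen so that they cancel. For example, take $v_1,\dots,v_{\ell}$ with phases $e^{i\lambda_j(t-x\cdot\omega)}$ such that $\sum\lambda_j=0$ (for instance $\lambda_1=\dots=\lambda_{\ell-1}=\lambda$ and $\lambda_\ell=-(\ell-1)\lambda$), and take $v_0$ non-oscillatory, or absorb it into the count. The leading amplitude is then
\[
\prod_j\phi_j(x+t\omega)\,\exp\Big(\mp\tfrac12\int a\Big),
\]
where the damping factors do not cancel: there are $\ell$ forward factors and one adjoint factor, and this is fine because $a$ is already known. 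As $\lambda\to\infty$ we obtain
\[
\int\!\!\int q(x)\,\Phi(x+t\omega)\,E_a(t,x)\,dx\,dt
\]
with $\Phi$ arbitrary and $E_a$ known and positive. We then choose $\Phi$ concentrated near a line to get a weighted X-ray transform of $q$. Alternatively we can use complex-phase (Gaussian-beam) solutions to localize at a point, letting the beam width shrink to recover $q(x_0)$ pointwise. This is the step requiring the "suitable asymptotic solutions" with remainder estimates in the spaces where the product makes sense: $H^{m}$ with $m>n/2$, so that $H^m$ is an algebra. Controlling the remainders for nonlinear products is the main technical point.
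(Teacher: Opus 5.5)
The overall architecture matches the paper: first-order linearization for $a,b$, then $\ell$-th order linearization and an $(\ell+1)$-fold product for $q$. However, the linear step fails as written. If $w$ solves $\Box w-a\partial_t w+bw=0$ with the \emph{unknown} $a,b$ and $w(T)=\partial_t w(T)=0$, Green's formula gives $\int_\Sigma(\Lambda^{\mathrm{lin}}f)\,w-\int_\Sigma f\,\partial_\nu w=0$ identically. So the pairing contains no information, and $w$ is not computable from the data anyway. Your amplitudes confirm this: $e^{-\frac12\int a}$ and $e^{+\frac12\int a}$ cancel, the leading product is $\phi^2$, and no transform of $a$ can appear.

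The missing idea is to pair with a \emph{known} auxiliary solution. The paper takes $v_0$ with $\Box v_0=0$ and zero final data, so that $\int_{\Omega_T}(a\,\partial_t v\,v_0+b\,v\,v_0)\,dx\,dt$ is computable from $\Lambda$. The $O(\tau)$ term then isolates $\int a\,\varphi^2(x+t\omega)A^+\,dx\,dt$. The exact integration $\int_0^T a(y-t\omega)A^+(t,y-t\omega)\,dt=-2\big(e^{-\frac12\int_0^T a(y-\rho\omega)\,d\rho}-1\big)$ then yields the X-ray transform of $a$.

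Only after $a$ is recovered does one pair with backward solutions of $\Box-a\partial_t$ (now known). There $A^+A^-=1$ and the X-ray transform of $b$ appears; this is your ``$A_0$ normalized'' variant. Reading $b$ off the next order in $\lambda^{-1}$ would fail with $O(\lambda^{-1})$ remainders, because after multiplying by $\lambda$ the remainder cross terms are $O(1)$, the same order as the $b$-term. Also, with phase $t-x\cdot\omega$ the amplitude must depend on $x-t\omega$; your $\phi(x+t\omega)$ and exponential belong to the phase $t+x\cdot\omega$.

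For $q$ you leave two points open. First, the $L^\infty$ remainder control you flag is exactly what the paper's asymptotic solutions provide. Expanding the amplitudes to order $N$ in $\tau^{-1}$, with transport equations for the correctors, gives $\|R\|_{H^m(\Omega_T)}\lesssim\tau^{2m-N}$. Taking $N$ large and using Sobolev embedding then gives $\|R\|_{L^\infty}\lesssim\tau^{-1}$, which is what makes all cross terms in the $(\ell+1)$-fold product vanish.

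Second, and more seriously, you stop at a weighted X-ray transform with a ``known positive'' weight, and positivity alone does not give injectivity (Boman's counterexample in the plane). What saves the argument is the specific weight $\exp\!\big(-\frac{\ell-1}{2}\int_0^t a(x+s\omega)\,ds\big)$. With it, the data are the attenuated X-ray transform of $q$ with known attenuation $\frac{\ell-1}{2}a$, which is invertible (Novikov's formula on every 2-plane).

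The paper writes these data as $\Psi$ and invokes $\omega\cdot\nabla\Psi+\frac{\ell-1}{2}a\Psi=q$. Note that this recovers $q$ only if $\Psi$ is known at points of $\Omega$. The boundary data give $\Psi$ only beyond $\Omega$ on each line, and for a fixed $\omega$ that cannot determine $q$. So an attenuated-transform inversion is what genuinely closes this step.

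Finally, Gaussian beams do not localize to a point here. For $\ell=2$, phase cancellation forces all three waves to be collinear, and collinear beams still only yield ray integrals.
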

This result can be viewed as a continuation of the reconstruction result established in \cite{Lassas2022UniquenessRA}, where Lassas \emph{et al.} investigated an IBVP for a semi-linear wave Equation in $\mathbb{R}^{n+1}$, $n \geq 1$,
involving a time-dependent nonlinear potential without linear 
potential and damping terms. Using the knowledge of the DN map, they
established the reconstruction result for the nonlinear potential. 
However, due to the finite speed of propagation of the wave operator, recovery of time-dependent coefficients in the entire space-time domain is not possible from the DN map; for more details, see~[\cite{Kian2017}, section 1]. Consequently, the authors of \cite{Lassas2022UniquenessRA}, reconstructed the time-dependent potential only in a compact subset of the space-time domain. 
In contrast, the present work considers 
 time-independent damping effects together with both linear and nonlinear potential terms.  We show that all unknown coefficients that appear in the IBVP  \eqref{equation; IBVP} can be uniquely reconstructed from the DN map throughout the entire domain $\Omega$. 
\subsection{Physical significance and motivation}
Wave phenomena are ubiquitous in nature and occur across a wide range of physical systems, including acoustics, electromagnetism, elasticity, seismology, and fluid dynamics. The mathematical model considered in this work describes the essential features of wave propagation in a medium that can absorb energy, vary in space, and respond nonlinearly to wave amplitude. 
Specifically, we study the scalar function $u$ governed by
\begin{align}\label{eq:main}
    \Box u(t,x) + a(x)\partial_t u(t,x) + b(x)u(t,x) + q(x)u^{\ell}(t,x) = 0,\quad \text{ in}\qquad (0,T)\times\Omega,
\end{align}
where the $\Box$ operator plays a central role in wave propagation phenomena, as it naturally arises in the study of hyperbolic partial differential equations (PDEs) and their physical principles, such as causality and the finite speed of wave propagation. In addition, the coefficient functions $a$, $b$, and $q$ describe the physical properties of the medium.

The term $a$ represents a damping effect arising from friction or material absorption.
The term $b$ acts as a linear potential, describing a restoring or refractive effect arising from the resistance of the medium to deformation.
The coefficient $q$ introduces a nonlinear potential, which allows the internal characteristics of the medium to change with position. This nonlinear term captures the amplitude-dependent behavior that arises in many physical systems. Equation \eqref{eq:main}  can be viewed as a generalized damped nonlinear \emph{Klein–Gordon equation} (see \cite{morawetz1968time}), which arises in quantum mechanics. These types of models have many applications. For example, in geophysics, it describes how seismic waves travel through the layers of the Earth, where stiffness and damping vary with depth. From the perspective of inverse problems, measurements of the boundary contain valuable information about the coefficients $a$, $b$, and $q$. Reconstructing these parameters from the observed data helps reveal the hidden internal structure of the medium. 

\subsection{Existing articles}
The inverse problem considered in the present article can be put into a subset of the Calder\'on type inverse problems for PDEs. 
Let us start by recalling some key developments in the study of such kind of  inverse problems for linear PDEs. In the elliptic setting, the foundational work of Calder\'{o}n~\cite{Calderon1980}, now known as the Calder\'{o}n inverse problem, aims to determine the electrical conductivity of a medium from boundary measurements. The question of uniquely recovering a time-independent scalar potential associated with linear hyperbolic PDEs from boundary measurements was first addressed by Bukhgeim and Klibanov in~\cite{Bukhgeim1981}.

Inspired by the pioneering ideas of construction of complex geometric optics solutions introduced by Sylvester and Uhlmann~\cite{Sylvester1987AGU} in their study of the Calder\'{o}n problem associated to conductivity and Schr\"odinger equation, Rakesh and Symes in \cite{Rakesh01011988} established a uniqueness result to recover the time-independent potential in hyperbolic PDEs using the measurements of the Neumann-to-Dirichlet map. Furthermore, using the DN map, Rakesh in \cite{rakesh1990reconstruction}  derived a reconstruction formula for recovering the time-independent potential appearing in the wave Equation. In \cite{Isakov1991AnIH}, Isakov investigated a uniqueness issue for simultaneous determination of potential and damping coefficients in the time-independent case for the wave Equation. In a general geometric setting, the uniqueness of inverse problems for wave Equations with time-independent coefficients was established by Eskin \cite{MR2235639,MR2441006}. We also refer to the book [\cite{isakov2006inverse}, chapter 8], in which Isakov mentioned various types of inverse problems for hyperbolic PDEs. Over the past few decades, several works have emerged on the recovery of various coefficients in the linear wave Equation through boundary measurements; see, for example,~\cite{RammSjostrand1991,KSV,Krishnan17082020,lasiecka1986non,Hu2017DeterminationOS,Kian2017,Kian2016RecoveryOT,Salazar2010DeterminationOT,Stefanov1989InverseSP,MR4343270,kumar2025h,doi:10.1137/23M1588676,LiuSaksalaYan2025} and the references therein.

 Most studies on inverse problems for nonlinear PDEs employ the so-called  \emph{higher-order linearization} method, originally introduced by Isakov~\cite{Isakov1993OnUI} in the study of inverse problems for nonlinear parabolic PDEs.

For nonlinear wave Equations, Nakamura et al.~\cite{NakamuraWatanabe2008,NakamuraWatanabeKaltenbacher2009,Nakamura2020InverseIB} established uniqueness and reconstruction results for both linear and nonlinear
coefficients in nonlinear wave Equations posed on Euclidean domains, using
boundary measurements map. In a related work, Lin et al.~\cite{lin2024determining} studied an inverse problem for a time-dependent semi-linear wave Equation in the absence of a damping term. They proved uniqueness results for the nonlinear term as well as for the source terms (initial displacement and initial velocity) using measurements of the DN map. More recently, Xiang et al.~\cite{Qiu2025UniquenessRF} investigated the simultaneous recovery of multiple unknown nonlinear coefficients together with the source term for the semi-linear wave Equation in the absence of a damping term, based on the measurement of the DN map.
In a general geometric setting, a remarkable development by Kurylev, Lassas and Uhlmann~\cite{Kurylev2018InversePF} revealed that the presence of nonlinearity in a wave Equation can be advantageous for the associated inverse problem. Using the technique of \emph{higher-order linearization method}, they proved that the source-to-solution map uniquely determines the global topology, differentiable structure, and conformal class of the metric on a global hyperbolic Lorentzian manifold of dimension $3+1$. 
Later, Hintz and Uhlmann~\cite{Hintz2021TheDM} extended these results and proved that the DN map allows the recovery of both the Lorentzian metric and the nonlinear coefficient. For more general classes of nonlinear operators, Oksanen et al.~\cite{Oksanen2020InversePF} established global uniqueness results to determine coefficients in nonlinear real principal-type Equations from two different
boundary determination methods. 

Furthermore, nonlinear inverse problems arising in various physical models such as the Westervelt, Jordan-Moore-Gibson-Thompson (JMGT), nonlinear elastic wave, and nonlinear progressive wave Equations have been extensively investigated in recent years; see, for example,~\cite{Uhlmann2023DeterminationOT,DEHOOP2019347,Uhlmann2021NonlinearUI,Fu2023InversePO,Li2023InversePF,Uhlmann2022AnIB,Wang2016InversePF,CLOP,FIKO,Kurylev2014InversePF,LUW,LTZ,Lin2024Wellposedness,liu2025partialdatainverseproblem}. Motivated by the studies mentioned above, the present work focuses on an inverse problem for a semi-linear wave equation that accounts for both damping and potential effects. We also refer to several works on inverse problems for other kinds of nonlinear PDEs that employ the \emph{higher-order linearization} technique; see,
for instance, \cite{CARSTEA2019121, Lai2024PartialDI, Kian2020PartialDI, CFKKU,Choulli2021,HarrachLin2023Simultaneous,KrupchykMaSahooSaloStAmant2025,LinLiuZhang2022,LassasOksanenSahooSaloTetlow2025} and the references therein. These studies collectively demonstrate that nonlinear effects can encode rich geometric and physical information, enabling the recovery of quantities and structures that remain inaccessible in purely linear inverse problems.

\subsection{Organization of the article}
The rest of this article is organized as follows. In Section~\ref{preliminaries}, we collect the necessary background material which is used throughout the article and establish the well-posedness of the IBVP \eqref{equation; IBVP}. 
In section~\ref{sec: Asymptotic solutions}, we will construct asymptotic solutions, which are crucial for the reconstruction of the nonlinear potential. Finally, in section~\ref{Sec: main result}, we give a detailed proof of the main result stated in Theorem~\ref{th: main result}.


\section{Well-posedness for the forward problem}\label{preliminaries}

This section is devoted to establishing the well-posedness of the IBVP \eqref{equation; IBVP}. First, we set some notations and provide the definitions required for our subsequent analysis. For  $1\le p\le\infty$, and $k\in\mathbb{N}$, denote by  $L^p(\Omega)$ and $W^{k,p}(\Omega)$ as  usual Lebesgue spaces and the Sobolev spaces of order $k$, equipped with the norms 
\begin{equation*}
\begin{aligned}
\|u\|_{L^p(\Omega)} := 
\begin{cases}
    \left(\int_\Omega |u(x)|^p\,dx\right)^{1/p},\ \mbox{if}\  1\leq p<\infty\\
\operatorname*{ess\,sup}_{x\in\Omega}|u(x)|,\ \  \mbox{if}\ \ p=\infty
\end{cases}
    \end{aligned}
    \end{equation*} 
and 
\begin{equation*}
\begin{aligned}
\|u\|_{W^{k,p}(\Omega)} := \begin{cases}
    \left(\sum_{|\alpha|\le k}\|\partial^\alpha u\|_{L^p(\Omega)}^{\,p}\right)^{1/p},\ \ \mbox{if}\ \ 1\leq p<\infty\\
    \max_{|\alpha|\le k}\|\partial^\alpha u\|_{L^\infty(\Omega)},\ \ \mbox{if}\ \ p=\infty, 
\end{cases}
    \end{aligned}
    \end{equation*} 
respectively, where for an $n-$multi-index $\alpha$, $\partial^\alpha u$ stands for $\alpha$th order  weak derivative of $u$ and   \[\operatorname*{ess\,sup}_{x\in\Omega}|u(x)|:=\inf\left\{m>0:\ \mu\left(\{x\in \Omega:\ \lvert u(x)\rvert>m\}\right)=0\right\},\] $\mbox{with $\mu$ stands for Lebesgue measure on $\mathbb{R}^n$}.$
Also if  $p=2$, then the space $W^{k,2}(\Omega)$ denoted by $H^{k}(\Omega)$ becomes a  Hilbert space equipped with the following inner product
\begin{align}\label{eq:inner-Hk}
  \langle u , v \rangle_{H^{k}(\Omega)}
  := \sum_{|\alpha|\le k} \int_{\Omega}
  \partial^{\alpha} u(x)\, \overline{\partial^{\alpha} v(x)} \, dx,
 \quad \text{for}\ u, v \in H^{k}(\Omega).
\end{align}
We also denote by $H^k_0(\Omega)$, the completion of compactly supported smooth functions on $\Omega$ (denoted by $C_c^{\infty}(\Omega)$), with respect to the $\lVert\cdot\rVert_{H^k(\Omega)}.$ Using a standard result related to trace theory for Sobolev spaces (see \cite{evans2022partial}), we have that the Hilbert space $H^k_0(\Omega)$ is given by 
\begin{align*}
     H^{k}_{0}(\Omega)
  := \left\{ f\in H^{k}(\Omega)\;:\;
            \partial^{\alpha} f=0 \quad \text{on}\ \partial\Omega
            \ \text{ for } |\alpha|\le k-1 \right\},\ k\in \mathbb{N}. 
\end{align*}
Next, we define the time-dependent Sobolev spaces. For a Banach space $(X,\lVert \cdot\rVert_{X})$, we start with defining  
\begin{align*}\begin{aligned}
L^p\left(0,T;X\right)
  &:=\left\{ f:[0,T]\to X \text{ strongly measurable }:\;
        \int_0^T \|f(t)\|^p_{X}\,dt <\infty \right\},\ \ 1\leq p<\infty \ \ \mbox{and}\\
        C([0,T];X)&:=\left\{f:[0,T]\to X: \text{ $f$ is continuous on $[0,T]$ }\right\}
        \end{aligned} 
\end{align*}
equipped with the norms
\begin{align*}
\begin{aligned}
     \|f\|_{L^p(0,T;X)} := \left(\int_0^T \|f(t)\|^p_X\,dt\right)^{1/p}\ \mbox{and}\ \ \|f\|_{C([0,T];X)}:=\max_{t\in [0,T]}\lVert f(t)\rVert_{X} 
\end{aligned}
\end{align*}
respectively. 
For an integer $k\ge0$, we define the time-dependent Sobolev space $H^{k}\left(0,T;X\right)$ by 
\begin{align*}
     H^{k}\left(0,T;X\right)
  := \left\{ f:[0,T]\to X \,:\, \partial_t^{j} f \in L^2(0,T;X)
        \text{ for all } j=0,1,\dots,k\right\} 
\end{align*}
and it  is a Banach space with respect to the  norm given by 
\begin{align*}
      \|f\|_{H^{k}(0,T;X)}
  := \left(\sum_{j=0}^{k} \|\partial_t^{j}f\|_{L^2(0,T;X)}^{2}\right)^{1/2}.
\end{align*} 
 Following \cite[Chapter~1, Theorem~2.3]{lions1972nonhomogeneous1}, the following  continuous embedding
\begin{align*}
H^{k}(\Omega_T)\hookrightarrow H^{k}\!\left(0,T;L^{2}(\Omega)\right)\cap L^{2}\!\left(0,T;H^{k}(\Omega)\right),\ 
\end{align*}
holds and an equivalent norm on $H^{k}(\Omega_T)$ is given by
\begin{align*}
    \|u\|_{H^{k}(\Omega_T)} 
    = \|u\|_{H^{k}\!\left(0,T;L^{2}(\Omega)\right)}
    + \|u\|_{L^{2}\!\left(0,T;H^{k}(\Omega)\right)} .
\end{align*} 
Moreover, whenever the domain is time-dependent, we denote
\begin{align*}
     H^{k}_{0}(\Omega_T)
  := \left\{ f\in H^{m}(\Omega_T)\;:\;
            \partial_t^j f\big|_{t=0} \in H_{0}^{k-j}(\Omega)
            \ \text{for } j=0,\dots,k-1 \right\},
\end{align*}
and the Sobolev space at the lateral boundary $\Sigma:=(0,T)\times \partial\Omega$,  is given by
\begin{align*}
     H^{k}(\Sigma) := H^{k}\left(0,T;L^{2}(\partial\Omega)\right) \cap L^{2}\left(0,T;H^{k}(\partial\Omega)\right).
 \end{align*}
We also define the space $H^{m}_{0}\left(0,T;H^{k}(\partial\Omega)\right)$ by 
\begin{align*}
     H^{m}_{0}\left(0,T;H^{k}(\partial\Omega)\right)
  := \{ f\in H^{m}(0,T;H^{k}(\partial\Omega)): \partial_t^{j} f\big|_{t=0}=0 \text{ in } H^{k}(\partial\Omega)
      \ \text{for } j=0,\dots,m-1\}.
\end{align*}
Next, for integers $m\ge k\ge 0$, define the space
\begin{align*}
     C^{k}\left([0,T];H^{m-k}(\Omega)\right)
  :=\left\{\,u:[0,T]\to H^{m-k}(\Omega)\;:\;
           \partial_t^j u\in C\left([0,T];H^{m-k}(\Omega)\right)\ \text{for } j=0,\dots,k\right\},
\end{align*}
\begin{align*}
     \|u\|_{C^{k}\left([0,T];H^{m-k}\right)}
  := \sum_{j=0}^{k}\ \sup_{t\in[0,T]}\,\|\partial_t^{\,j}u(t)\|_{H^{m-k}(\Omega)},
\end{align*} 
and the energy space $\mathcal{E}_m$  of order $m$  is defined by
\begin{align*}
    \mathcal{E}_m := \bigcap_{k=0}^m C^k\left([0,T];H^{m-k}(\Omega)\right).
\end{align*}
Now using a standard result on time-dependent Sobolev spaces (see for example \cite{evans2022partial}), we obtain  that the energy space $\mathcal{E}_m$ of order $m$ is a 
Banach space with respect to the norm $\lVert \cdot\rVert_{\mathcal{E}_m}$, given by 
\begin{align}\label{norm-Em}
    \|u\|_{\mathcal{E}_m}^2 
    := \sup_{0 \le t \le T} 
    \sum_{k=0}^{m} 
    \bigl\| \partial_t^{\,k} u(t) \bigr\|_{H^{\,m-k}(\Omega)}^{2}.
\end{align}
Moreover, for $m>n+1$, the Sobolev embedding implies that $\mathcal{E}_m$ becomes an algebra; see \cite[Definition~3.5]{choquet2009general}. Moreover, it is continuously multiplicative in the sense that there exists $C_m>0$ (constant depending on $m$ only) such that
\begin{align}\label{eq:Banach_algebra}
\|\Phi\Psi\|_{\mathcal{E}_m}\le C_m\|\Phi\|_{\mathcal{E}_m}\,\|\Psi\|_{\mathcal{E}_m},\quad \text{for all }\,\,\Phi,\Psi\in \mathcal{E}_m.
\end{align} 
 Furthermore, for a positive integer $m$, we define the boundary trace class space $\mathcal{K}_{m+1}$, compatible with $H^{m}$ on $\Sigma$ by 
\begin{align}\label{eq; trace class}
    \mathcal{K}_{m+1} = \left\{ f\in H^{m+1}(\Sigma)\;:\; f\in H_0^{m+1-k}(0,T; H^k(\partial\Omega)), \, \text{for} \, \, k= 0,1,\dots, m  \right\},
\end{align}
and for a  $\delta>0$, sufficiently small, we define a set  $ \mathcal{D}^\delta_{m+1}$ by 
\begin{align}\label{E-delta}
    \mathcal{D}^\delta_{m+1} := \left \{ f \in \mathcal{K}_{m+1} \;:\; \|f\|_{H^{m+1}(\Sigma)}< \delta\right\}.
\end{align}
\subsection{Well-posedness of the semi-linear wave Equation}\label{well-posedness}
In this subsection, we establish the well-posedness of the semi-linear wave Equation described by IBVP \eqref{equation; IBVP}. We start with recalling a well-posedness result for an  IBVP for an  inhomogeneous linear wave Equation with potential given by 
\begin{align}\label{eq:wave-b}
\begin{cases}
\Box v(t,x) + b(x)\,v(t,x) = F(t,x), & (t,x)\in \Omega_T,\\
v(t,x) = f(t,x), & (t,x)\in \Sigma,\\
v(0,x)=\varphi(x),\quad \partial_t v(0,x)=\psi(x), & x\in \Omega.
\end{cases}
\end{align}
\begin{lemma}[Well-posedness for linear Equations with potential (\cite{lin2024determining} Lemma 3.2)]\label{lemma:potential}
{\it Let $m>n+1$ and $T>0$. Now if  $\varphi\in H^{m+1}_0(\Omega)$, $\psi\in H^{m}_0(\Omega)$, $f \in \mathcal{K}_{m+1}$,  $b\in C_c^{\infty}(\Omega)$, and $F\in \mathcal{E}_m$ with $\partial_t^{\,k} F(0)\in H^{m-k}_0(\Omega)$ for $k=0,1,\dots,m-2$, then IBVP \eqref{eq:wave-b} admits a unique solution $\displaystyle  v\in \mathcal{E}_{m+1}$ such that  $\displaystyle \partial_\nu v\in H^{m}(\Sigma)$ and 
\begin{align}\label{energy estimate-v}
\begin{aligned}
   &\|v\|_{\mathcal{E}_{m+1}} + \|\partial_\nu v\|_{H^m(\Sigma)}\\
   &\quad \le C\,e^{CT}\left(\sum_{k=0}^{m}\|\partial_t^{\,k} F\|_{L^1(0,T;H^{m-k}(\Omega))}
+ \|\varphi\|_{H^{m+1}(\Omega)} + \|\psi\|_{H^{m}(\Omega)} + \|f\|_{H^{m+1}(\Sigma)}\right)
\end{aligned}
\end{align}
where $C>0$,  depends only on   $\Omega,$ and $b$.} 
\end{lemma}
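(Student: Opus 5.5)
This is a standard well-posedness result for the linear wave equation with potential, cited from \cite{lin2024determining}. I would prove it in three steps: remove the boundary data by a lifting, solve the resulting homogeneous-boundary problem by a Galerkin or semigroup argument, and then bootstrap regularity by differentiating in time.

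\textbf{Step 1: lifting the boundary data.} Using the definition of $\mathcal{K}_{m+1}$, i.e.\ $f\in H_0^{m+1-k}(0,T;H^k(\partial\Omega))$ for each $k$, I would build an extension $E f$ into $\Omega_T$ such that $Ef|_\Sigma=f$ and $\|Ef\|_{\mathcal{E}_{m+1}}\le C\|f\|_{H^{m+1}(\Sigma)}$. One way is to take a right inverse of the trace $H^{k+1/2}(\Omega)\to H^{k}(\partial\Omega)$ (or a harmonic-type extension) and apply it for each fixed $t$. Because $f$ and its time derivatives vanish at $t=0$, so do $Ef$ and $\partial_t^jEf$. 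Writing $v=w+Ef$ reduces the problem to one for $w$ with zero Dirichlet data, source $\tilde F=F-(\Box+b)Ef$, and the same initial data. The compatibility conditions $\partial_t^k\tilde F(0)\in H_0^{m-k}$ then follow from those assumed on $F$ and the vanishing of $Ef$ at $t=0$. The main cost here is a loss of derivatives: $(\Box+b)Ef$ only lies in $L^1(0,T;H^{m-1})$-type spaces. I therefore expect to need a more careful lifting, for instance solving a wave equation in a collar neighborhood of the boundary, or the Lasiecka--Lions--Triggiani sharp trace theory. \emph{This is the main obstacle}, and in particular it is where the hidden regularity $\partial_\nu v\in H^m(\Sigma)$ must come from.

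\textbf{Step 2: the base energy estimate.} For the zero-boundary problem I would first treat $m=0$. Multiplying by $\partial_t w$ and integrating gives
\[
\tfrac{d}{dt}\tfrac12\bigl(\|\partial_t w\|^2+\|\nabla w\|^2\bigr)\le \|b\|_\infty\|w\|\|\partial_tw\|+\|\tilde F\|\|\partial_t w\|,
\]
and Gronwall yields the $\mathcal{E}_1$ bound with the factor $e^{CT}$. Existence follows by Galerkin approximation in Dirichlet eigenfunctions, and uniqueness from the same estimate.

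\textbf{Step 3: higher regularity and the normal derivative.} Differentiating the equation $k$ times in $t$, which is legitimate because the compatibility conditions give admissible data $\partial_t^k w(0)$, yields the same estimate for $\partial_t^k w$. Spatial regularity is then recovered from elliptic regularity applied to $-\Delta w=\tilde F-\partial_t^2w-bw$ with $w|_{\partial\Omega}=0$, giving $w\in\mathcal{E}_{m+1}$. For $\partial_\nu v$ I would use the Rellich multiplier $h\cdot\nabla \bar v$, with $h$ a smooth extension of $\nu$, applied to the equation and to its tangential and time derivatives. This gives $\|\partial_\nu v\|_{H^m(\Sigma)}$ controlled by the energy norms, the source, and $\|f\|_{H^{m+1}(\Sigma)}$. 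Summing these estimates gives \eqref{energy estimate-v}.
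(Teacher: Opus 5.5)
\textbf{Gap: the lifting in Step~1.} The paper does not prove this lemma; it imports it from \cite{lin2024determining}, and it ultimately comes from sharp regularity results of Lasiecka--Lions--Triggiani type. As a self-contained argument, your proposal has a genuine gap, exactly where you flag it. You correctly see that the reduction $v=w+Ef$ loses regularity, but you leave this unresolved, and without it Steps~2--3 never give $v\in\mathcal{E}_{m+1}$. To get $\partial_t^{m+1}w\in C([0,T];L^2(\Omega))$ you need $\partial_t^m\tilde F\in L^1(0,T;L^2(\Omega))$. For your pointwise-in-$t$ lifting this contains $\partial_t^{m+2}(Ef)=E(\partial_t^{m+2}f)$, while $f\in\mathcal{K}_{m+1}$ has only $m+1$ time derivatives. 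A space--time right inverse of the trace still loses half a derivative: $Ef\in H^{m+3/2}(\Omega_T)$, so $(\Box+b)Ef\in H^{m-1/2}(\Omega_T)$ only. Nothing later recovers this loss. Your suggested repairs do not close the gap either. A lifting obtained by solving a wave equation near $\partial\Omega$ is essentially the statement being proved, short of a full Kreiss--Sakamoto half-space analysis. Invoking Lasiecka--Lions--Triggiani amounts to citing the lemma, after which Steps~2--3 add nothing. Likewise, Step~3 uses the Rellich multiplier only once $v\in\mathcal{E}_{m+1}$ is known, and your route never establishes that.

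\textbf{Missing idea: apply the Rellich multiplier to $v$ itself, coupled with the energy identity.} Do this instead of first passing to zero boundary data. For smooth compatible data, set $e(t)=\tfrac12\int_\Omega(|\partial_t v|^2+|\nabla v|^2)\,dx$. Multiplying $\Box v+bv=F$ by $\partial_t v$ gives
\[
e(t)=e(0)+\int_0^t\!\!\int_{\partial\Omega}\partial_\nu v\,\partial_t f\,dS_x\,ds+\int_0^t\!\!\int_\Omega(F-bv)\,\partial_t v\,dx\,ds,
\]
whose boundary term involves the unknown $\partial_\nu v$. Now multiply instead by $h\cdot\nabla v$ with $h|_{\partial\Omega}=\nu$, and use $v=f$ on $\Sigma$. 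The boundary integrand reduces to $\tfrac12|\partial_\nu v|^2$ (up to sign) plus terms quadratic in $\partial_t f$ and $\nabla_{\partial\Omega}f$. Hence $\|\partial_\nu v\|_{L^2(\Sigma)}^2\le C_T\bigl(\sup_t e(t)+\sup_t\|v(t)\|_{L^2(\Omega)}^2+\|F\|_{L^1(0,T;L^2(\Omega))}^2+\|f\|_{H^1(\Sigma)}^2\bigr)$. Insert this into the energy identity via $\bigl|\int_\Sigma\partial_\nu v\,\partial_t f\bigr|\le\varepsilon\|\partial_\nu v\|_{L^2(\Sigma)}^2+C_\varepsilon\|f\|_{H^1(\Sigma)}^2$, absorb for small $\varepsilon$, and apply Gronwall. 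This gives the $\mathcal{E}_1$ bound and $\partial_\nu v\in L^2(\Sigma)$ at the same time, with no loss.

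The rest of the argument then goes through. Apply this estimate to $\partial_t^k v$ for $k\le m$, with boundary data $\partial_t^kf\in H^1(\Sigma)$ and initial data supplied by the compatibility conditions. Recover spatial derivatives by elliptic regularity for $\Delta\partial_t^kv(t)$, with Dirichlet data $\partial_t^kf(t)\in H^{m+1/2-k}(\partial\Omega)$ (trace in time of $H^{m+1}(\Sigma)$). Obtain the tangential part of $\|\partial_\nu v\|_{H^m(\Sigma)}$ by applying the same estimate to $Z^\alpha v$ for vector fields $Z$ tangent to $\partial\Omega$; the commutators $[\Box,Z^\alpha]v$ are controlled by $\|v\|_{\mathcal{E}_{m+1}}$. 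Existence follows by approximation, since for smooth compatible data a crude lifting is harmless.
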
 
Next, in   Lemma \ref{lem:potential and damping}, stated below,  we establish a well-posedness result for an  IBVP related to  a   linear wave Equation, given by 
\begin{align}\label{eq:wave-ab}
\begin{cases}
\Box v(t,x) + a(x) \,\partial_t v(t,x) + b(x)\,v(t,x) = F _1(t,x),& (t,x)\in \Omega_T,\\
v(t,x) = f(t,x), & (t,x)\in \Sigma,\\
v(0, x )=0,\quad \partial_{t}v(0, x )=0, & \quad x\in \Omega. 
\end{cases}
\end{align}
\begin{lemma}[Well-posedness for linear Equations with potential and damping terms]\label{lem:potential and damping}
{\it Let $m>n+1$ and $T>0$. Assume that $f \in \mathcal{K}_{m+1}$,  $a,b\in C_c^{\infty}(\Omega)$, and $F_{1}\in \mathcal{E}_{m}$ with  $\partial_t^{\,k} F_{1}(0)\in H^{m-k}_0(\Omega)$ for $k=0,1,\dots,m-1$, then the IBVP \eqref{eq:wave-ab}
admits a unique solution  
$\displaystyle 
    v\in \mathcal{E}_{m+1}$ such that   $\displaystyle \partial_\nu v\in H^{m}(\Sigma)$ 
and satisfies the following estimate 
\begin{align}\label{estimate;potential and damping terms}
   \|v\|_{\mathcal{E}_{m+1}} + \|\partial_\nu v\|_{H^{m}(\Sigma)}
\le C\,e^{CT}\left(\sum_{k=0}^{m}\|\partial_t^{\,k} F_1\|_{L^1(0,T;H^{m-k}(\Omega))}
+  \|f\|_{H^{m+1}(\Sigma)}\right)
\end{align} 
for some constant  $C>0$, depending  only on   $\Omega,a$ and $b$.}
\end{lemma}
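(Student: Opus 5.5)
We prove Lemma \ref{lem:potential and damping} by treating the damping term $a\,\partial_t v$ as a source. The plan is a Picard (contraction) argument on top of Lemma \ref{lemma:potential}, with exponential weights in time chosen to absorb the loss coming from $a\partial_t v$. For a given $w\in\mathcal{E}_{m+1}$ with $w(0)=\partial_tw(0)=0$ and suitable vanishing of higher time derivatives at $t=0$, let $v=\mathcal{S}(w)$ be the solution of
\[
\Box v+bv=F_1-a\,\partial_t w \ \text{ in }\Omega_T,\qquad v|_\Sigma=f,\qquad v(0)=\partial_tv(0)=0 .
\]
Since $a\in C_c^\infty(\Omega)$ and $\partial_t w\in\mathcal{E}_m$, the product $a\partial_t w$ lies in $\mathcal{E}_m$. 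It is compactly supported in $x$, so its traces $\partial_t^k(a\partial_tw)(0)$ belong to $H_0^{m-k}(\Omega)$ automatically. Therefore the hypotheses of Lemma \ref{lemma:potential} hold with zero initial data, and $\mathcal{S}$ maps this closed subspace of $\mathcal{E}_{m+1}$ into itself.

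\textbf{Step 1 (compatibility).} Check that the compatibility requirement on the source in Lemma \ref{lemma:potential} (indices $k\le m-2$) is implied by our hypothesis on $F_1$ (indices $k\le m-1$) together with the structure of $a\partial_t w$. Computing $\partial_t^k v(0)$ recursively from the equation shows that the fixed-point class is preserved: all initial time derivatives are built from $\partial_t^jF_1(0)$ and compactly supported multiples of earlier ones, hence lie in $H_0$. I expect this bookkeeping to be routine.

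\textbf{Step 2 (contraction).} For two inputs $w_1,w_2$, the difference $\mathcal{S}(w_1)-\mathcal{S}(w_2)$ solves the problem with zero boundary and initial data and source $-a\,\partial_t(w_1-w_2)$. Applied on $[0,t]$, estimate \eqref{energy estimate-v} gives
\[
\|\mathcal{S}w_1-\mathcal{S}w_2\|_{\mathcal{E}_{m+1}(0,t)}\le Ce^{CT}\sum_{k=0}^m\int_0^t\|\partial_t^{k}(a\partial_t(w_1-w_2))(s)\|_{H^{m-k}}\,ds\le C'\int_0^t\|w_1-w_2\|_{\mathcal{E}_{m+1}(0,s)}\,ds,
\]
because $\partial_t^{k+1}(w_1-w_2)(s)\in H^{m-k}$ is controlled by the $\mathcal{E}_{m+1}$ norm. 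Iterating this Volterra-type bound gives $\|\mathcal{S}^N w_1-\mathcal{S}^N w_2\|\le \frac{(C'T)^N}{N!}\|w_1-w_2\|$, so some power of $\mathcal{S}$ is a contraction. Equivalently, one can use the weighted norm $\sup_t e^{-\lambda t}\|\cdot\|_{\mathcal{E}_{m+1}(0,t)}$ with $\lambda$ large. Banach's fixed point theorem then yields a unique fixed point $v$, which is the solution of \eqref{eq:wave-ab}. Uniqueness within $\mathcal{E}_{m+1}$ follows from the same inequality via Gronwall.

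\textbf{Step 3 (estimate).} Applying \eqref{energy estimate-v} to the fixed point gives
\[
\phi(t):=\|v\|_{\mathcal{E}_{m+1}(0,t)}\le Ce^{CT}\Big(\sum_k\|\partial_t^kF_1\|_{L^1(0,T;H^{m-k})}+\|f\|_{H^{m+1}(\Sigma)}\Big)+C'\int_0^t\phi(s)\,ds .
\]
Gronwall's inequality bounds $\phi(T)$ by the bracket times $Ce^{C''T}$. Inserting this bound back into \eqref{energy estimate-v} controls $\|\partial_\nu v\|_{H^m(\Sigma)}$, which gives \eqref{estimate;potential and damping terms} after renaming constants.

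The main obstacle I anticipate is the regularity count in Step 2. The source $a\partial_t w$ uses one time derivative of $w$, and we must verify that $\sum_k\|\partial_t^k(a\partial_tw)\|_{H^{m-k}}$ is controlled by $\|w\|_{\mathcal{E}_{m+1}}$ with no loss. This holds because $\partial_t^{k+1}w\in C([0,T];H^{m+1-(k+1)})$, exactly the needed space. Care is also needed to apply Lemma \ref{lemma:potential} on subintervals $[0,t]$, which is justified by causality/uniqueness for that problem.
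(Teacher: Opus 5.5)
Your existence and uniqueness argument is correct, but it follows a different route from the paper. The paper proves that $v\mapsto w$ is a contraction only on a short interval $[0,T_0]$, with $T_0$ fixed by $CC_mT_0e^{CT_0}\|a\|_{H^m(\Omega)}<1$. It then continues the solution on overlapping windows $[T_0/2,3T_0/2],\dots$, restarting Lemma~\ref{lemma:potential} with initial data from the previous window and gluing by uniqueness. You instead exploit the Volterra structure of the difference bound, so that $\mathcal{S}^N$ (or $\mathcal{S}$ in a Bielecki-weighted norm) is a contraction on all of $[0,T]$ at once.

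This has a real advantage: you only ever invoke Lemma~\ref{lemma:potential} from $t=0$ with zero initial data, where its hypotheses hold. Your observation that $a\,\partial_t w$ is compactly supported in $x$ also makes the source compatibility automatic for every $w\in\mathcal{E}_{m+1}$, so the restricted class in your Step 1 is not even needed. The paper's continuation, by contrast, feeds $w(T_0/2)$, $\partial_t w(T_0/2)$ into Lemma~\ref{lemma:potential}. These need not lie in $H^{m+1}_0(\Omega)$, $H^m_0(\Omega)$, because $w(T_0/2)|_{\partial\Omega}=f(T_0/2)$.

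What the paper's time-stepping does buy, and where your Step 3 fails as written, is the form of the constant. Your $C'$ is $c_a\,Ce^{CT}$, with $c_a$ the multiplier norm of $a$. Gronwall therefore yields $\|v\|_{\mathcal{E}_{m+1}}\le Ce^{CT}\exp(c_aCTe^{CT})$ times the data. This is a double exponential in $T$, not $Ce^{C''T}$ with $C''$ depending only on $\Omega,a,b$. Using Lemma~\ref{lemma:potential} on $[0,t]$ only replaces $e^{CT}$ by $e^{Ct}$, and the double exponential persists. The reason is that the lemma multiplies the whole $L^1$ norm of the source by $e^{Ct}$ instead of giving a convolution-type bound. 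So you have proved the estimate with some constant $C_T$. That is enough for every later use in the paper, where $T$ is fixed, but it is not the stated dependence on $T$.

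To recover $Ce^{CT}$, run Gronwall only on windows of a fixed length $h$ independent of $T$, and iterate. For $t_0\ge h$, apply your argument to $\chi(t)v$, where $\chi$ vanishes for $t\le t_0-h$ and $\chi\equiv1$ for $t\ge t_0$. Since the coefficients are time-independent you may shift time, and this stays within the zero-initial-data hypothesis of Lemma~\ref{lemma:potential}. The extra source $2\chi'\partial_tv+(\chi''+a\chi')v$ is supported in $[t_0-h,t_0]$ and is controlled by $\|v\|_{\mathcal{E}_{m+1}(0,t_0)}$. Each window therefore costs a fixed factor $K$, and $\lceil T/h\rceil$ windows give a bound of the form $K^{T/h+1}$, i.e.\ $Ce^{C''T}$.
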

\begin{proof}
Using the Banach fixed-point Theorem, we first prove the local existence of a solution to the IBVP \eqref{eq:wave-ab}. Next, in order to obtain the global solution, we use the structure of the Equation to iteratively extend the interval of local existence to the entire time interval. Hence, in the subsequent analysis, we divide the proof into two steps. In the first step, we prove local existence, and in the next step, we prove global existence from the local existence.

\begin{step} \label{step:1}
We first introduce the space $\mathcal{F}_{[0,T]}\subset \mathcal{E}_{m+1}$ by 
\[\mathcal{F}_{[0,T]}:=\left\{\, v\in \mathcal{E}_{m+1} \;:\; \partial_t^k v(0)\in H^{m+1-k}_0(\Omega)\ \text{ for } k=0,1,\dots,m \,\right\}\]  and equip  with the norm given by  
\begin{equation}\label{norm-k}
    \|v\|_{\mathcal{F}_{[0,T]}}:= \|v\|_{\mathcal{E}_{m+1}}+ \sum_{k=0}^{m} \left\|\partial_t^k v\right\|_{C([0,T];H^{m+1-k}(\Omega))}. 
\end{equation}
Then it can be shown  that the space  $\mathcal{F}_{[0,T]}$ defined above is a closed subspace of $\mathcal{E}_{m+1}$ and the   norms   $\lVert \cdot\rVert_{\mathcal{F}_{[0,T]}}$ defined in \eqref{norm-k} and the norm $\lVert \cdot \rVert_{\mathcal{E}_{m+1}}$  on $\mathcal{E}_{m+1}$ defined in \eqref{norm-Em}  are equivalent on  $\mathcal{F}_{[0,T]}$, hence in view of this,  we  use the norm $\lVert\cdot\rVert_{\mathcal{E}_{m+1}}$ on $\mathcal{F}_{[0,T]}$ instead of the norm given in  Equation \eqref{norm-k}. Now for $v\in \mathcal{F}_{[0,T]}$, we have $F_1-a\, \partial_t v \in \mathcal{E}_{m}$ and 
$\partial_t^k\left(F_1-a\, \partial_t v\right)(0)\in H^{m-k}_0(\Omega)$ whenever  $k=0,1, \dots ,m-1$, therefore using the  Lemma \ref{lemma:potential}, we obtain that  the following IBVP
 \begin{align}\label{eq:w}
\begin{cases}
\Box w(t,x) + b(x)\,w(t,x)= F_1(t,x) - a(x)\,\partial_tv(t,x), &(t,x)\in \Omega_T,\\
w(t,x) = f(t,x), &(t,x)\in \Sigma,\\
w(0, x )=0,\quad \partial_{t}w(0, x )=0, &\quad x \in \Omega,
\end{cases}
\end{align}
has a unique solution $
  w\in \mathcal{E}_{m+1}$ such that $ \partial_\nu w\in H^{m}(\Sigma)$ and  it satisfies the following estimate 
\begin{align}\label{eq:est-w}
\quad\|w\|_{\mathcal{E}_{m+1}}+\|\partial_\nu w\|_{H^{m}(\Sigma)}
\leq\;
C e^{CT}\left(
\sum_{k=0}^{m} \big\|\partial_t^k(F_1-a \,\partial_t v)\big\|_{L^1(0,T;H^{m-k}(\Omega))}
+\|f\|_{H^{m+1}(\Sigma)}
\right).
\end{align}
Next, we
define the operator 
\begin{align}\label{map:L}
    \mathcal{T}:\mathcal{F}_{[0,T]}\to \mathcal{E}_{m},\qquad \mathcal{T}(v):=w,
\end{align}
where $w$ solves the IBVP \eqref{eq:w} corresponding to $v\in \mathcal{F}_{[0,T]}$. By Lemma \ref{lemma:potential}, the map \eqref{map:L} is well-defined. Now since $\partial_t^k\left(F_1-a\, \partial_t v\right)(0)\in H^{m-k}_0(\Omega)$ for $k=0,1, \dots ,m-1$, hence the compatibility conditions are satisfied, therefore using Lemma \ref{lemma:potential}, we get that $w\in \mathcal{F}_{[0,T]}$. Hence,  the  map $\mathcal{T}$ can be redefined as 
\begin{align}\label{map:L-eq}
\mathcal{T}:\mathcal{F}_{[0,T]}\to\mathcal{F}_{[0,T]},\qquad \mathcal{T}(v)=w
\end{align}
where $w$ solves the IBVP \eqref{eq:w} corresponding to $v\in \mathcal{F}_{[0,T]}$. Now, in order to prove the local  existence of a solution to the IBVP \eqref{eq:wave-ab}, it is enough to 
show that there exists a $T_{0}>0$, such that  the map $\mathcal{T}$ defined by \eqref{map:L-eq} is a contraction map from $\mathcal{F}_{[0,T_0]}$ to itself and hence it has a fixed point in $\mathcal{F}_{[0,T_0]}$, this gives the local existence of solution to the IBVP \eqref{eq:wave-ab}. 
To prove that the map $\mathcal{T}$ is a contraction, we start with defining  $\mathcal{T}(v_i):=w_i$, for $i=1,2$ for  $v_1,v_2\in \mathcal{F}_{[0,T]}$. 
Now,  observe that $w_{1}-w_{2}$ solves the following IBVP 
\begin{align}\label{eq:w1-w2}
\begin{cases}
\Box \left(w_{1}-w_{2}\right)(t,x) + b(x)\,\left(w_{1}-w_{2}\right)(t,x)=  - a(x)\,\partial_t(v_{1}-v_{2})(t,x), &(t,x)\in \Omega_T,\\
\left(w_{1}-w_{2}\right)(t,x) = 0, &(t,x)\in \Sigma,\\
\left(w_{1}-w_{2}\right)(0, x )=0,\quad \partial_{t}\left(w_{1}-w_{2}\right)(0, x )=0, &\quad x \in \Omega.
\end{cases}
\end{align}
Using the estimate from Equation \eqref{eq:est-w}, we arrive at the following energy estimates for $w_{1}-w_{2}$
\begin{align}\label{eq:estw1-w2}
\quad\|w_{1}-w_{2}\|_{\mathcal{E}_{m+1}}
\leq\;
C e^{CT}
\sum_{k=0}^{m} \big\|\partial_t^k(-a \,\partial_t (v_{1}-v_{2}))\big\|_{L^1(0,T;H^{m-k}(\Omega))}. 
\end{align}
This after using the fact that $\mathcal{T}(v_i):=w_i,$ for $i=1,2$, gives us
\begin{align}
    \|\mathcal{T}(v_{1})-\mathcal{T}(v_{2}) \|_{\mathcal{E}_{m+1}} &\leq\; 
C e^{CT}
\sum_{k=0}^{m} \left\|\partial_{t}^{k}\left( a \,\partial_t (v_{1}-v_{2})\right)\right\|_{L^1(0,T;H^{m-k}(\Omega))}\\
&\leq \ CC_{m}Te^{CT} \lVert a\rVert_{H^{m}(\Omega)}\sum_{k=0}^{m}\left\| \,\partial_t^{k+1} (v_{1}-v_{2})\right\|_{C([0,T];H^{m-k}(\Omega))}\\
& \leq CC_{m}T e^{CT}\|a\|_{H^{m}(\Omega)}\|v_{1}-v_{2}\|_{\mathcal{E}_{m+1}}, 
\end{align}
where we have used the  fact that $\mathcal{E}_{m}$ is an algebra (see \eqref{eq:Banach_algebra}) and the estimate 
$\|f\|_{L^1(0,T;H^{p}(\Omega))}\le T\,\|f\|_{C([0,T];H^{p}(\Omega))}$, which holds for any $f\in L^1(0,T;H^{p}(\Omega))$ and  $p\in \mathbb{N}\cup\{0\}$. Now for $\mathcal{T}$ to be a contraction, we must choose  $T:=T_0$ such that $CC_{m}T_0 e^{CT_0}\|a\|_{H^{m}(\Omega)}<1$. Thus, we obtain that $\mathcal{T}|_{\mathcal{F}_{[0,T_0]}}:=\mathcal{T}_{T_0}$ has a fixed point in $\mathcal{F}_{[0,T_0]}$ where $\mathcal{F}_{[0,T_0]}$ is defined by 
\begin{align*}
    \mathcal{F}_{[0,T_{0}]}=\left\{\, v\in \mathcal{E}_{{m+1},T_{0}}:=  \bigcap_{k=0}^{m+1} C^k\left([0,T_{0}];H^{m+1-k}(\Omega)\right) \;:\; \partial_t^k v(0)\in H^{m+1-k}_0(\Omega)\ \text{ for } k=0,\dots,m \,\right\},
\end{align*}
and it is equipped with a norm given by 
\begin{equation}\label{norm-k-T0}
    \|u\|_{\mathcal{F}_{[0,T_{0}]}}^{2}:= \sup_{0 \le t \le T_{0}} 
    \sum_{k=0}^{m+1} 
    \bigl\| \partial_t^{\,k} u(t) \bigr\|_{H^{\,m+1-k}(\Omega)}^{2}.
    \end{equation}
Now using the fact that  $\mathcal{F}_{[0,T_{0}]}$ is a closed subspace of Banach space $\mathcal{E}_{m+1,T_{0}}$, we  have that  $\mathcal{F}_{[0,T_{0}]}$ is also a Banach space. Hence, using the Banach fixed-point Theorem, we obtain a unique $v\in \mathcal{F}_{[0,T_{0}]}$ such that $\mathcal{T}_{T_{0}}(v)=v.$ Thus, we obtain that for any $F_{1}\in \mathcal{E}_{m}$ with $\partial_t^{\,k} F_{1}(0)\in H^{m-k}_0(\Omega)$, $k=0,1,\dots,m-1$, there exists a unique solution $v\in \mathcal{F}_{[0,T_{0}]}$ to IBVP \eqref{eq:wave-ab}. Moreover, using the estimate from \eqref{eq:est-w}, we have that
\begin{align}
\|v\|_{\mathcal{F}_{[0,T_{0}]}}+\|\partial_\nu v\|_{H^{m}((0,T_{0})\times \partial \Omega)}
\leq\;
C e^{CT_{0}}\left(
\sum_{k=0}^{m} \big\|\partial_t^k(F_1-a \,\partial_t v)\big\|_{L^1(0,T_{0};H^{m-k}(\Omega))}
+\|f\|_{H^{m+1}((0,T_{0})\times \partial \Omega)}
\right)
\end{align}
holds for some constant $C>0$, depending only on $\Omega, a$ and $b$. 
Finally,  using the triangle inequality, we arrive at 
\begin{align}\label{}
&\|v\|_{\mathcal{F}_{[0,T_{0}]}}+\|\partial_\nu v\|_{H^{m}((0,T_{0})\times \partial \Omega)}\\
&\qquad\leq\;
C e^{CT_{0}}\left(
\sum_{k=0}^{m} \big\|\partial_t^k\!F_1 \big\|_{L^1(0,T_{0};H^{m-k}(\Omega))}
+ T_{0}C_{m}\|a\|_{H^{m}(\Omega)}\|v\|_{\mathcal{F}_{[0,T_{0}]}}
+\|f\|_{H^{m+1}((0,T_{0})\times \partial \Omega)}
\right)\\
& \qquad=\;
C e^{CT_{0}}\left(
\sum_{k=0}^{m} \big\|\partial_t^k\!F_1 \big\|_{L^1(0,T_{0};H^{m-k}(\Omega))}
+\|f\|_{H^{m+1}((0,T_{0})\times \partial \Omega)}
\right) + CC_{m}T_{0} e^{CT_{0}}\|a\|_{H^{m}(\Omega)}\|v\|_{\mathcal{F}_{[0,T_{0}]}}\\
& \qquad\leq\;
C e^{CT_{0}}\left(
\sum_{k=0}^{m} \big\|\partial_t^k\!F_1 \big\|_{L^1(0,T_{0};H^{m-k}(\Omega))}
+\|f\|_{H^{m+1}((0,T_{0})\times \partial \Omega)}
\right) \qquad \text{(By the choice of $T_{0}$)}
\end{align} 
for some constant $C>0$, depending only on $a,b$ and $\Omega$. This completes the local existence of the solution to the IBVP \eqref{eq:wave-ab}. In the next step, we establish the global existence of a solution to the IBVP \eqref{eq:wave-ab}.
\end{step} 
\begin{step}\label{step:2} 
    If $T\leq T_{0}$, then there is nothing to prove as the local solution becomes a global solution. Hence, without loss of generality, we can assume that $T_{0} < T$. From Step \ref{step:1}, we have the existence of a solution in $(0,T_{0})\times \Omega$. Next, following the approach used in Step \ref{step:1},  we  extend the solution to the IBVP \eqref{eq:wave-ab} from  $(0,T_0)\times \Omega$ to $ \left(\frac{T_{0}}{2},\frac{3T_{0}}{2}\right)\times \Omega$.
We begin by defining 
\begin{align*}
    \mathcal{F}_{\left[\frac{T_{0}}{2},T\right]}:=\left\{\, v\in  \bigcap_{k=0}^{m+1} C^k\left(\left[\frac{T_{0}}{2},T\right];H^{m+1-k}(\Omega)\right) \;:\; \partial_t^k v\left(\frac{T_{0}}{2} \right)\in H^{m+1-k}_0(\Omega)\ \text{ for } k=0,\dots,m \,\right\}
\end{align*}
with the norm
\begin{equation}\label{norm-k-3T0}
    \|v\|_{\mathcal{F}_{\left[\frac{T_{0}}{2},T\right]}}:=   \left(\sup_{\frac{T_{0}}{2} \le t \le T} 
    \sum_{k=0}^{m+1} 
    \bigl\| \partial_t^{\,k} v(t) \bigr\|_{H^{\,m+1-k}(\Omega)}^{2}\right)^{\frac{1}{2}}. 
\end{equation}
Let $v_{\mathrm{cont.}}\in \mathcal{F}_{\left[\frac{T_{0}}{2},T\right]}$, $\partial_t^{\,k} F_{1}\left(\frac{T_{0}}{2}\right)\in H^{m-k}_0(\Omega)$ for $k=0,1,\dots,m-1$ and $w_{\mathrm{cont.}}$ solve the following IBVP
\begin{align}\label{well-posedness-extw}
\begin{cases}
\partial_t^{2} w_{\mathrm{cont.}} - \Delta w_{\mathrm{cont.}} + b(x) w_{\mathrm{cont.}}
= F_{1}(t,x) - a(x)\partial_t v_{\mathrm{cont.}}(t,x),
& (t,x)\in\left(\frac{T_{0}}{2},T\right)\times\Omega,\\[1ex]
w_{\mathrm{cont.}}(t,x)=f(t,x),
& (t,x)\in\left(\frac{T_{0}}{2},T\right)\times\partial\Omega,\\[1ex]
w_{\mathrm{cont.}}\!\left(\tfrac{T_{0}}{2},x\right)=w\!\left(\tfrac{T_{0}}{2},x\right),\ \ 
\partial_t w_{\mathrm{cont.}}\!\left(\tfrac{T_{0}}{2},x\right)
=\partial_t w\!\left(\tfrac{T_{0}}{2},x\right)
& x\in\Omega .
\end{cases}
\end{align}
where $w$ is the unique solution to the IBVP \eqref{eq:wave-ab} on $(0,T_0)\times \Omega$, established in Step \ref{step:1}. 
 Now using Lemma \ref{lemma:potential}, IBVP \eqref{well-posedness-extw} admits a unique solution. Moreover, it satisfies the following estimate 
\begin{align}
&\|w_{\mathrm{cont.}}\|_{\mathcal{F}_{\left[\frac{T_{0}}{2},T\right]}}
+ \|\partial_\nu w_{\mathrm{cont.}}\|_{H^{m}\!\left(\left(\frac{T_{0}}{2},T\right)\times\partial\Omega\right)} \\
&\quad \le
C\,e^{C\left(T-\frac{T_{0}}{2}\right)}
\Biggl(
\sum_{k=0}^{m}
\bigl\|\partial_t^{\,k}\!\left(F_{1}-a\,\partial_t v_{\mathrm{cont.}}\right)
\bigr\|_{L^{1}\left(\frac{T_{0}}{2},T;H^{m-k}(\Omega)\right)} 
+ \left\|w\!\left(\tfrac{T_{0}}{2}\right)\right\|_{H^{m+1}(\Omega)}\\
&\qquad\qquad\qquad\qquad
+ \left\|\partial_t w\!\left(\tfrac{T_{0}}{2}\right)\right\|_{H^{m}(\Omega)}
+ \|f\|_{H^{m+1}\!\left(\left(\frac{T_{0}}{2},T\right)\times\partial\Omega\right)}
\Biggr).
\end{align}
Next, we define the operator
 \begin{align}\label{map:L-epsilon-ext}
\mathcal{\widetilde{T}}_{\left[\frac{T_{0}}{2},T\right]}:\mathcal{F}_{\left[\frac{T_{0}}{2},T\right]}\to\mathcal{F}_{\left[\frac{T_{0}}{2},T\right]},\qquad \mathcal{\widetilde{T}}(v_{\mathrm{cont.}}):=w_{\mathrm{cont.}},
\end{align}
where $w_{\mathrm{cont.}}$ is the unique solution of \eqref{well-posedness-extw} corresponding to  $v_{\mathrm{cont.}}$. Now, we show that this map is a contraction for $v_{\mathrm{cont.}}\in \mathcal{F}_{\left[\frac{T_{0}}{2},\frac{3T_{0}}{2}\right]} $. Let $v_{1,\mathrm{cont.}},v_{2,\mathrm{cont.}}\in\mathcal{F}_{\left[\frac{T_{0}}{2},T\right]}$, and set $w_{i,\mathrm{cont.}}:=\mathcal{\widetilde{T}}_{\left[\frac{T_{0}}{2},T\right]}(v_{i,\mathrm{cont.}})$, $i=1,2$.
Then, following the analysis used in Step \ref{step:1}, we obtain 
\begin{align}
    &\left\|\mathcal{\widetilde{T}}_{\left[\frac{T_{0}}{2},T\right]}(v_{1,\mathrm{cont.}})-\mathcal{\widetilde{T}}_{\left[\frac{T_{0}}{2},T\right]}(v_{2,\mathrm{cont.}})\right\|_{\mathcal{F}_{\left[\frac{T_{0}}{2},T\right]}}= \left\|w_{1,\mathrm{cont.}}- w_{2,\mathrm{cont.}}\right\|_{\mathcal{F}_{\left[\frac{T_{0}}{2},T\right]}}\\
    &\qquad\;\leq C e^{C\left(T-\frac{T_{0}}{2}\right)}
\sum_{k=0}^{m} \left\|\partial_t^k(-a \,\partial_t (v_{1,\mathrm{cont.}}-v_{2,\mathrm{cont.}}))\right\|_{L^1\left(\frac{T_{0}}{2},T;H^{m-k}(\Omega)\right)}\\
&\qquad\leq\;
CC_{m}\left(T-\frac{T_{0}}{2}\right) e^{C\left(T-\frac{T_{0}}{2}\right)}\left\|a\right\|_{H^{m}(\Omega)}
\sum_{k=0}^{m} \left\| \,\partial_t^{k+1} (v_{1,\mathrm{cont.}}-v_{2,\mathrm{cont.}})\right\|_{C\left(\left[\frac{T_{0}}{2},T\right];H^{m-k}(\Omega)\right)}\\
&\qquad \leq CC_{m}\left(T-\frac{T_{0}}{2}\right) e^{C\left(T-\frac{T_{0}}{2}\right)}\left\|a\right\|_{H^{m}(\Omega)}\left\|v_{1,\mathrm{cont.}}-v_{2,\mathrm{cont.}}\right\|_{\mathcal{F}_{\left[\frac{T_{0}}{2},T\right]}}.
\end{align}
Choosing $T= \dfrac{3T_{0}}{2}$, and recalling from Step \ref{step:1} that
$CC_{m} T_{0} e^{C T_{0}}<1$, it follows that
$\widetilde{\mathcal{T}}_{\left[\frac{T_{0}}{2},\frac{3T_{0}}{2}\right]}$
is a contraction. Then by Banach fixed-point Theorem, there exists a unique solution $w_{\mathrm{cont.}}\in \mathcal K_{\left[\frac{T_{0}}{2},\frac{3T_{0}}{2}\right]}$ satisfying
\begin{align}
&\|w_{\mathrm{cont.}}\|_{\mathcal{F}_{\left[\frac{T_{0}}{2},\frac{3T_{0}}{2}\right]}}
+ \|\partial_\nu w_{\mathrm{cont.}}\|_{H^{m}\!\left(\left(\frac{T_{0}}{2},\frac{3T_{0}}{2}\right)\times\partial\Omega\right)} \\
&\quad \le
C\,e^{C T_{0}}
\Biggl(
\sum_{k=0}^{m}
\|\partial_t^{\,k} F_{1}\|_{L^{1}\!\left(\frac{T_{0}}{2},\frac{3T_{0}}{2};H^{m-k}(\Omega)\right)} 
+ \left\|w\!\left(\tfrac{T_{0}}{2}\right)\right\|_{H^{m+1}(\Omega)}\\
&\qquad\qquad\qquad
+ \left\|\partial_t w\!\left(\tfrac{T_{0}}{2}\right)\right\|_{H^{m}(\Omega)} 
+ \|f\|_{H^{m+1}\!\left(\left(\frac{T_{0}}{2},\frac{3T_{0}}{2}\right)\times\partial\Omega\right)}
\Biggr).
\end{align}
Next, we show that $w_{\mathrm{cont.}}$ agrees with $w$ on the common domain. More precisely, we prove that
\begin{equation*}
w \equiv w_{\mathrm{cont.}} \quad \text{in } \left(\tfrac{T_{0}}{2}, T_{0}\right)\times \Omega .
\end{equation*}
This immediately follows  from the fact that both $w$ and $w_{\mathrm{cont.}}$ satisfy the same IBVP that is given by 
\begin{align}\label{overlapping-eq:w}
\begin{cases}
\Box w^{\ast}(t,x) + b(x)\,w^{\ast}(t,x) + a(x)\,\partial_tw^{\ast}(t,x) = F_1(t,x),  &(t,x)\in \left(\frac{T_{0}}{2},T_{0} \right)\times \Omega,\\
w^{\ast}(t,x) = f(t,x), &(t,x)\in \left(\frac{T_{0}}{2},T_{0} \right)\times \partial\Omega,\\
w^{\ast}\left(\frac{T_{0}}{2}, x \right)=w\left(\frac{T_{0}}{2}, x \right),\quad \partial_{t}w^{\ast}\left(\frac{T_{0}}{2}, x \right)=\partial_{t}w\left(\frac{T_{0}}{2}, x \right), &\quad x \in \Omega.
\end{cases}
\end{align}
Hence, by the uniqueness of solutions to IBVP \eqref{eq:w} and \eqref{well-posedness-extw}, we conclude that $w_{\mathrm{cont.}}$ agrees with $w$ on the common domain.
We now define a function $w_{\mathrm{ext}}$ on
$\left(0,\tfrac{3T_{0}}{2}\right)\times\Omega$ by
\begin{align}
w_{\mathrm{ext}}(t,x)=
\begin{cases}
w(t,x), & (t,x)\in (0,T_{0})\times\Omega,\\
w_{\mathrm{cont.}}(t,x),
& (t,x)\in \left(\tfrac{T_{0}}{2},\tfrac{3T_{0}}{2}\right)\times\Omega .
\end{cases}
\end{align}
By construction, $w_{\mathrm{ext}}$ solves the IBVP
\eqref{eq:wave-ab} in $\left(0,\tfrac{3T_{0}}{2}\right)\times\Omega$.
Repetition of this argument inductively, the solution can be extended to the entire
domain $(0,T)\times\Omega$.

\end{step}
\end{proof}

Building on this,  we are now ready to state the main result of this section, which asserts that, for sufficiently small Dirichlet boundary data $f$, the IBVP \eqref{equation; IBVP} admits a unique solution $u$ which belongs to \begin{align*}
    \mathbb{B}_R(0) := \left\{ u \in \mathcal{E}_{m+1} \;:\; \|u\|_{\mathcal{E}_{m+1}} <R\right\},
\end{align*}
for some $R>0$. More precisely, we prove the following. 
\begin{theorem}[Well-posedness for semi-linear Equations with damping, linear and nonlinear potential]\label{main_thm:welposedness}
{\it Let $m>n+1$, $ M >0$ and $T>0$. Assume that $a, b,q\in C_c^{\infty}(\Omega)$  and  $f \in \mathcal{D}^\delta_{m+1}$, with $\partial_t^k f(0)$ belong to $H_{0}^{m-k}(\Omega)$ for $k=0,1,\dots,m-1$, then the IBVP \eqref{equation; IBVP} admits a unique solution $u\in \mathbb{B}_R(0) $ such that 
\begin{align}
  \|u\|_{\mathcal{E}_{m+1}} + \|\partial_\nu u\|_{H^{m}(\Sigma)}
\le C\,e^{CT} \|f\|_{H^{m+1}(\Sigma)}, 
\end{align}
holds, for some constant $C>0$, depending only on $\Omega, a,b$ and $q$. }
\end{theorem}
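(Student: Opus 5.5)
The plan is a Banach fixed-point argument on a small ball in $\mathcal{E}_{m+1}$, treating the nonlinear term $q u^{\ell}$ as a source term for the linear problem \eqref{eq:wave-ab}. For $v \in \mathbb{B}_R(0)\cap \mathcal{F}_{[0,T]}$ (with $\mathcal{F}_{[0,T]}$ as in the proof of Lemma \ref{lem:potential and damping}), we let $w=\mathcal{S}(v)$ be the solution of
\begin{align*}
\begin{cases}
\Box w + a\,\partial_t w + b\, w = -q\, v^{\ell}, & \text{in } \Omega_T,\\
w = f, & \text{on } \Sigma,\\
w(0,\cdot)=0,\quad \partial_t w(0,\cdot)=0, & \text{in } \Omega.
\end{cases}
\end{align*}
To apply Lemma \ref{lem:potential and damping} we need $F_1=-qv^{\ell}\in\mathcal{E}_m$ with $\partial_t^k F_1(0)\in H^{m-k}_0(\Omega)$ for $k\le m-1$. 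Since $m>n+1$, the algebra property \eqref{eq:Banach_algebra} and the embedding $\mathcal{E}_{m+1}\hookrightarrow\mathcal{E}_m$ give $\|qv^{\ell}\|_{\mathcal{E}_m}\le C_m^{\ell}\|q\|_{H^m}\|v\|_{\mathcal{E}_{m+1}}^{\ell}$. The compatibility conditions are also easy. By Leibniz, $\partial_t^k(v^\ell)(0)$ is a sum of products of $\partial_t^j v(0)$, and each factor lies in $H^{m+1-j}_0(\Omega)$. Alternatively, since $q\in C_c^\infty(\Omega)$, every such term is compactly supported in $\Omega$ and hence lies in $H_0^{m-k}(\Omega)$. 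Thus $\mathcal{S}$ is well defined, and Lemma \ref{lem:potential and damping} gives $w\in\mathcal{F}_{[0,T]}$.

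We then verify the two fixed-point hypotheses. \emph{Self-mapping:} by \eqref{estimate;potential and damping terms} and $\|g\|_{L^1(0,T;X)}\le T\|g\|_{C([0,T];X)}$,
\[
\|w\|_{\mathcal{E}_{m+1}}\le Ce^{CT}\big(T C_m^{\ell}\|q\|_{H^m}R^{\ell}+\delta\big).
\]
We choose $R=2Ce^{CT}\delta$, and then $\delta$ so small that $Ce^{CT}TC_m^\ell\|q\|_{H^m}R^{\ell}\le R/2$. This is possible because $\ell\ge2$: the condition becomes $R^{\ell-1}\lesssim 1$, which holds for small $\delta$. \emph{Contraction:} the difference $\mathcal{S}(v_1)-\mathcal{S}(v_2)$ solves the same problem with zero data and source $-q(v_1^\ell-v_2^\ell)=-q(v_1-v_2)\sum_{j=0}^{\ell-1}v_1^{j}v_2^{\ell-1-j}$. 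Hence
\[
\|\mathcal{S}(v_1)-\mathcal{S}(v_2)\|_{\mathcal{E}_{m+1}}\le Ce^{CT}T\,\ell\, C_m^{\ell}\|q\|_{H^m}R^{\ell-1}\|v_1-v_2\|_{\mathcal{E}_{m+1}},
\]
and shrinking $\delta$ (hence $R$) makes the constant less than $1/2$. Since $\overline{\mathbb{B}_R(0)}\cap\mathcal{F}_{[0,T]}$ is closed in a Banach space, we obtain a unique fixed point $u$, which solves \eqref{equation; IBVP}. Unlike Lemma \ref{lem:potential and damping}, no time-stepping is needed, because smallness comes from $R$ rather than $T$.

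For the estimate, we apply \eqref{estimate;potential and damping terms} to $u$ itself:
\[
\|u\|_{\mathcal{E}_{m+1}}+\|\partial_\nu u\|_{H^m(\Sigma)}\le Ce^{CT}\big(TC_m^\ell\|q\|_{H^m}R^{\ell-1}\|u\|_{\mathcal{E}_{m+1}}+\|f\|_{H^{m+1}(\Sigma)}\big).
\]
With the choice of $\delta$ above, the first term on the right is at most $\tfrac12\|u\|_{\mathcal{E}_{m+1}}$ and can be absorbed into the left side, which yields the claim after enlarging $C$.

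The main obstacle I expect is bookkeeping rather than analysis. First, one must check that the fixed-point set is the right one, so that uniqueness is asserted only in $\mathbb{B}_R(0)$ and the compatibility conditions propagate through the nonlinearity. Second, the dependence of $C$ is delicate: $C$ will actually depend on $T$, $m$ and $\ell$ as well as on $\Omega,a,b,q$, and $\delta$ must be chosen depending on these. I would state explicitly that $\delta$ in $\mathcal{D}^\delta_{m+1}$ is fixed small depending on $T$ and on $\|q\|_{H^m}$.
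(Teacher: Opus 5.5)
Your proposal is correct and follows essentially the same route as the paper. You run a Banach fixed-point iteration of the linear solution operator from Lemma \ref{lem:potential and damping} with source $-q v^{\ell}$ on a small ball carrying the compatibility conditions, get self-mapping and contraction from the algebra property of $\mathcal{E}_m$ and the factorization of $v_1^{\ell}-v_2^{\ell}$, and obtain the final estimate by absorbing the $R^{\ell-1}$-small nonlinear term. Your use of the closed ball, which is complete, and your explicit tracking of the factor $T$ and of the constants' dependence are slightly cleaner bookkeeping than the paper's open-ball set $\mathcal{D}^{[0,R]}$.
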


\begin{proof}

For $R>0$ (to  be specified later), define the closed subspace $\mathcal{D}^{[0,R]}$ of $\mathcal{E}_{m+1}$ by 
\begin{align*}
\mathcal{D}^{[0,R]}
:=
\Big\{
u \in \mathbb{B}_{R}(0):\ \ 
\partial_t^k u(0) \in H_0^{m-k}(\Omega),\ k=0,1,\dots,m-1
\Big\},
\end{align*}  equipped with the norm of $\mathcal{E}_{m+1}$ as defined in  \eqref{norm-Em}. 
For  $u \in \mathcal{D}^{[0,R]}$, consider the following  IBVP
\begin{align}\label{eq; wave-a_b_F_1_rewrite}
\begin{cases}
\Box v(t,x) + a(x)\partial_t v(t,x) + b(x)v(t,x) = -q(x)u^{\ell}(t,x), & (t,x)\in\Omega_T,\\
v(t,x) = f(t,x), & (t,x)\in\Sigma,\\
v(0,x)=0,\quad \partial_t v(0,x)=0, & x\in\Omega.
\end{cases}
\end{align}
 Now since $u \in \mathcal{D}^{[0,R]}$, therefore using the fact that  $\mathcal{E}_{m}$ is a Banach algebra, we get that  $-qu^{\ell} \in \mathcal{E}_{m} $, and using the Leibniz rule for product rule for Sobolev functions, we get that the compatibility conditions are also satisfied. Thus, in view of this, we can apply Lemma~\ref{lem:potential and damping}, to obtain that the IBVP 
\eqref{eq; wave-a_b_F_1_rewrite} admits a unique solution  
$v \in \mathcal{E}_{m+1}$ such that $ \partial_\nu v \in H^m(\Sigma)$ and  the following estimate    
\begin{align}\label{eq:est-v=u_rewrite}
\|v\|_{\mathcal{E}_{m+1}}+\|\partial_\nu v\|_{H^m(\Sigma)}
\le C e^{CT}
\Bigg(
\sum_{k=0}^m
\|\partial_t^k(q u^{\ell})\|_{L^1(0,T;H^{m-k}(\Omega))}
+\|f\|_{H^{m+1}(\Sigma)}
\Bigg)
\end{align}
holds for some constant $C>0$, depending only $\Omega, a,b$ and $q$. 
Next, in order to prove the well-posedness for \eqref{equation; IBVP}, we define the operator $\mathcal{T}^{sem}:\mathcal{D}^{[0,R]}\longrightarrow \mathcal{E}_{m+1},$ by $\mathcal{T}^{sem}(u):=v$, 
where $v$ is the  solution of the IBVP
\eqref{eq; wave-a_b_F_1_rewrite} corresponding to a given
$u\in\mathcal{D}^{[0,R]}$.
By the estimate \eqref{eq:est-v=u_rewrite}, we obtain
\begin{align}\label{est; L^R}
\|\mathcal{T}^{sem}(u)\|_{\mathcal{E}_{m+1}}
=\|v\|_{\mathcal{E}_{m+1}}
\le
C e^{CT}
\Bigg(
\sum_{k=0}^{m}
\|\partial_t^k(q u^{\ell})\|_{L^1(0,T;H^{m-k}(\Omega))}
+
\|f\|_{H^{m+1}(\Sigma)}
\Bigg).
\end{align}
Now using the fact that $u\in \mathcal{D}^{[0,R]}$ and  $\mathcal{E}_{m}$ is a Banach algebra (see \eqref{eq:Banach_algebra}), we arrive at 
\begin{align}\label{est; qu^2}
\|q u^{\ell}\|_{\mathcal{E}_{m}}
\leq
C_{m}^{\ell}\|q\|_{H^{m}(\Omega)}\,\|u\|_{\mathcal{E}_{m}}^{\ell}
\leq
C_{m}^{\ell}R^{\ell}\,\|q\|_{H^{m}(\Omega)},\quad \mbox{for all $u\in\mathcal{D}^{[0,R]}$.}
\end{align}
Next, applying the estimate \eqref{est; qu^2} and using the fact that $f\in \mathcal{D}^\delta_{m+1}$, we substitute these bounds into \eqref{est; L^R} to obtain 
\begin{align}\label{est; L^R_}
\|\mathcal{T}^{sem}(u)\|_{\mathcal{E}_{m+1}}
\le
C e^{CT}\bigl(\delta + \|q\|_{H^{m}(\Omega)}\, C_{m}^{\ell} R^{\ell}\bigr). 
\end{align}
 Now  to ensures that $\|\mathcal{T}^{sem}(u)\|_{\mathcal{E}_{m+1}}<R$, for all
$u\in\mathcal{D}^{[0,R]}$, we choose $R,\delta>0$, sufficiently small such that
\begin{align}\label{estimate of R}
  R^{\ell-1} < \frac{1}{4CC_{m}^{\ell} e^{CT}\,\|q\|_{H^{m}(\Omega)}},
\quad \mbox{and} \quad 
\delta < \frac{R}{4C e^{CT}}.  
\end{align}
\begin{equation*}
\|\mathcal{T}^{sem}(u)\|_{\mathcal{E}_{m+1}}
\le Ce^{CT}\left(\frac{R}{4C e^{CT}} + \frac{\|q\|_{H^{m}(\Omega)}\,RC_{m}^{\ell}}{4CC_{m}^{\ell}e^{CT}\,\|q\|_{H^{m}(\Omega)}}\right)\le \frac{R}{2}<R.
\end{equation*}
Thus, using the above mentioned choice of $R$,  we get that   $\mathcal{T}^{sem}$ maps
$\mathcal{D}^{[0,R]}$ to itself.
Next, we show that $\mathcal{T}^{sem}:\mathcal{D}^{[0,R]}\rightarrow \mathcal{D}^{[0,R]}$  is a contraction map whenever $R>0$ satisfies the inequality \eqref{estimate of R}. 
To do so, we set $v_i:=\mathcal{T}^{sem}(u_i)$,  for $u_i\in\mathcal{D}^{[0,R]}$, $i=1,2$.   Now  observe that 
 $w:=v_1-v_2$ solves the following IBVP 
\begin{align}\label{eq:v1-v2_rewrite}
\begin{cases}
\Box w(t,x) + a(x)\partial_t w(t,x) + b(x)w(t,x)
= -q(x)(u_1^{\ell}(t,x)-u_2^{\ell}(t,x)), & (t,x)\in\Omega_T,\\
w(t,x)=0, & (t,x)\in\Sigma,\\
w(0,x)=0,\quad \partial_t w(0,x)=0, & x\in\Omega.
\end{cases}
\end{align}
An estimate from Equation \eqref{eq:est-v=u_rewrite}  applied to $w$ along with the following expansion for $u_1^{\ell}-u_2^{\ell}$ \begin{align*}
    u_1^{\ell}-u_2^{\ell} = (u_1-u_2)P_{{\ell}-1}(u_1,u_2), \quad \text{where} \ \ P_{{\ell}-1}(u_1,u_2):= \sum\limits_{k=0}^{{\ell}-1} u_1^{{\ell}-1-k}u_2^{k}, 
\end{align*} yields that  
\begin{align*}
\|w\|_{\mathcal{E}_{m+1}}+\|\partial_\nu w\|_{H^m(\Sigma)}
&\le C e^{CT}
\sum_{k=0}^m
\|\partial_t^k(q(u_1^{\ell}-u_2^{\ell}))\|_{L^1(0,T;H^{m-k}(\Omega))} \\
&\le C C_{m} e^{CT}\|q\|_{H^{m}(\Omega)}\|u_1^{\ell}-u_2^{\ell}\|_{\mathcal{E}_{m+1}} \\
&\le  C C_{m}^{2} e^{CT} \|q\|_{H^{m}(\Omega)}
\|u_1-u_2\|_{\mathcal{E}_{m+1}}\|P_{{\ell}-1}(u_1,u_2)\|_{\mathcal{E}_{m+1}}\\
&\le {\ell} C C_{m}^{\ell}e^{CT} R^{{\ell}-1} \|q\|_{H^{m}(\Omega)}
\|u_1-u_2\|_{\mathcal{E}_{m+1}}, 
\end{align*}
where in the above estimate, we have used the fact that $u_i\in \mathcal{D}^{[0,R]}$ for $i=1,2$. 
Combining the above estimate along with the definition of $\mathcal{T}^{sem}$ and $w$, we arrive at 
\begin{align*}
  \|\mathcal{T}^{sem}(u_1)-\mathcal{T}^{sem}(u_2)\|_{\mathcal{E}_{m+1}}
\le {\ell} CC_{m}^{\ell} e^{CT} R^{{\ell}-1} \|q\|_{H^{m}(\Omega)}
\|u_1-u_2\|_{\mathcal{E}_{m+1}} 
\end{align*}
for some constant $C>0$, depending only on $\Omega, a,b$ and $q$. Next to ensures that the mapping $\mathcal{T}^{sem}:\mathcal{D}^{[0,R]}\rightarrow \mathcal{D}^{[0,R]}$ is a contraction, we choose  $R>0$, such that 
 \begin{align*}
    R^{{\ell}-1}<\min\left\{ \frac{1}{4{\ell}CC_{m}^{\ell}}e^{CT}, \frac{1}{4CC_{m}^{\ell}} e^{CT}\|q\|_{H^{m}(\Omega)}\right\}
\end{align*}
Thus, we have
\begin{align*}
  &\|\mathcal{T}^{sem}(u_1)-\mathcal{T}^{sem}(u_2)\|_{\mathcal{E}_{m+1}}
\le {\ell} CC_{m}^{\ell} e^{CT} R^{{\ell}-1} \|q\|_{H^{m}(\Omega)}
\|u_1-u_2\|_{\mathcal{E}_{m+1}} \\ & \quad  \le {\ell} CC_{m}^{\ell} e^{CT} \frac{1}{4CC_{m}^{\ell}\ell e^{CT}\|q\|_{H^{m}(\Omega)}} \|q\|_{H^{m}(\Omega)}
\|u_1-u_2\|_{\mathcal{E}_{m+1}}\\ & \quad  \le {\ell} CC_{m}^{\ell} e^{CT} \frac{1}{4CC_{m}^{\ell} \ell e^{CT}\|q\|_{H^{m}(\Omega)}} \|q\|_{H^{m}(\Omega)}
\|u_1-u_2\|_{\mathcal{E}_{m+1}}  \le \frac{1}{4}\|u_1-u_2\|_{\mathcal{E}_{m+1}}. 
\end{align*}
 Hence, using the Banach fixed-point Theorem, we get that there exists a unique
$u\in\mathcal{D}^{[0,R]}$ such that $\mathcal{T}^{sem}(u)=u$. Now arguing as earlier, we conclude that the fixed point $u \in \mathcal{E}_{m+1}$ is the unique solution to the IBVP 
\eqref{eq; wave-a_b_F_1_rewrite} such that 
 $\partial_\nu u \in H^m(\Sigma)$ and  satisfies the following estimate
\begin{align}\label{estimate;potential and damping terms_rewrite}
\|u\|_{\mathcal{E}_{m+1}}+\|\partial_\nu u\|_{H^m(\Sigma)}
&\le
C e^{CT}
\left(
\sum_{k=0}^{m}
\big\|\partial_t^k(q u^{\ell})\big\|_{L^1(0,T;H^{m-k}(\Omega))}
+\|f\|_{H^{m+1}(\Sigma)}
\right) \nonumber\\
&\le
C e^{CT}\|q u^{\ell}\|_{\mathcal{E}_{m}}
+ C e^{CT}\|f\|_{H^{m+1}(\Sigma)},
\end{align}
for some constant $C>0$, depending only on $\Omega,a,b$ and $q$. Note that while deriving the above estimate, we used the estimate \eqref{eq:est-v=u_rewrite}. 
Finally,  using the Banach algebra estimate  \eqref{est; qu^2} along with the fact that $u\in\mathcal{D}^{[0,R]}$, in the above estimate,  we get 
\begin{align}\label{est:Banach_q_final}
    \|q u^{\ell}\|_{\mathcal{E}_{m}}
\leq C_{m}^{\ell}
R^{{\ell}-1}\,\|q\|_{H^{m}(\Omega)}\,\|u\|_{\mathcal{E}_{m}}. 
\end{align}
 Now, substituting the upper bound of $R$ and $\delta$ from \eqref{estimate of R} along with the estimate \eqref{est:Banach_q_final}, into
 \eqref{estimate;potential and damping terms_rewrite} to obtain
\begin{align*}
& \|u\|_{\mathcal{E}_{m+1}}+\|\partial_\nu u\|_{H^m(\Sigma)}
\leq
C e^{CT}
\left(
\sum_{k=0}^{m}
\big\|\partial_t^k(q u^{\ell})\big\|_{L^1(0,T;H^{m-k}(\Omega))}
+\|f\|_{H^{m+1}(\Sigma)}
\right)\\
& \quad \leq
C e^{CT}\|q u^{\ell}\|_{\mathcal{E}_{m}}
+ C e^{CT}\|f\|_{H^{m+1}(\Sigma)}\le C e^{CT} C_{m}^{\ell} R^{{\ell}-1}\,\|q\|_{H^{m}(\Omega)}\,\|u\|_{\mathcal{E}_{m}}
+  C e^{CT}\|f\|_{H^{m+1}(\Sigma)}\\
& \quad \le C e^{CT}C_{m}^{\ell} \frac{1}{4CC_{m}^{\ell}} e^{CT}\,\|q\|_{H^{m}(\Omega)}\,\|q\|_{H^{m}(\Omega)}\,\|u\|_{\mathcal{E}_{m}}
+  C e^{CT}\|f\|_{H^{m+1}(\Sigma)}\\
& \quad \le \frac{1}{4}\|u\|_{\mathcal{E}_{m+1}}
+  C e^{CT}\|f\|_{H^{m+1}(\Sigma)}.
\end{align*}
Further simplification gives,
\begin{align*}
\|u\|_{\mathcal{E}_{m+1}}+\|\partial_\nu u\|_{H^m(\Sigma)}
&\le \frac{4C}{3}  e^{CT}\|f\|_{H^{m+1}(\Sigma)}. 
\end{align*}
Since $C>0$ is a generic constant, we absorb the factor $\frac{4}{3}$ into $C$ and still denote the resulting constant by $C$. This concludes the proof of the Theorem.
\end{proof}
\section{Asymptotic solutions}\label{sec: Asymptotic solutions}
In this section, we construct asymptotic solutions that play a key role in
the reconstruction of the nonlinear potential $q$. Since this reconstruction of $q$ also relies on the prior reconstruction of the damping coefficient $a$ and the  linear potential $b$, therefore we first
present two Lemmas that introduce geometric optics (GO) solutions for the
IBVP \eqref{eq:wave-ab} and the corresponding
backward wave problem. These solutions are essential for recovering the
damping coefficient and the linear potential.  We refer to \cite{Isakov1991AnIH} for detailed proof of Lemmas \ref{lemma: geom opt sol v_{1}} and \ref{lemma: geom opt sol v} stated below.
\begin{lemma}{\cite[Lemma~~2]{Isakov1991AnIH}}\label{lemma: geom opt sol v_{1}}
  {\it  Let $\Omega$ be an open, connected, and bounded subset of $\mathbb{R}^n$, $n\geq 2$ with a smooth boundary $\partial\Omega$ and $T>0$. For any $(a,\varphi)\in C_{c}^{\infty}(\Omega)\times   C_{c}^{\infty}(\R^{n})$ such that $\supp(\varphi) \cap\Omega=\emptyset$ and $\omega \in \mathbb{S}^{n-1}$. For any $\tau>0$, there exists a solution of \eqref{eq;Notations of first order Linearization eqn} having the form 
  \begin{align}\label{eq: v_{(1)}}
  \begin{split}
 v_k(t,x) = \varphi(x+t\omega)A^{+}(t,x)\exp(i\tau(x\cdot\omega + t)) + R_{k}(t,x,\tau),
 \end{split}
\end{align}
where 
\begin{align*}
     A^{+}(t,x)=\exp\left(-\frac{1}{2}\int_0^ta(x+s\omega)\,ds\right),
\end{align*}
and $R_{k}(t,x,\tau)$ vanishes at initial time; that is,
\begin{align}\label{R_{1} initia}
   R_{k}(0,x,\tau) = \partial_{t}R_{k}(0,x,\tau) =  0   \text{ in } \Omega.
\end{align}
Moreover, $R_{k}(t,x,\tau)$ satisfies the following estimate
\begin{align}\label{eq: r_{(1)}}
  \tau\|R_{k}\|_{L^{2}(\Omega_{T})}+\|\partial_{t} R_{k}\|_{L^{2}(\Omega_{T})} \leq C \|\varphi\|_{H^{3}{(\R^n})}.
\end{align}}
\end{lemma}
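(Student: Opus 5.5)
We construct the solution by the standard geometric optics ansatz. The equation referred to as \eqref{eq;Notations of first order Linearization eqn} is, in effect, the linear problem $\Box v + a\partial_t v + bv = 0$ in $\Omega_T$ with zero Cauchy data. Set $\phi(t,x) := x\cdot\omega + t$ and $\Phi(t,x) := \varphi(x+t\omega)A^{+}(t,x)e^{i\tau\phi}$, and look for $v_k = \Phi + R_k$, where $R_k$ solves
\begin{align*}
\begin{cases}
\Box R_k + a\partial_t R_k + bR_k = -\left(\Box + a\partial_t + b\right)\Phi, & \text{in } \Omega_T,\\
R_k(0,\cdot)=\partial_t R_k(0,\cdot)=0, & \text{in } \Omega,
\end{cases}
\end{align*}
with boundary value $R_k|_{\Sigma}=0$. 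The Dirichlet data of $v_k$ is then $f = \Phi|_\Sigma$, which is admissible. The support hypothesis $\supp(\varphi)\cap\Omega=\emptyset$ ensures that $\Phi(0,\cdot)=\partial_t\Phi(0,\cdot)=0$ in $\Omega$, so $v_k$ itself has vanishing initial data, and that \eqref{R_{1} initia} holds.

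\textbf{Step 1: the amplitude equation.} Expanding $(\Box + a\partial_t + b)(Ae^{i\tau\phi})$ gives three powers of $\tau$.
\begin{itemize}
\item The $\tau^2$ term is $-\tau^2(\phi_t^2-|\nabla\phi|^2)Ae^{i\tau\phi}$, which vanishes since $|\omega|=1$.
\item The $\tau^1$ term is $i\tau\left(2(\partial_t-\omega\cdot\nabla)A + aA\right)e^{i\tau\phi}$.
\item The $\tau^0$ term is $(\Box A + a\partial_t A + bA)e^{i\tau\phi}$.
\end{itemize}
The transport equation $(\partial_t-\omega\cdot\nabla)A = -\tfrac12 aA$ makes the $\tau^1$ term vanish. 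The factor $\varphi(x+t\omega)$ is annihilated by $\partial_t-\omega\cdot\nabla$. The factor $A^{+}$ solves the transport equation, as one checks by differentiating the integral, using that $\partial_t-\omega\cdot\nabla$ acting on $\int_0^t a(x+s\omega)\,ds$ yields $a(x+t\omega)-\int_0^t\omega\cdot\nabla a(x+s\omega)\,ds$. Here the sign convention needs care; I will re-verify the characteristic direction and, if necessary, adjust the argument to $x-s\omega$ or the sign of $t\omega$ so that both factors are constant along the same characteristics. After this, the right-hand side of the $R_k$ problem is only $F := -(\Box A + a\partial_t A + bA)e^{i\tau\phi}$, with $A=\varphi(x+t\omega)A^+$. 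This gives the bound $\|F\|_{L^2(\Omega_T)}\le C\|\varphi\|_{H^2}$, independent of $\tau$. Moreover $\partial_t F$ is $O(\tau)$ with $\|\partial_t F\|\le C(\tau\|\varphi\|_{H^2}+\|\varphi\|_{H^3})$.

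\textbf{Step 2: energy estimate for $R_k$.} Existence of $R_k$ follows from Lemma \ref{lem:potential and damping}, or from its $L^2$-level analogue. For the estimate, the plain energy inequality gives $\|\partial_t R_k\|_{L^2}\le C\|F\|_{L^1(L^2)}\le C\|\varphi\|_{H^2}$, but it gives no $\tau$ decay for $R_k$ itself. To get $\tau\|R_k\|$ we integrate the oscillation once in time.
\begin{itemize}
\item Write $F = \tau^{-1}\partial_t G + \tau^{-1}H$, with $G := i\,(\Box A + a\partial_t A + bA)e^{i\tau\phi}$ and $H$ bounded by $C\|\varphi\|_{H^3}$.
\item Alternatively, and more simply, set $\tilde R(t) := \int_0^t R_k(s)\,ds$. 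This function solves the same equation (the coefficients are time-independent), with zero Dirichlet data and source $\int_0^t F(s)\,ds$, up to the term $a R_k(0)=0$. The initial data are again zero.
\item Since $e^{i\tau t}$ appears in $F$, integration by parts in $s$ gives $\|\int_0^t F\|_{L^2}\le C\tau^{-1}\|\varphi\|_{H^3}$.
\item The energy estimate for $\tilde R$ then yields $\|\partial_t\tilde R\|_{L^2(\Omega_T)} = \|R_k\|_{L^2(\Omega_T)}\le C\tau^{-1}\|\varphi\|_{H^3}$.
\end{itemize}
Combining this with the estimate for $\partial_t R_k$ proves \eqref{eq: r_{(1)}}.

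\textbf{Main obstacle.} The delicate point is the time-integration trick in Step 2. We must check carefully that the damping term commutes with $\int_0^t$, which requires $a$ to be time-independent and $R_k(0)=0$. We must also check that the boundary and initial terms vanish, which requires the support of $\varphi(x+t\omega)$ to be controlled on $\Sigma$. The latter holds because $R_k$ has zero boundary data by construction.
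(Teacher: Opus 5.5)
Your proof is correct. It is the standard argument behind Isakov's Lemma~2, which the paper cites rather than proves: the transport equation removes the $\tau^{1}$ term, the plain energy estimate bounds $\partial_t R_k$, and the energy estimate for $\int_0^t R_k$ gives the factor $\tau^{-1}$ in $\tau\|R_k\|_{L^2(\Omega_T)}\le C\|\varphi\|_{H^3(\mathbb{R}^n)}$ after one integration by parts in $s$ against $e^{i\tau s}$ (this step is valid because $a,b$ are time-independent and $R_k(0)=\partial_t R_k(0)=0$).

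The sign check you left open closes with no adjustment needed. Since $\int_0^t \omega\cdot\nabla a(x+s\omega)\,ds = a(x+t\omega)-a(x)$, one gets $(\partial_t-\omega\cdot\nabla)A^{+} = -\tfrac{1}{2}a(x)A^{+}$, so the amplitude $\varphi(x+t\omega)A^{+}$ is exactly right as stated.
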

\begin{lemma}{\cite[Lemma~~3]{Isakov1991AnIH}}\label{lemma: geom opt sol v}
  {\it  Let $\Omega$ be an open, connected, and bounded subset of $\mathbb{R}^n$, $n\geq 2$ with a smooth boundary $\partial\Omega$ and $T>0$. For any $(a,\varphi)\in C_{c}^{\infty}(\Omega)\times C_{c}^{\infty}(\R^{n})$ such that $\left(\supp(\varphi)\pm T\omega\right)\cap\Omega=\emptyset$  and $\omega \in \mathbb{S}^{n-1}$. For any $\tau>0$, there exists a solution of  \eqref{FBVP; v} having the form 
  \begin{align}\label{eq: v}
  v(t,x) = \varphi(x+t\omega)A^{-}(t,x)\exp(-i\tau(x\cdot\omega + t)) + R_0(t,x,\tau),
\end{align}
where
\begin{align*}
    A^{-}(t,x)=\exp\left(\frac{1}{2}\int_0^ta(x+s\omega)\,ds\right),
\end{align*}
 and $R_0(t,x,\tau)$ vanishes at the final time; that is,
\begin{align}
   R_0(T,x,\tau) = \partial_{t}R_0(T,x,\tau) =  0   \text{ in } \Omega.
\end{align}
Moreover, $R_0(t,x,\tau)$ satisfies the following estimate
\begin{align}\label{eq: estimate of r_{(2)}}
  \tau\|R\|_{L^{2}(\Omega_{T})}+\|\partial_{t} R\|_{L^{2}(\Omega_{T})} \leq C \|\varphi\|_{H^{3}{(\R^n})}.
\end{align}}
\end{lemma}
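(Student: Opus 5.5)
The statement is Lemma \ref{lemma: geom opt sol v}: a backward geometric optics solution for the adjoint damped wave operator. The equation \eqref{FBVP; v} is not displayed above, so I take it to be the formal adjoint problem
\[
\Box v - a\,\partial_t v + b\,v = 0 \quad \text{in } \Omega_T .
\]
This is the operator obtained by integrating $\Box + a\partial_t + b$ by parts against a test function, since $a$ is independent of $t$. The plan mirrors the WKB construction behind Lemma \ref{lemma: geom opt sol v_{1}}, with the phase and the sign of the damping reversed and the Cauchy data placed at $t=T$.

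\textbf{Step 1: leading term.} I would write $v = \varphi(x+t\omega)A^{-}(t,x)e^{-i\tau(x\cdot\omega+t)} + R_0$. With phase $\psi = x\cdot\omega + t$ we have $|\partial_t\psi|^2 - |\nabla_x\psi|^2 = 0$, so the $\tau^2$ term vanishes.

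The coefficient of $\tau$ is $-2i\,(\partial_t - \omega\cdot\nabla_x)$ applied to the amplitude, plus $i\tau a$ times the amplitude from the damping term $-a\partial_t v$.

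The function $\varphi(x+t\omega)$ is annihilated by $\partial_t - \omega\cdot\nabla$. The transport equation $2(\partial_t - \omega\cdot\nabla)A^{-} = aA^{-}$ is then solved by
\[
A^{-}(t,x) = \exp\Bigl(\tfrac12\int_0^t a(x+s\omega)\,ds\Bigr).
\]
To check this, differentiate along the characteristic $s\mapsto(s,\,x+t\omega-s\omega)$. There $y = x+t\omega$ is fixed, so $A^{-}$ becomes $\exp\bigl(\tfrac12\int_0^t a(y-(t-s)\omega)\,ds\bigr)$ and the derivative gives exactly $\tfrac12 a\,A^{-}$. I would verify this sign carefully.

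After this cancellation, $(\Box - a\partial_t + b)$ applied to the leading term equals $e^{-i\tau\psi}h$, where
\[
h = -(\Box - a\partial_t + b)\bigl(\varphi(x+t\omega)A^{-}\bigr)
\]
is independent of $\tau$. Its $L^2(\Omega_T)$ norm is bounded by $C\|\varphi\|_{H^2}$, and in fact by $C\|\varphi\|_{H^3}$ after one time derivative. This uses that $a,b$ are smooth and compactly supported and $T$ is fixed.

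\textbf{Step 2: remainder.} The remainder $R_0$ must solve
\[
(\Box - a\partial_t + b)R_0 = e^{-i\tau\psi}h \ \text{ in } \Omega_T, \qquad R_0|_\Sigma = 0, \qquad R_0(T) = \partial_t R_0(T) = 0 .
\]
The boundary condition $R_0|_\Sigma = 0$ is legitimate because the leading term vanishes on $\Sigma$. This follows from the support hypothesis: $\supp\varphi \pm T\omega$ avoiding $\Omega$, together with convexity of the segment in $t$, should give $\varphi(x+t\omega) = 0$ for $x\in\partial\Omega$. If only the endpoints are controlled, I would instead carry the boundary values into $R_0$ as data; this point needs care.

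Reversing time, $t\mapsto T-t$, turns the problem into a forward damped wave equation with zero data. Lemma \ref{lem:potential and damping} (or its $L^2$-level energy version) then gives existence.

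\textbf{Step 3: the estimate (main obstacle).} The bound \eqref{eq: estimate of r_{(2)}} needs the factor $\tau$ on $\|R_0\|_{L^2}$. A plain energy estimate only yields $\|\partial_t R_0\|_{L^2} + \|R_0\|_{L^2} \le C\|h\|_{L^2}$.

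To gain the factor $\tau$, I would write the source as
\[
e^{-i\tau\psi}h = \partial_t\Bigl(\tfrac{i}{\tau}e^{-i\tau\psi}h\Bigr) - \tfrac{i}{\tau}e^{-i\tau\psi}\partial_t h .
\]
Then introduce $\tilde R(t) = \int_t^T R_0(s)\,ds$, which is Isakov's trick. The function $\tilde R$ solves the same adjoint equation with source $O(\tau^{-1})$ in $L^2$, with norm controlled by $\|h\|_{L^2} + \|\partial_t h\|_{L^2}$, hence by $\|\varphi\|_{H^3}$. Its energy estimate gives $\|\partial_t\tilde R\|_{L^2} = \|R_0\|_{L^2} \le C\tau^{-1}\|\varphi\|_{H^3}$.

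Two points make this work. First, $a$ is time independent, so integrating in time commutes with the operator. Second, $\tilde R$ vanishes to the required order at $t=T$. The commutator term coming from $a\partial_t$ only produces lower-order terms already bounded by energy. Combining this with the direct energy bound for $\partial_t R_0$ yields \eqref{eq: estimate of r_{(2)}}.
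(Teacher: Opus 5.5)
Your strategy is the standard argument behind Isakov's Lemma~3, which the paper cites without giving a proof of its own. It consists of a WKB leading term, a remainder with zero lateral Dirichlet data and zero data at $t=T$, an energy estimate for $\partial_t R_0$, and the time-integration trick to gain $\tau^{-1}$ for $R_0$; that trick is exactly where $\|\varphi\|_{H^3}$ enters. Steps~1 and~3 are correct. Since $(\partial_t-\omega\cdot\nabla_x)\int_0^t a(x+s\omega)\,ds=a(x)$, you get $2(\partial_t-\omega\cdot\nabla_x)A^-=aA^-$. Because $a$ is time independent and $R_0(T)=\partial_tR_0(T)=0$, one has exactly $(\Box-a\partial_t+b)\int_t^T R_0\,ds=\int_t^T e^{-i\tau\psi}h\,ds$, with no commutator term at all. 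One integration by parts in $s$ then bounds this source by $C\tau^{-1}\|\varphi\|_{H^3}$. Your reading of \eqref{FBVP; v} as the damped adjoint problem is the one that gives $A^-$ a meaning. In the paper \eqref{FBVP; v} is literally $\Box v_0=0$, used with $A^-\equiv 1$, while the damped version is what is used later for $v_a$; your general version covers both.

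Step~2, however, rests on a false claim: the leading term does \emph{not} vanish on $\Sigma$, and no convexity argument can give this. The hypothesis constrains the translate $\supp\varphi-t\omega$ only at isolated times ($t=\pm T$, and $t=0$ for the test functions the paper actually uses). For intermediate $t$ the translate must pass through $\Omega$, hence cross $\partial\Omega$. Otherwise $\varphi(x+t\omega)\equiv 0$ on $\Omega_T$, and the lemma would be useless for identities such as $\int_{\Omega_T}a\,\varphi^2(x+t\omega)A^+\,dx\,dt$. For example, with $\Omega=B_1(0)$, $\omega=e_1$, $T=3$ and $\supp\varphi\subset B_{1/4}(\tfrac32 e_1)$, the packet lies inside $\Omega$ at $t=\tfrac32$.

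The correct reason you may impose $R_0|_\Sigma=0$ is that \eqref{FBVP; v} prescribes no lateral condition at all, only Cauchy data at $t=T$. That is also why the paper keeps $\int_\Sigma v_0\,\partial_\nu v_k$ and $\int_\Sigma v_k\,\partial_\nu v_0$ in its integral identity as known quantities rather than discarding them.

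The support hypothesis is needed at $t=T$ instead, and your proposal never checks this. Since $\varphi$ vanishes on the open set $\Omega+T\omega$, the leading term and all its derivatives vanish at $t=T$ in $\Omega$. This gives $v(T)=\partial_t v(T)=0$. It also makes the source $e^{-i\tau\psi}h$ vanish to infinite order at $t=T$, which supplies the compatibility conditions Lemma~\ref{lem:potential and damping} requires after time reversal.

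Finally, your fallback of carrying the boundary values into $R_0$ would be fatal. Requiring $v|_\Sigma=0$ together with $v(T)=\partial_t v(T)=0$ forces $v\equiv 0$ by uniqueness. Then $R_0=-\varphi(x+t\omega)A^-e^{-i\tau\psi}$, whose $L^2(\Omega_T)$ norm is of order $1$ and whose time derivative is of order $\tau$. The freedom in the lateral condition is therefore essential, not a technicality.
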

Now, as explained in subsection \ref{subsection; linear coefficients} below, the recovery of linear coefficients $a$ and $b$ requires that the product of two $L^2$ Sobolev functions on $\Omega_T$, which solve linear PDEs, is dense in $L^1(\Omega_T)$. Therefore, for reconstructing the linear coefficients, it suffices to use the solutions constructed in Lemmas \ref{lemma: geom opt sol v_{1}} and \ref{lemma: geom opt sol v}.
In contrast, the recovery of the nonlinear potential $q$ involves products of more than two $L^2$ Sobolev solutions of the associated linear equations on $\Omega_T$. Such products need not belong to $L^1(\Omega_T)$, which necessitates working with solutions in spaces with stronger regularity. To address this issue, we construct asymptotic solutions whose remainder term admit $L^\infty$ decay estimate, which are suitable for recovering the nonlinear potential $q$. These $L^\infty$ estimates are obtained by considering higher-order expansions of the solution in negative powers of a large parameter $\tau$. For more details, we refer to Propositions \ref{Asymptotic solutions} and \ref{Asymptotic solutions for IBVP} below.
\begin{proposition}[Asymptotic solutions for backward problem]  \label{Asymptotic solutions}
{\it Let $\Omega, \Omega_T, \partial\Omega$, $T>0$ and $(a,b) \in C_c^{\infty}(\Omega)\times C_c^{\infty}(\Omega)$ be as before.  Then the backward wave problem
   \begin{align}\label{pr;backwad_problem}
     \begin{cases}
         \mathcal{L}_{a,b} v(t,x):= \left(\Box - a(x)\partial_t + b(x)\right)v(t,x) = 0, \ \ (t,x) \in \Omega_T,\\
     v(T,x)=\partial_t v (T,x) = 0,  \ \ x \in\Omega,
     \end{cases}
 \end{align}
 admits an asymptotic solution  having the form 
    \begin{equation}\label{eqn: IBVP geom opt sol 1}
 v(t,x)= e^{-{\ell}i\tau\,(t+x \cdot\omega)}\left(\beta_{0}(t,x)+\frac{\beta_{1}(t,x)}{\tau}+\frac{\beta_{2}(t,x)}{\tau^2}+\cdots+\frac{\beta_{N}(t,x)}{\tau^N}\right)+R_0(t,x,\tau),
    \end{equation}
    where the functions $\beta_j$ for $j = 0,1,\dots, N$ satisfy the following transport systems:
The function $\beta_0$ solves
\begin{align}
\begin{cases}
     L\beta_0(t,x):= \left(\partial_t - \omega\cdot\nabla_x - \frac{a(x)}{2}\right)\beta_{0}(t,x) = 0, & (t,x)\in  \Omega_T, \\
    \beta_0(T,x)=0,\quad \partial_t \beta_0(T,x)=0, & \quad x\in  \Omega .
\end{cases}
\end{align}
For $j = 1,\dots, N$, the functions $\beta_j$ solves
\begin{align}\label{eq:transport-mj}
\begin{cases}
   2{\ell}i L \beta_j(t,x) =  \mathcal{L}_{a,b} \beta_{j-1}(t,x), & (t,x)\in  \Omega_T, \\
    \beta_j(T,x)=0,\quad \partial_t \beta_j(T,x)=0, & \quad x\in  \Omega .
\end{cases}
\end{align}
 The correction term $R_0(t,x,\tau)$ satisfies 
\begin{align}\label{eq: r_{(2)}}
  R_0(T,x,\tau)=\partial_t R_0(T,x,\tau)=0,\  x\in  \Omega \  \mbox{and}\ \left\|R_0\right\|_{L^{\infty}(\Omega_{T})}\leq \frac{C}{\tau},\ \mbox{for any $\tau>1$}
\end{align}
where the constant $C>0$ is independent of $\tau$.}
    \end{proposition}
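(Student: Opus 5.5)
The plan is a WKB-type construction: conjugate $\mathcal{L}_{a,b}$ by the phase $e^{-\ell i\tau(t+x\cdot\omega)}$, solve the resulting transport hierarchy for the amplitudes $\beta_j$, and then remove the leftover $O(\tau^{-N})$ error with a correction $R_0$ from a backward linear problem.

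\textbf{Step 1: conjugation.} Write $\phi=t+x\cdot\omega$ and note $\Box\phi=0$ and $(\partial_t\phi)^2-|\nabla_x\phi|^2=1-|\omega|^2=0$. For any smooth amplitude $\beta$ a direct computation gives
\begin{align*}
\mathcal{L}_{a,b}\bigl(e^{-\ell i\tau\phi}\beta\bigr)=e^{-\ell i\tau\phi}\Bigl(-2\ell i\tau\,(\partial_t-\omega\cdot\nabla_x)\beta+\ell i\tau\, a\,\beta+\mathcal{L}_{a,b}\beta\Bigr)=e^{-\ell i\tau\phi}\bigl(-2\ell i\tau L\beta+\mathcal{L}_{a,b}\beta\bigr),
\end{align*}
with $L=\partial_t-\omega\cdot\nabla_x-\tfrac{a}{2}$. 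The $\tau^2$ term vanishes by the eikonal identity, and the $a\partial_t$ term contributes $\ell i\tau a\beta$, which produces the $-\tfrac a2$ in $L$. I would check the signs carefully against \eqref{eq:transport-mj}; if a convention differs, $\beta_j$ is simply rescaled.

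Inserting the ansatz $\sum_{j=0}^N\tau^{-j}\beta_j$ and collecting powers of $\tau$, the choices $L\beta_0=0$ and $2\ell i L\beta_j=\mathcal{L}_{a,b}\beta_{j-1}$ for $1\le j\le N$ make the coefficients of $\tau^{1},\tau^0,\dots,\tau^{-(N-1)}$ cancel. The only remainder is $e^{-\ell i\tau\phi}\tau^{-N}\mathcal{L}_{a,b}\beta_N$.

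\textbf{Step 2: solving the transport equations.} $L$ is a first-order operator along the characteristics $s\mapsto(t+s,x-s\omega)$. Each equation is solved explicitly by integrating along these lines with the integrating factor $\exp(\tfrac12\int a)$.

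The homogeneous problem with zero data at $t=T$ has only the trivial solution. So, as in Lemma~\ref{lemma: geom opt sol v}, the data should be read on a region where the characteristic enters from outside $\Omega$; the paper's data are really meant to be posed outside $\supp a$. Concretely, I would take $\beta_0=\varphi(x+t\omega)A^{-}(t,x)$ with the transport of $\varphi$ and $\operatorname{supp}\varphi$ chosen as in Lemma~\ref{lemma: geom opt sol v}. Then $\beta_0$ vanishes near $t=T$ on $\Omega$. The later $\beta_j$ are obtained by Duhamel integration of the smooth right-hand sides, starting from the line where $\beta_{j-1}$ vanishes, so they also vanish near $t=T$ in $\Omega$.

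All $\beta_j$ are $C^\infty$ on $\overline{\Omega_T}$, with bounds depending only on $a$, $b$, $\varphi$, $N$ and $T$.

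\textbf{Step 3: the correction term.} Define $R_0$ as the solution of the backward problem
\begin{align*}
\mathcal{L}_{a,b}R_0=-\tau^{-N}e^{-\ell i\tau\phi}\mathcal{L}_{a,b}\beta_N \quad\text{in }\Omega_T,\qquad R_0|_{\Sigma}=0,\qquad R_0(T)=\partial_tR_0(T)=0.
\end{align*}
The substitution $t\mapsto T-t$ turns this into a forward problem covered by Lemma~\ref{lem:potential and damping}; the sign of the damping flips, which is harmless. The compatibility conditions hold because the source vanishes near $t=T$.

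Lemma~\ref{lem:potential and damping} gives
\begin{align*}
\|R_0\|_{\mathcal{E}_{m+1}}\le C\tau^{-N}\sum_{k\le m}\bigl\|\partial_t^k\bigl(e^{-\ell i\tau\phi}\mathcal{L}_{a,b}\beta_N\bigr)\bigr\|_{L^1(0,T;H^{m-k})}\le C\tau^{-N+m}.
\end{align*}
Each derivative falling on the phase costs one factor of $\tau$, which explains the power $\tau^{m}$. With $m>n+1$, the Sobolev embedding $\mathcal{E}_{m+1}\hookrightarrow L^\infty(\Omega_T)$ then yields $\|R_0\|_{L^\infty}\le C\tau^{m-N}$. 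Choosing $N\ge m+1$ gives $\|R_0\|_{L^\infty(\Omega_T)}\le C/\tau$ for $\tau>1$.

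\textbf{Main obstacle.} The crux is this step: trading $\tau$-losses from derivatives of the oscillating phase against the $\tau^{-N}$ gain, so that a high-regularity energy estimate gives $L^\infty$ decay. The proof hinges on taking $N$ large relative to $m$ and $n$. The secondary subtlety is making the final-time conditions on the $\beta_j$ consistent with nontrivial amplitudes, which is handled by the support choice of $\varphi$ as in Lemma~\ref{lemma: geom opt sol v}.
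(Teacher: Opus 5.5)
Your proposal is correct and follows the paper's proof essentially step by step. It uses the same conjugation identity and transport hierarchy, the same explicit amplitude $\beta_0=\varphi(x+t\omega)A^{-}(t,x)$ with $\varphi$ supported as in Lemma~\ref{lemma: geom opt sol v}, Duhamel integrals along characteristics for the $\beta_j$, and the same correction $R_0$ obtained by time reversal, Lemma~\ref{lem:potential and damping}, and Sobolev embedding with $N$ large. Your version is slightly tighter than the paper's: you impose $R_0|_{\Sigma}=0$ explicitly and get $\tau^{m-N}$ instead of the paper's cruder $\tau^{2m-N}$. One small correction: the transport problem posed only on $\Omega$ with zero data at $t=T$ is not uniquely solvable, and that non-uniqueness is exactly why the nontrivial $\beta_0$ exists.
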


 Before proving Proposition \ref{Asymptotic solutions}, we establish the existence
of the functions $\beta_j$, $j=0,1,\dots,N$, solving the associated transport systems.


\begin{lemma}\label{prop:transport}
{\it For any integer $j = 0,\dots, N$, there exist functions 
    $\beta_{j}\in C^{\infty}\left(  \overline{\Omega}_{T}\right)$
 such that $\beta_0$ solves 
\begin{align}\label{eqn:transport-m0}
\begin{cases}
L \beta_0(t,x) = 0, \ \  &(t,x)\in \Omega_T, \\
\beta_0(T,x)=\partial_t \beta_0(T,x)=0, \ \  &x\in \Omega,
\end{cases}
\end{align}
and for $1\leq j\leq N$, the functions $\beta_j$,  solve
\begin{align}\label{eqn:transport-mj}
\begin{cases}
2{\ell}i\, L \beta_j(t,x) =  \mathcal{L}_{a,b} \beta_{j-1}(t,x), \ \  (t,x) \in \Omega_T, \\
\beta_j(T,x)= \partial_t \beta_j(T,x)=0, \ \  x\in \Omega.
\end{cases}
\end{align}}
\end{lemma}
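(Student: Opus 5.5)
The plan is to treat each equation in \eqref{eqn:transport-m0}--\eqref{eqn:transport-mj} as a first-order linear transport equation along the characteristic lines of $L=\partial_t-\omega\cdot\nabla_x-\tfrac{a}{2}$, and to solve them recursively in $j$ by the method of characteristics together with a Duhamel formula. First extend $a,b$ by zero to $C_c^\infty(\mathbb{R}^n)$ and work on $[0,T]\times\mathbb{R}^n$. For fixed $(t,x)$ consider the line $s\mapsto (s,\,x+(t-s)\omega)$. Along this line,
\begin{align*}
\frac{d}{ds}\,\beta\bigl(s,x+(t-s)\omega\bigr)=(\partial_t\beta-\omega\cdot\nabla_x\beta)\bigl(s,x+(t-s)\omega\bigr).
\end{align*}
Hence $L\beta=G$ becomes the scalar ODE $\frac{d}{ds}y=\tfrac12 a\,y+G$. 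Introduce the integrating factor
\begin{align*}
E(s;t,x)=\exp\Bigl(-\tfrac12\int_t^{s}a\bigl(x+(t-r)\omega\bigr)\,dr\Bigr).
\end{align*}
Integrating from $s=t$ to $s=T$ with the datum $\beta(T,\cdot)$ prescribed gives the explicit formula
\begin{align*}
\beta(t,x)=E(T;t,x)^{-1}\beta\bigl(T,x+(t-T)\omega\bigr)-\int_t^T E(s;t,x)^{-1}E(T;t,x)^{0}\,G\bigl(s,x+(t-s)\omega\bigr)\,ds ,
\end{align*}
written schematically with the appropriate exponential weights. This is smooth in $(t,x)$ whenever $a$, $G$ and the datum are smooth, by differentiation under the integral sign. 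Uniqueness follows from uniqueness for the ODE on each characteristic.

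The recursion then runs as follows. For $j=0$ we take $G=0$ and datum $\beta_0(T,\cdot)=0$. The formula gives a $C^\infty$ solution; with the data exactly as written in \eqref{eqn:transport-m0}, ODE uniqueness forces $\beta_0\equiv 0$. For $j\ge1$ we set $G_j=\frac{1}{2\ell i}\mathcal{L}_{a,b}\beta_{j-1}$. This is $C^\infty(\overline{\Omega}_T)$ by the induction hypothesis, since $\mathcal{L}_{a,b}$ is a differential operator with smooth coefficients. We then define $\beta_j$ by the Duhamel formula with zero datum at $t=T$. By induction every $\beta_j\in C^\infty(\overline{\Omega}_T)$, and in fact $\beta_j\equiv0$ under the stated data. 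Restricting from $\mathbb{R}^n$ to $\overline{\Omega}$ gives the required functions.

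The delicate point, and the step I expect to be the main obstacle, is the second terminal condition $\partial_t\beta_j(T,x)=0$. A first-order equation in $t$ does not allow $\partial_t\beta_j(T)$ to be prescribed independently. Instead, evaluating the equation at $t=T$ gives
\begin{align*}
\partial_t\beta_j(T,x)=\omega\cdot\nabla_x\beta_j(T,x)+\tfrac{a}{2}\beta_j(T,x)+G_j(T,x)=G_j(T,x).
\end{align*}
So I would verify inductively the stronger statement that $\partial_t^k\beta_j(T,\cdot)=0$ for all $k\ge0$. For this, differentiate the transport equation repeatedly in $t$, which is legitimate because $a$ is independent of $t$. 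This shows that $\partial_t^k\beta_j(T)$ is a combination of $x$-derivatives of $\partial_t^{r}\beta_j(T)$ with $r<k$ and of $\partial_t^{r}G_j(T)$. The quantities $\partial_t^{r}G_j(T)$ vanish because $\mathcal{L}_{a,b}$ only involves $\partial_t$, $\partial_t^2$ and $x$-derivatives of $\beta_{j-1}$, all of which vanish at $T$ by the inductive hypothesis. With the trivial data this is automatic. For the nontrivial amplitudes suggested by Lemma~\ref{lemma: geom opt sol v}, such as $\varphi(x+t\omega)A^-$ with $(\supp\varphi\pm T\omega)\cap\Omega=\emptyset$, the same argument works: the support condition makes every $t$-derivative at $t=T$ vanish on $\Omega$.
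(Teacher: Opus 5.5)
Your construction is essentially the paper's. You integrate $L$ along the lines $x+t\omega=\text{const}$ with an integrating factor and define $\beta_j$ recursively by a Duhamel integral with source $\frac{1}{2\ell i}\mathcal{L}_{a,b}\beta_{j-1}$. You then get the second terminal condition not by prescribing it but by showing inductively that every $\partial_t^k\beta_j(T,\cdot)$ vanishes on $\Omega$. You do this by differentiating the transport equation in $t$; the paper differentiates its explicit integral formula and uses $\partial_t^k\mathcal{L}_{a,b}\beta_{j-1}(T,\cdot)=0$ on $\Omega$, which is the same induction. Your nontrivial choice $\beta_0=\varphi(x+t\omega)\exp\bigl(\tfrac12\int_0^t a(x+s\omega)\,ds\bigr)$ with $(\supp\varphi\pm T\omega)\cap\Omega=\emptyset$ is exactly the paper's $\beta_0$ in \eqref{eqn:m0}. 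Your Duhamel formula has inverted exponentials; it should read $\beta(t,x)=E(T;t,x)\,\beta\bigl(T,x-(T-t)\omega\bigr)-\int_t^T E(s;t,x)\,G\bigl(s,x+(t-s)\omega\bigr)\,ds$.

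One claim is wrong and should be dropped: the data ``exactly as written'' in \eqref{eqn:transport-m0} do not force $\beta_0\equiv0$. The equation is posed only on $\Omega_T$ and the data only on $\{T\}\times\Omega$. The characteristic through $(t,x)\in\Omega_T$ reaches $t=T$ at $x-(T-t)\omega$, which lies outside $\overline{\Omega}$ whenever $T-t>\operatorname{diam}(\Omega)$. So characteristics leave through $\Sigma$, where nothing is prescribed. Your ODE uniqueness applies only to the extended problem with $\beta_0(T,\cdot)$ given on all of $\mathbb{R}^n$. The nonzero $\beta_0$ above satisfies $L\beta_0=0$ and vanishes with all $t$-derivatives at $t=T$ on $\Omega$, so it is a counterexample; your last paragraph in fact contradicts the earlier claim.

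This is more than bookkeeping. The choice $\beta_j\equiv0$ satisfies the lemma literally but makes the asymptotic solution of Proposition \ref{Asymptotic solutions} useless. The reconstruction of $q$ needs $\beta_0=\mathcal{F}$ in order to produce the leading term $\int q\,\varphi^{\ell+1}(A^{+})^{\ell-1}$. The freedom in the terminal data outside $\Omega$, exploited by the support condition on $\varphi$, is the real content of the lemma. Build your proof on the nontrivial $\beta_0$ rather than on the zero solution.
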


\begin{proof}
Our strategy to prove this Lemma, is via constructing  explicit solutions to the transport Equations
\eqref{eqn:transport-m0} and \eqref{eqn:transport-mj}. We begin by reformulating the backward problem as an initial value problem (IVP)
through a suitable change of variables. More precisely, we define
\begin{equation}\label{eqn:tildem0tomo}
\widetilde{\beta}_0(t,x) := \beta_0(T-t,x).
\end{equation}
By the chain rule, $\widetilde{\beta}_0$ satisfies the following IVP 
\begin{equation}\label{eqn:tildem0}
\begin{cases}
\displaystyle
\left(-\partial_t - \omega\cdot\nabla_x - \frac{a(x)}{2}\right)\widetilde{\beta}_0(t,x) = 0,\ \ (t,x) \in \Omega_T, \\[8pt]
\widetilde{\beta}_0(0, x )=\partial_t \widetilde{\beta}_0(0, x )=0,\ \  x\in \Omega.
\end{cases}
\end{equation}
Next, we plan to convert the above IVP to one that does not contain the lower-order term. To do so, we introduce the following 
function 
\begin{equation}\label{eqn:m0tov0}
\widetilde{v}_0(t,x) := -\widetilde{\beta}_0(t,x) 
  \,
\exp\left(-\int_{0}^{T-t}\frac{a(x+\tau\omega)}{2}\,d\tau\right),\ (t,x)\in \Omega_T,
\end{equation}
which  implies that 
\begin{align}\label{eqn:m0tov0_revised}
    \widetilde{\beta}_0(t,x) = -\widetilde{v}_0(t,x)\,
\exp\left(\int_{0}^{T-t}\frac{a(x+\tau\omega)}{2}\,d\tau\right), \ (t,x)\in \Omega_T.
\end{align}
Next, we derive an Equation satisfied by $\widetilde{v}_0$ by computing the action of the  operator 
\begin{align*}
    -\partial_t - \omega\cdot\nabla_x - \frac{a(x)}{2}
\end{align*}
to $\widetilde \beta_0(t,x)$. We start with the following computation,
\begin{align} \label{eq:dtm0}
-\partial_t \widetilde \beta_0
= \exp\left(\int_{0}^{T-t}\frac{a(x+\tau\omega)}{2}\,d\tau\right)
\left(
\partial_t \widetilde{v}_0
- \frac{a\left(x+(T-t)\omega\right)}{2} \widetilde{v}_0
\right),
\end{align}
and
\begin{align*}
\begin{aligned}
&-\omega\cdot\nabla_x \widetilde \beta_0
= \exp\left(\int_{0}^{T-t}\frac{a(x+\tau\omega)}{2}\,d\tau\right)
\left(
\omega\cdot\nabla_x \widetilde{v}_0
+ \frac{\widetilde{v}_0}{2}
\int_{0}^{T-t}\omega\cdot\nabla a(x+\tau\omega)\,d\tau
\right). 
\end{aligned}
\end{align*}
The above expression, after using the chain rule and the fundamental Theorem of calculus, is given by 
\begin{equation*}
\int_{0}^{T-t}\omega\cdot\nabla a(x+\tau\omega)\,d\tau=\int_{0}^{T-t}\partial_{\tau}a(x+\tau\omega)\,d\tau
= a\left(x+(T-t)\omega\right)-a(x)
\end{equation*}
implies that 
\begin{align}\label{eq:gradm0}
-\omega\cdot\nabla_x \widetilde \beta_0=\exp\left(\int_{0}^{T-t}\frac{a(x+\tau\omega)}{2}\,d\tau\right)
\left[
\omega\cdot\nabla_x \widetilde{v}_0
+ \frac{\widetilde{v}_0}{2}\Big\{a\left(x+(T-t)\omega\right)-a(x)\Big\}
\right]. 
\end{align} 
Combining \eqref{eq:dtm0} and \eqref{eq:gradm0}, we obtain
\begin{align}
\left(-\partial_t - \omega\cdot\nabla_x -\frac{a(x)}{2}\right)\widetilde \beta_0(t,x)
=
\exp\left(\int_{0}^{T-t}\frac{a(x+\tau\omega)}{2}\,d\tau\right)
\left(
\partial_t \widetilde{v}_0(t,x) + \omega\cdot\nabla_x \widetilde{v}_0(t,x)
\right).
\end{align}
Using  \eqref{eqn:tildem0} along with  non-vanishing of exponential factor implies that  $\widetilde{v}_0$ solves  the IVP
\begin{equation}\label{eq:transport-v0}
\begin{cases}
\partial_t \widetilde{v}_0(t,x) + \omega\cdot\nabla_x \widetilde{v}_0(t,x) = 0,
\  (t,x)\in \Omega_T, \\
\widetilde{v}_0(0, x )=\partial_t \widetilde{v}_0(0, x )=0,\ \  x\in \Omega .
\end{cases}
\end{equation}
We choose 
\begin{equation*}
\widetilde{v}_0(t,x) := -\varphi\left(x + \left(T- t\right)\omega\right),
\end{equation*}
where $\omega\in\mathbb{S}^{n-1}$ and $\varphi \in C^\infty_c(\mathbb{R}^n)$ is such that  
\begin{align}\label{eqn:condt.testfunction}
    \mbox{$\left(\operatorname{supp}(\varphi)\pm T\omega\right)\cap \Omega=\emptyset$.}
\end{align}
With this choice, we observe that $\widetilde{v}_0$ solves IVP \eqref{eq:transport-v0}.
From Equation \eqref{eqn:m0tov0_revised}, we obtain
\begin{equation*}
\widetilde \beta_0(t,x)
= \varphi\left(x + \left(T- t\right)\omega\right)
\exp\left(\int_{0}^{T-t}\frac{a(x+\tau\omega)}{2}\,d\tau\right).
\end{equation*}
Changing to the original variables from \eqref{eqn:tildem0tomo} yields the following
\begin{equation}\label{eqn:m0}
\beta_0(t,x)
= \widetilde \beta_0(T-t,x)
= \varphi(x+ t\omega)
\exp\left(\int_{0}^{t}\frac{a(x+\tau\omega)}{2}\,d\tau\right).
\end{equation}
Next, we construct an explicit solution for $\beta_1$, which satisfies the transport Equation \eqref{eqn:transport-mj} for $j=1$, that is,
\begin{align}\label{eqn:m1-g}
\begin{cases}
L \beta_1(t,x) = g(t,x), \ &(t,x) \in \Omega_T, \\
\beta_1(T,x)=\partial_t \beta_1(T,x)=0, \ \  &x \in  \Omega .
\end{cases}
\end{align}
where \begin{equation}\label{eqn:g-rep.}
g(t,x) := \frac{1}{2{\ell}i} \mathcal{L}_{a,b} \beta_0(t,x).
\end{equation}
We proceed as is done for the construction of $\beta_0$, however, due to the presence of the inhomogeneous term $g$, calculations are slightly more involved. We start by converting the backward problem \eqref{eqn:tildem1} to IVP by introducing  the function $\widetilde \beta_1(t,x)$ by 
\begin{equation*}
 \widetilde \beta_1(t,x) := \beta_1(T-t,x),\ (t,x)\in \Omega_T.
\end{equation*}
Then $\widetilde \beta_1$ satisfies the following IVP
\begin{equation}\label{eqn:tildem1}
\begin{cases}
\left(-\partial_t - \omega\cdot\nabla_x - \dfrac{a(x)}{2}\right)\widetilde \beta_1(t,x)
= g(T-t,x), & (t,x)\in  \Omega_T, \\[10pt]
\widetilde \beta_1(0, x )=0,\quad \partial_t \widetilde \beta_1(0, x )=0,
& \quad x\in  \Omega .
\end{cases}
\end{equation}
Next, we substitute
\begin{equation}\label{eqn:tildem1 rep}
\widetilde \beta_1(t,x)
= - v(t,x)\exp\left(\int_{0}^{T-t}\frac{a(x+\tau\omega)}{2}\,d\tau\right).
\end{equation}
Now, we derive an Equation satisfied by $v$ by computing the action of the operator
$\left(-\partial_t - \omega\cdot\nabla_x - \dfrac{a(x)}{2}\right)$
on $\widetilde \beta_1$, to  get
\begin{align}
\left(-\partial_t - \omega\cdot\nabla_x - \frac{a(x)}{2}\right)\widetilde \beta_1(t,x)
&= \exp\left(\int_{0}^{T-t}\frac{a(x+\tau\omega)}{2}\,d\tau\right)
\left( \partial_t v + \omega\cdot\nabla_x v \right).
\end{align}
After using the IVP \eqref{eqn:tildem1} along with the non-vanishing of the exponential factor, we arrive at 
\begin{equation}
\begin{cases}
\partial_t v(t,x) + \omega\cdot\nabla_x v(t,x)
= g(T-t,x)\exp\left(-\displaystyle\int_{0}^{T-t}\frac{a(x+\tau\omega)}{2}\,d\tau\right),  \ (t,x)\in \Omega_T,
 \\
v(0, x )=\partial_t v(0, x )=0, \ \  x\in\Omega.
\end{cases}
\end{equation}
To simplify the above expression, we define
\begin{equation}\label{def:L}
L(t,x)
:= g(T-t,x)\exp \left(-\int_{0}^{T-t}\frac{a(x+\tau\omega)}{2}\,d\tau\right).
\end{equation}
In addition, define the function
\begin{equation*}
z(s) := v(t+s,\,x+s\omega).
\end{equation*}
Then, using the chain rule
\begin{equation}
\frac{d}{ds} z(s)
= \partial_t v(t+s,x+s\omega)
+ \omega\cdot\nabla v(t+s,x+s\omega)
= L(t+s,x+s\omega).
\end{equation}
Integrating with respect to $s$ from $-t$ to $0$, we obtain
\begin{align}
\int_{-t}^{0} \frac{d}{ds} z(s)\,ds
&= \int_{-t}^{0} L(t+s,x+s\omega)\,ds, \\
z(0)-z(-t)
&= \int_{-t}^{0} L(t+s,x+s\omega)\,ds.
\end{align}
Using the initial condition $v(0, x )=0$, we have $z(-t)=v(0,x-t\omega)=0$ and hence
\begin{equation}\label{eqn:v-repr}
v(t,x)
= \int_{-t}^{0} L(t+s,x+s\omega)\,ds.
\end{equation}
From the definition of $L$ from \eqref{def:L}, we rewrite \eqref{eqn:v-repr} as
\begin{align}\label{value of v(t,x)}
v(t,x)
&= \int_{-t}^{0}
g(T-t-s,x+s\omega)
\exp\left(-\int_{0}^{T-(t+s)}\frac{a(x+(s+\tau)\omega)}{2}\,d\tau\right)
\,ds.
\end{align}
Using  Equations \eqref{eqn:tildem1 rep} and \eqref{value of v(t,x)},  together with the definition of $\widetilde{\beta}_1$, we obtain
\begin{align}
\begin{aligned}
&- \beta_1(T-t,x)\,
\exp\left(-\int_{0}^{T-t}\frac{a(x+\tau\omega)}{2}\,d\tau\right)\\
&\qquad =
\int_{-t}^{0}
g(T-t-s,x+s\omega)
\exp\left(-\int_{0}^{T-(t+s)}\frac{a(x+(s+\tau)\omega)}{2}\,d\tau\right)
\,ds.
\end{aligned} 
\end{align}
Substituting  $g$ from \eqref{eqn:g-rep.}, we get
\begin{align}
&- \beta_1(T-t,x)\,
\exp\left(-\int_{0}^{T-t}\frac{a(x+\tau\omega)}{2}\,d\tau\right)\\
&\qquad=
\int_{-t}^{0}
\frac{1}{2i\ell} \mathcal{L}_{a,b} \beta_0(T-t-s,x+s\omega)
\exp\left(-\int_{0}^{T-(t+s)}\frac{a(x+(s+\tau)\omega)}{2}\,d\tau\right)
\,ds.
\end{align}
Finally, using the transformation $t\mapsto T-t$ yields the explicit representation
\begin{align}\label{eqn:m1-explicit}
\beta_1(t,x)
&=
-\int_{-(T-t)}^{0}\frac{1}{2i\ell}
 \left(\mathcal{L}_{a,b} \beta_0(t-s,x+s\omega) 
\exp\left(-\int_{0}^{t-s}\frac{a(x+(s+\tau)\omega)}{2}\,d\tau\right)\right)
\,ds\\
& \qquad \times \exp\left(\int_{0}^{t}\frac{a(x+\tau\omega)}{2}\,d\tau\right).
\end{align}
By construction, the function $\beta_1$ satisfies the final-time condition; that is,
\begin{equation*}
\beta_1(T,x)=0.
\end{equation*}
Next, we show that $\beta_1$ also satisfies
\begin{equation*}
\partial_t \beta_1(T,x)=0.
\end{equation*}
For notational convenience, we introduce
\begin{align*}
E(t,x)
&:= \exp\left(\int_{0}^{t}\frac{a(x+\tau\omega)}{2}\,d\tau\right), \\
{E}_1(t,s,x)
&:= \exp\left(-\int_{0}^{t-s}\frac{a(x+(s+\tau)\omega)}{2}\,d\tau\right), \\
F(t,s,x)
&:= \frac{1}{2i\ell}\mathcal{L}_{a,b} \beta_0(t-s,x+s\omega).
\end{align*}
Now, using the above notations, we observe that the  function $\beta_1$  can be written as
\begin{equation}\label{eq:m1compact}
\beta_1(t,x)
=
-E(t,x)\int_{-(T-t)}^{0} F(t,s,x)\,{E}_1(t,s,x)\,ds.
\end{equation}
Differentiating  with respect to $t$ and applying the product rule, we obtain
\begin{align}\label{eq:dtm1}
\partial_t \beta_1(t,x)
&=
-E(t,x)\,
\partial_t\left(
\int_{-(T-t)}^{0} F(t,s,x)\,{E}_1(t,s,x)\,ds\right) \\
&\quad-
(\partial_t E)(t,x)
\int_{-(T-t)}^{0} F(t,s,x)\,{E}_1(t,s,x)\,ds.
\end{align}
Using the Leibniz rule, we get the following expression
\begin{align}
\partial_t \beta_1(t,x)
&=
E(t,x)\,F(t,-(T-t),x)\,{E}_{1}(t,-(T-t),x)-E(t,x)\,\int_{-(T-t)}^{0}
\partial_t\!\left(F\ {E}_1\right)(t,s,x)\,ds
 \nonumber\\
&\qquad \qquad-
(\partial_t E)(t,x)
\int_{-(T-t)}^{0} F(t,s,x)\,{E}_1(t,s,x)\,ds.
\end{align}
Evaluating the above expression at $t=T$, we obtain
\begin{align}\label{eqn:intermediatedtm1}
\partial_t \beta_1(T,x)
&=
E(T,x)\,F(T,0,x)\,{E}_{1}(T,0,x).
\end{align}
To establish that the above expression vanishes, we will show that $F(T,0,x)=0,$ for $x\in\Omega$. To verify this, we use condition \eqref{eqn:condt.testfunction} together with the explicit formula for $\beta_0$ given in \eqref{eqn:m0}, we conclude that 
\begin{equation*}
\partial_t^{k} \beta_0(T,x)= 0,
\ \  \text{ for every } k \in \mathbb{N}\cup\{0 \} \  \mbox{and}\      x\in\Omega.
\end{equation*}
Therefore, we have
\begin{equation*}
\mathcal{L}_{a,b} \beta_0(T,x)=0,
\end{equation*}
and since $\mathcal{L}_{a,b} \beta_0(T,x)=F(T,0,x)$, it follows from \eqref{eqn:intermediatedtm1} that
\begin{equation*}
\partial_t \beta_1(T,x)=0.
\end{equation*}
Now by the repeated use of product rule in \eqref{eq:dtm1} together with the fact that $\partial_{t}^{k}F(T,0,x)=0$, for every $k\in \mathbb{N}\cup \{0\}$, $x\in \Omega$  gives
\begin{equation}\label{eq:beta1_all_time_derivatives_vanish_at_T}
\partial_t^{k}\beta_{1}(T,x)=0,
\ \ \text{for every } k\in\mathbb{N}\cup\{0\}, \ \ x\in\Omega.
\end{equation}
Using calculations similar to those for $\beta_{1}$, we can define $\beta_{j}$, $j = 2,\dots, N$, as follows
\begin{align}\label{eqn:mj-explicit}
\beta_j(t,x)
&=
-\int_{-(T-t)}^{0}\frac{1}{2i\ell}
 \Bigg(\mathcal{L}_{a,b} \beta_{j-1}(t-s,x+s\omega) 
\exp\left(-\int_{0}^{t-s}\frac{a(x+(s+\tau)\omega)}{2}\,d\tau\right)\Bigg)
\,ds \\
& \qquad \times 
\exp\left(\int_{0}^{t}\frac{a(x+\tau\omega)}{2}\,d\tau\right).
\end{align}
By construction, $\beta_j$, $j = 2,\dots, N$, satisfies the final-time condition 
\begin{equation*}
\beta_j(T,x)=0.
\end{equation*}
Moreover, the vanishing of the time derivative at the final time
\begin{equation*}
\partial_t \beta_j(T,x)=0, \quad j = 2,\dots, N,
\end{equation*}
followed by an argument similar to the one used to establish 
$\partial_t \beta_{1}(T,x)=0$ together with \eqref{eq:beta1_all_time_derivatives_vanish_at_T}. This concludes the proof of the Lemma. 
\end{proof}
Next, using the construction of $\beta_j$ for $0\leq j\leq N$, established in the above Lemma, we are in a position to prove  Proposition \ref{Asymptotic solutions}.
\subsection{Proof of Proposition \ref{Asymptotic solutions}}
We start  by recalling the product rule for the $\Box$ operator
\begin{align}
    \Box (f g) = f\Box g + 2\partial_t f\partial_t g - 2 \nabla_x f\cdot\nabla_x g + g\Box f,\quad \text{ for } f,g\in C^{\infty}(\Omega_{T}).
\end{align}
Since $v$ is of the form \eqref{eqn: IBVP geom opt sol 1}, using the above product rule  combined with the fact that $\Box(e^{-{\ell}i\tau(t+x\cdot\omega)})=0$, we have
\begin{align}
\begin{aligned}
\mathcal{L}_{a,b}v(t,x)
&=\mathcal{L}_{a,b}\left(e^{-{\ell}i\tau(t+x\cdot\omega)}
\sum\limits_{j=0}^{N}\tau^{-j} \beta_j(t,x) + R_0(t,x,\tau)\right)\\&=  e^{-{\ell}i\tau(t+x\cdot\omega)}
\sum_{j=0}^{N}\tau^{-j}\mathcal{L}_{a,b}\beta_j(t,x)
- 2{\ell}i\tau e^{-{\ell}i\tau(t+x\cdot\omega)}
\sum_{j=0}^{N}\tau^{-j}
(\partial_t-\omega\cdot\nabla_x)\beta_j(t,x) \\
&\quad
+ {\ell}i\tau a(x)\,e^{-{\ell}i\tau(t+x\cdot\omega)}
\sum_{j=0}^{N}\tau^{-j} \beta_j(t,x)
+ \mathcal{L}_{a,b}R_0(t,x,\tau).
\end{aligned} 
\end{align}
Collecting terms with respect to the powers of $\tau$, we obtain
\begin{align}
\begin{aligned}
\mathcal{L}_{a,b}v(t,x)
&= e^{-{\ell}i\tau(t+x\cdot\omega)}
\Bigg\{
-2{\ell}i\tau L \beta_0(t,x)
+ \left(\mathcal{L}_{a,b}\beta_0 -2{\ell}i L \beta_1\right)(t,x) + \frac{1}{\tau}
\left(\mathcal{L}_{a,b}\beta_1 -2{\ell}i L \beta_2\right)(t,x) \\
& \ \ + \frac{1}{\tau^2}
\left(\mathcal{L}_{a,b}\beta_2 -2{\ell}i L \beta_3\right)(t,x)
+ \cdots
+ \frac{1}{\tau^N}\mathcal{L}_{a,b}\beta_N(t,x)
\Bigg\}  + \mathcal{L}_{a,b}R_0(t,x,\tau).
\end{aligned}
\end{align}
Using the transport Equations \eqref{eq:transport-mj}, all terms of order
$\tau^{-k}$ for $0\leq k\leq N-1$ vanish. Consequently, we obtain
\begin{align}
\mathcal{L}_{a,b}v(t,x)
&= e^{-{\ell}i\tau(t+x\cdot\omega)}
\frac{1}{\tau^{N}}\mathcal{L}_{a,b} \beta_{N}(t,x)
+ \mathcal{L}_{a,b}R_0(t,x,\tau).
\end{align}
Now in order to show that  $v$  solves   $\mathcal{L}_{a,b} v = 0$ in $\Omega_T$ and  $v(T,x)=\partial_t v(T,x)=0$, for $x\in\Omega$,  we conclude  that  $R_0$  must satisfies 
the following backward problem 
\begin{align}
\begin{cases}
\mathcal{L}_{a,b} R_0(t,x,\tau)
= - \tau^{-N}
e^{-{\ell}i\tau(t+x\cdot\omega)}
\mathcal{L}_{a,b} \beta_N(t,x),
\  (t,x)\in  \Omega_T, \\
R_0(T,x,\tau)=\partial_t R_0(T,x,\tau)=0,\ \  x\in  \Omega .
\end{cases}
\end{align}
Using the change of variables $\widetilde t = T - t$, the correction term $R_0$ satisfies the following IVP
\begin{align}
\begin{cases}
\mathcal{L}_{-a,b}\, R_0(\widetilde t,x,\tau)
= - \tau^{-N}
e^{-{\ell}i\tau(-\widetilde t+ T+x\cdot\omega)}
\mathcal{L}_{-a,b} \beta_N(\widetilde t,x),
& (t,x)\in \Omega_T, \\
R_0(0, x,\tau )=\partial_{t} R_0(0, x,\tau )=0,
& x\in \Omega .
\end{cases}
\end{align}
Based on the well-posedness of the IBVP \eqref{lem:potential and damping}, together with $\cap_{k=0}^{\infty}H^{k}(\Omega_{T})= C^{\infty}(\Omega_{T})$ (application of Sobolev embedding), it implies that the correction term
is $R_0 \in C^{\infty}(\Omega_T)$. Moreover, standard energy estimates yield the following bound
\begin{align}
\|R_0\|_{H^{m}(\Omega_T)}\leq C \tau^{2m-N},\qquad \text{ for every }  m\in\mathbb{N},
\end{align}
where the constant $C>0$ is independent of $\tau$ and $\tau>1$.
Choose $m>n+1$, then the Sobolev embedding Theorem implies
\begin{align}
\|R_0\|_{L^{\infty}(\Omega_T)}
\leq \|R_0\|_{H^{m}(\Omega_T)}
\leq C \tau^{2m-N}.
\end{align}
Choosing $N$ sufficiently large,  we conclude that
\begin{align}
\|R_0\|_{L^{\infty}(\Omega_T)}\leq C \tau^{-1},
\end{align} holds for any $\tau>1$. 
This completes the proof. \qed

\vspace{.2cm}
 Next, we present a proposition that establishes the asymptotic solution for the IVP \eqref{pr;Forward_problem}, proof of which follows by using the arguments similar to those used in the proof of Proposition~\ref{Asymptotic solutions}.
\begin{proposition}[Asymptotic solutions for IVP]  \label{Asymptotic solutions for IBVP}
{\it Let $\Omega, \Omega_T, \partial\Omega$, $T>0$ and $(a,b) \in C_c^{\infty}(\Omega)\times C_c^{\infty}(\Omega)$ be as before. Then the following initial   wave problem
   \begin{align}\label{pr;Forward_problem}
     \begin{cases}
         \mathcal{J}_{a,b} v(t,x):= \left(\Box + a(x)\partial_t + b(x)\right)v(t,x) = 0, \ \ (t,x) \in \Omega_T,\\
     v(0,x)=\partial_t v (0,x) = 0,  \ \  x \in\Omega,
     \end{cases}
 \end{align}
 admits an asymptotic solution  having the form 
    \begin{equation}\label{eqn: IVP geom opt sol 1}
 v(t,x)= e^{i\tau\,(t+x \cdot\omega)}\left(m_{0}(t,x)+\frac{m_{1}(t,x)}{\tau}+\frac{m_{2}(t,x)}{\tau^2}+\cdots+\frac{m_{N}(t,x)}{\tau^N}\right)+R(t,x,\tau),
    \end{equation}
    where the functions $m_j$ for $j = 0,1,\dots, N$, satisfy the following transport systems: 
The function $m_0$ solves
\begin{align}
\begin{cases}
     Jm_0(t,x):= \left(\partial_t - \omega\cdot\nabla_x + \frac{a(x)}{2}\right)m_{0}(t,x) = 0, \ \ (t,x)\in  \Omega_T, \\
    m_0(0,x)=\partial_t m_0(0,x)=0, \ \  x\in  \Omega .
\end{cases}
\end{align}
For $j = 1,\dots, N$, the functions $m_j$ solves
\begin{align}\label{eq:transport--mj}
\begin{cases}
   2i J m_j(t,x) =  \mathcal{J}_{a,b} m_{j-1}(t,x),\ \ (t,x)\in  \Omega_T, \\
    m_j(0,x)= \partial_t m_j(0,x)=0, \ \ x\in  \Omega .
\end{cases}
\end{align}
The correction term $R(t,x,\tau)$ satisfies 
\begin{align}\label{eq: r__{(2)}}
    R(0,x,\tau)=\partial_t R(0,x,\tau)=0,\ \text{for}\   x\in  \Omega\ \text{and}\  \|R\|_{L^{\infty}(\Omega_{T})}\leq \frac{C}{\tau}, \ \text{for any $\tau>1$}
\end{align}
where $C>0$ is independent of $\tau$.} 
    \end{proposition}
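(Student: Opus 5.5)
The plan is to repeat the WKB construction from the proof of Proposition~\ref{Asymptotic solutions}, now with the forward phase $e^{i\tau(t+x\cdot\omega)}$ and initial rather than final conditions. The time reversal $t\mapsto T-t$ is no longer needed.

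\textbf{Step 1: conjugation.} Apply $\mathcal{J}_{a,b}$ to $e^{i\tau(t+x\cdot\omega)}m$. Since $|\omega|=1$, $\Box e^{i\tau(t+x\cdot\omega)}=0$. The product rule for $\Box$ then gives
\begin{align*}
\mathcal{J}_{a,b}\bigl(e^{i\tau(t+x\cdot\omega)}m\bigr)=e^{i\tau(t+x\cdot\omega)}\Bigl(2i\tau\, Jm+\mathcal{J}_{a,b}m\Bigr),
\end{align*}
because $2\partial_t(\text{phase})\partial_t m-2\nabla(\text{phase})\cdot\nabla m=2i\tau(\partial_t-\omega\cdot\nabla_x)m$ and $a\partial_t$ contributes $i\tau a m$. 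Insert the finite expansion $\sum_{j=0}^N\tau^{-j}m_j$ and collect powers of $\tau$. The order $\tau$ term vanishes if $Jm_0=0$. Each order $\tau^{-j}$ term, $0\le j\le N-1$, vanishes if $m_{j+1}$ solves the transport equation \eqref{eq:transport--mj}; the sign on the right-hand side must match the one produced by this expansion, and it is irrelevant for solvability. What remains is
\begin{align*}
\mathcal{J}_{a,b}v=\tau^{-N}e^{i\tau(t+x\cdot\omega)}\mathcal{J}_{a,b}m_N+\mathcal{J}_{a,b}R.
\end{align*}

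\textbf{Step 2: transport equations.} The operator $J$ differentiates along the lines $s\mapsto(t+s,x-s\omega)$, and $\varphi(x+t\omega)$ is constant along them. Take $\varphi\in C_c^\infty(\mathbb{R}^n)$ with $\supp\varphi\cap\Omega=\emptyset$, and set $m_0=\varphi(x+t\omega)A(t,x)$, where $A$ is the integrating factor $\exp\bigl(-\tfrac12\int a\bigr)$ taken along the same line. Extend $a$ and $b$ by zero, so each $m_j$ is smooth on $\mathbb{R}^{1+n}$. Obtain $m_j$ for $j\ge1$ by Duhamel integration along these lines starting from $t=0$, exactly as in Lemma~\ref{prop:transport}. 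The initial conditions come from the support condition: $\supp\varphi$ is compact and disjoint from the open set $\Omega$, so $\varphi$ and all its derivatives vanish on $\Omega$. Hence $\partial_t^k m_0(0,\cdot)=0$ in $\Omega$ for all $k$. Inductively the source $\mathcal{J}_{a,b}m_{j-1}$ has all time derivatives vanishing at $t=0$, so the same holds for $m_j$.

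\textbf{Step 3: the remainder.} Let $R$ solve $\mathcal{J}_{a,b}R=-\tau^{-N}e^{i\tau(t+x\cdot\omega)}\mathcal{J}_{a,b}m_N$ with zero Dirichlet data and zero initial data. By Step 2 the source has all time derivatives vanishing at $t=0$ in $\Omega$, so it satisfies the compatibility hypotheses of Lemma~\ref{lem:potential and damping}, with $f=0$ and any $m$. Each derivative of the phase costs one power of $\tau$. The estimate \eqref{estimate;potential and damping terms} therefore gives $\|R\|_{\mathcal{E}_{m+1}}\le C\tau^{m-N}$, and this bounds the $H^{m}(\Omega_T)$ norm as well. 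For $m>n+1$, Sobolev embedding yields $\|R\|_{L^\infty(\Omega_T)}\le C\tau^{m-N}$, and taking $N\ge m+1$ gives the bound $C/\tau$ in \eqref{eq: r__{(2)}}.

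\textbf{Main obstacle.} I expect the hardest part to be verifying the infinite-order vanishing of the sources at $t=0$. This is what makes the compatibility conditions of Lemma~\ref{lem:potential and damping} hold uniformly in $\tau$. It should follow from the support assumption on $\varphi$ by differentiating the Duhamel formulas, as in \eqref{eq:beta1_all_time_derivatives_vanish_at_T}.
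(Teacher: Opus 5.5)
Your proposal is correct and follows essentially the route the paper takes for this statement (it repeats the argument of Proposition~\ref{Asymptotic solutions}): the conjugation identity for $e^{i\tau(t+x\cdot\omega)}$, explicit transport solutions built from $\varphi$ supported off $\Omega$ so that all time derivatives vanish at $t=0$ in $\Omega$, and a remainder controlled by Lemma~\ref{lem:potential and damping} plus Sobolev embedding with $N$ large. You are also right to flag the sign: in this forward case the expansion forces $2iJm_j=-\mathcal{J}_{a,b}m_{j-1}$, and your bound $\|R\|_{\mathcal{E}_{m+1}}\lesssim\tau^{m-N}$ is slightly sharper than the paper's $\tau^{2m-N}$, though either suffices.
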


    \section{Proof of {T}heorem \ref{th: main result}}\label{Sec: main result}
    In this section, we prove the main result stated in  Theorem~\ref{th: main result} of the current article. Our proof is based on  linearization technique, which has been used in prior works as well, see for example \cite{CARSTEA2019121,Isakov1993OnUI,isakov2006inverse,Jiang2023InversePF,Kian2018OnTD,LLLS,NakamuraWatanabe2008,Nakamura2020InverseIB} and references therein.  We start by giving a reconstruction of the linear coefficients using techniques involving first-order linearization. Next, using the reconstruction of linear coefficients along with higher-order linearization techniques, we provide a reconstruction of the nonlinear potential coefficient $q$. The current section is divided into two subsections. The first subsection presents a reconstruction formula for linear coefficients, while the second subsection focuses on deriving a reconstruction formula for nonlinear coefficients.
    Before proceeding further, we mention that throughout this section, we denote  a subspace $\mathfrak{C}_c^{\infty}(\mathbb{R}^n)\subset C_c^{\infty}(\mathbb{R}^n)$ to represent  
\begin{align}\label{eq; definition of C(Rn)}
\mathfrak{C}_c^{\infty}(\mathbb{R}^n):=\{g\in C_c^{\infty}(\mathbb{R}^n):supp(g)\cap\Omega=\emptyset\ \mbox{and}\ (supp(g)\pm T\omega)\cap \Omega=\emptyset,\ \forall \ \omega\in\mathbb{S}^{n-1}\}.
\end{align}
\subsection{Reconstruction of the linear coefficients}\label{subsection; linear coefficients}
As mentioned, in this subsection, we derive reconstruction formulae for the damping coefficient $a$ and the linear potential $b$ using the first-order linearization technique. 
Now for given  $f_1,f_2,\dots,f_{\ell} \in H^{m+1}(\Sigma)$, choose $\epsilon = (\epsilon_1,\dots,\epsilon_{\ell})$ with each $\epsilon_j\geq 0$, for $1\leq j\leq l$,  such that  $\epsilon f$ defined by 
\begin{align*}
  \epsilon f := \epsilon_1 f_1 +\dots + \epsilon_{\ell} f_{\ell},\quad 
\end{align*} 
$\mbox{satisfies  $\epsilon f \in \mathcal{D}^\delta_{m+1}$}.$
 Next  denote $u(t,x):=u_{\epsilon f}(t,x)\in \mathcal{E}_{m+1}$, 
a solution to the  following IBVP
\begin{align}\label{eq;Linearization}
\begin{cases}
\Box u(t,x) + a(x)\partial_t u(t,x) + b(x)u(t,x) + q(x)u^{\ell}(t,x)=0, 
&(t,x)\in  \Omega_T,\\
u(t,x)=\epsilon f(t,x), &(t,x)\in \Sigma,\\
u(0,x)=0,\quad \partial_t u(0,x)=0 ,& \quad x\in  \Omega. 
\end{cases}
\end{align}
Now,  differentiate \eqref{eq;Linearization}  with respect to $\epsilon_k$ for $k=1,\dots,{\ell}$,  to obtain
\begin{align}\label{eq;first order Linearization eqn}
\begin{cases}
\Box (\partial_{\epsilon_k}u(t,x))
+ a(x)\partial_t(\partial_{\epsilon_k}u(t,x))
+ b(x)\,\partial_{\epsilon_k}u(t,x)
+ \ell\,q(x)\,u^{\ell-1}_{\epsilon f}(t,x)\,\partial_{\epsilon_k}u(t,x)
=0,
 (t,x)\in  \Omega_T,\\
\partial_{\epsilon_k}u(t,x) = f_k (t,x),   \quad (t,x)\in  \Sigma,\\
\partial_{\epsilon_k}u(0,x)=\partial_t(\partial_{\epsilon_k}u)(0,x)=0,
\quad x\in  \Omega.
\end{cases}
\end{align}
The interchange of the derivative in the  IBVP \eqref{eq;first order Linearization eqn} can be justified by the definition of weak derivative; for more details, see \cite[Theorem~1 on pp.~247]{evans2022partial}. Next, evaluate the derivative at $\epsilon_k=0$ in the IBVP \eqref{eq;first order Linearization eqn}  and invoking the well-posedness of IBVP \eqref{eq;Linearization},  we conclude that 
$u(t,x)\big|_{\epsilon=0}=0$. Consequently, we obtain
\begin{align}\label{eq;subfirst order Linearization eqn}
\begin{cases}
\Box (\partial_{\epsilon_k}u(t,x))\big|_{\epsilon=0}
+ a(x)\partial_t(\partial_{\epsilon_k}u(t,x))\big|_{\epsilon=0}
+ b(x)(\partial_{\epsilon_k}u(t,x))\big|_{\epsilon=0}=0,
& (t,x)\in  \Omega_T,\\
(\partial_{\epsilon_k}u(t,x))\big|_{\epsilon=0}=f_k(t,x), 
& (t,x)\in  \Sigma,\\
(\partial_{\epsilon_k}u)(0,x)\big|_{\epsilon=0}=0,\quad
\partial_t(\partial_{\epsilon_k}u)(0,x)\big|_{\epsilon=0}=0,
& \quad x\in  \Omega.
\end{cases}
\end{align}
Now, if we denote 
\begin{equation}\label{eq:def-vk}
v_{k}(t,x):=\partial_{\epsilon_k}u(t,x)\big|_{\epsilon=0}, \ \ 1\leq k\leq \ell,
\end{equation}
then IBVP \eqref{eq;subfirst order Linearization eqn} can be rewritten as
\begin{align}\label{eq;Notations of first order Linearization eqn}
\begin{cases}
\Box v_{k}(t,x) + a(x)\partial_t v_{k}(t,x) + b(x)v_{k}(t,x) = 0,
& (t,x)\in  \Omega_T,\\
v_{k}(t,x) = f_k(t,x), & (t,x)\in  \Sigma,\\
v_{k}(0,x)=0,\quad \partial_t v_{k}(0,x)=0, 
&\quad x\in  \Omega,
\end{cases}
\end{align}
for any $1\leq k\leq \ell$. 
Now using the fact that  $u \in \mathcal{E}_{m+1}$ and applying a reasoning similar to the one above, we can interchange the mixed derivatives, to obtain that 
\begin{align}
\partial_{\epsilon_k} \nabla u(t,x,\epsilon_1,\dots,\epsilon_{\ell})= \nabla \partial_{\epsilon_k} u(t,x,\epsilon_1,\dots,\epsilon_{\ell}) \quad \text{for} \ (t,x)\in \Omega_T.
\end{align}
Now we restrict the above identity to $(t,x)\in \Sigma$ and evaluate at $\epsilon = 0$, to arrive at 
\begin{align}
\partial_\nu v_k(t,x)
= \left.\partial_{\epsilon_k}\partial_\nu u(t,x,\epsilon_1,\dots,\epsilon_{\ell})\right|_{\Sigma,\, \epsilon=0}.
\end{align}
Thus, the DN map
associated with the IBVP
\eqref{eq;Notations of first order Linearization eqn} denoted by $\Lambda_{a,b,q}^{(v_k)}$, 
is given by
\begin{align}\label{eq:DN-map-v_k}
\Lambda_{a,b,q}^{(v_k)} (f_k)\big|_{\Sigma}:=
\left[\partial_{\epsilon_k}
\Lambda_{a,b,q}(\epsilon_1 f_1 +\dots+ \epsilon_{\ell} f_{\ell})
\big|_{\Sigma}\right]\big|_{\epsilon=0}
=
\left[\left(\partial_{\epsilon_k}
\partial_\nu u_{\epsilon f}\right)
\big|_{\Sigma}\right]\big|_{\epsilon=0}
=
\partial_\nu v_k \big|_{\Sigma}, 
\end{align}
for all $\ 1\leq k\leq \ell.$ Hence, in view of this, we get that knowing the DN map $\Lambda_{a,b,q}$ corresponding to the IBVP \eqref{equation; IBVP}, gives that  $\partial_\nu v_k\big|_{\Sigma}$ is known for each $1\leq k\leq \ell$. 
 Next, our aim is to establish an integral identity which relates the unknown linear coefficients to the known data. For this, we  multiply IBVP \eqref{eq;Notations of first order Linearization eqn}
 by $v_{0}$, where $v_{0}$ is a  solution of the following backward problem 
 \begin{align}\label{FBVP; v}
\begin{cases}
\Box v_{0}(t,x) = 0, & (t,x)\in \Omega_T, \\
v_{0}(T,x) = 0, \quad \partial_t v_{0}(T,x) = 0, & \quad x \in \Omega,
\end{cases}
\end{align}
and integrate over $\Omega_T$, to obtain
\begin{align}
    \int_{\Omega_T} \left(\Box\, v_k(t,x) + a(x)\partial_t\,v_k(t,x) + b (x)v_k(t,x)\right)v_{0}(t,x)\,dx\,dt = 0.
\end{align}
Now, using integration by parts, we have
\begin{align}\label{eq; a(x) and b(x) in terms of known data}
&\quad\int_{\Omega_T}\big[-a(x)\partial_t\,v_k(t,x)v_{0}(t,x)- b (x)v_k(t,x)v_{0}(t,x)\big]\,dx\,dt \\ &
\quad= \int_{\Omega_T} v_{k}(t,x) \Box\, v_{0}(t,x)\,dx\,dt +\int_{\Omega}\big[\partial_t v_{k}(T,x)v_{0}(T,x)- \partial_t v_{0}(T,x)v_k(T,x)\big]\,dx \\ &
\qquad - \int_{\Omega}\big[\partial_t v_{k}(0,x)v_{0}(0,x)- \partial_t v_{0}(0,x)v_k(0,x)\big]\,dx - \int_{\Sigma}v_{0}(t,x)\partial_{\nu} v_k(t,x)\,dS_x\,dt \\ &
\qquad+\int_{\Sigma} v_k(t,x)\partial_{\nu} v_{0}(t,x) \,dS_x\,dt. 
\end{align}
 On the right-hand side of Equation \eqref{eq; a(x) and b(x) in terms of known data}, the first three integrals vanish using \eqref{eq;Notations of first order Linearization eqn} and \eqref{FBVP; v}, and the last two integrals are known using the DN map \eqref{eq:DN-map-v_k}. Thus, we have the following integral identity
\begin{align}\label{integral identity = known}
\int_{\Omega_T}\big[a(x)\partial_t\,v_k(t,x)v_{0}(t,x)+ b (x)v_k(t,x)v_{0}(t,x)\big]\,dx\,dt = \text{known},
\end{align}
for all $v_k$ ($1\leq k\leq \ell$) and $v_0$, solving \eqref{eq;first order Linearization eqn} and \eqref{FBVP; v} respectively. 

Now the idea is to substitute the special solutions for $v_k$ ($1\leq k\leq \ell$) and $v_0$, and obtain the $X-$ray transforms of the coefficients, which, via the techniques from Fourier analysis, give the reconstruction of unknown coefficients. 

Based on the above idea, we substitute the  solutions constructed in
Lemma~\ref{lemma: geom opt sol v_{1}} for $v_k$  solving \eqref{eq;Notations of first order Linearization eqn} and in Lemma~\ref{lemma: geom opt sol v} for $v_0$ solving \eqref{FBVP; v}, 
into the integral identity~\eqref{integral identity = known} and using the following notation 
\begin{align}\label{eq:R11}
\qquad R_{11}
:=\;&
\left(
\omega\cdot\nabla\varphi(x+t\omega)\,A^{+}(t,x)
+ \varphi(x+t\omega)\,\partial_t A^{+}(t,x)
\right)
e^{i\tau(x\cdot\omega+t)}   + \partial_t R_{k}.
\end{align} along with the fact that $A^{-}\equiv 1$,  in the expression of solution given by \eqref{eq: v},   we obtain that 
\begin{align}\label{eqn:Reconst-a}
\begin{aligned} 
\int_{\Omega_T}
\Big[
& a(x)\left(
i\tau\,\varphi(x+t\omega)A^{+}(t,x)e^{i\tau(x\cdot\omega+t)}
+ R_{11}(t,x,\tau)
\right) \\
&\quad +
b(x)\left(
\varphi(x+t\omega)A^{+}(t,x)e^{i\tau(x\cdot\omega+t)}
+ R_{k}(t,x,\tau)
\right)
\Big] \\
&\qquad \times
\left(
\varphi(x+t\omega)e^{-i\tau(x\cdot\omega+t)}
+ R_0(t,x,\tau)
\right)
\,dx\,dt
= \text{known},\ \mbox{for all $\varphi\in \mathfrak{C}_c^{\infty}(\mathbb{R}^n)$ and $\omega\in\mathbb{S}^{n-1}$,} 
\end{aligned} 
\end{align}  where we refer to Equation \eqref{eq; definition of C(Rn)} for the definition of  $\mathfrak{C}_c^{\infty}(\mathbb{R}^n)$. 
Expanding the integrand in Equation~\eqref{eqn:Reconst-a}, we get 
\begin{align}\label{eq:expanded-identity}
\begin{aligned}
&i\tau \int_{\Omega_T}
a(x)\,\varphi^{2}(x+t\omega)\,A^{+}(t,x)\,dx\,dt  + i\tau \int_{\Omega_T}
a(x)\,\varphi(x+t\omega)\,A^{+}(t,x)
e^{i\tau(x\cdot\omega+t)}R_0(t,x,\tau)\,dx\,dt \\
& \quad+ \int_{\Omega_T}
a(x)\,\varphi(x+t\omega)
e^{-i\tau(x\cdot\omega+t)}R_{11}(t,x,\tau)\,dx\,dt + \int_{\Omega_T}
a(x)\,R_{11}(t,x,\tau)R_0(t,x,\tau)\,dx\,dt \\
& \quad+ \int_{\Omega_T}
b(x)\,\varphi^{2}(x+t\omega)\,A^{+}(t,x)\,dx\,dt + \int_{\Omega_T}
b(x)\,\varphi(x+t\omega)\,A^{+}(t,x)
e^{i\tau(x\cdot\omega+t)}R_0(t,x,\tau)\,dx\,dt \\
& \quad+ \int_{\Omega_T}
b(x)\,\varphi(x+t\omega)
e^{-i\tau(x\cdot\omega+t)}R_{k}(t,x,\tau)\,dx\,dt + \int_{\Omega_T}
b(x)\,R_{k}(t,x,\tau)R_0(t,x,\tau)\,dx\,dt = \text{known}, 
\end{aligned}
\end{align}
for all $\varphi\in \mathfrak{C}_c^{\infty}(\mathbb{R}^n)$ and $\omega\in\mathbb{S}^{n-1}$, where in the above equation, we have used the fact that $\omega\in\mathbb{S}^{n-1}$ can be chosen arbitrary in the GO solutions constructed in Lemmas \ref{lemma: geom opt sol v} and \ref{lemma: geom opt sol v_{1}}. 
Next after dividing \eqref{eq:expanded-identity} by $i\tau$, we obtain
\begin{align}\label{eq:final-splitting}
\int_{\Omega_T}
a(x)\,\varphi^{2}(x+t\omega)\,A^{+}(t,x)\,dx\,dt
+ \sum_{j=2}^{8} I_j =\;& \text{known},\ \mbox{for all $\varphi\in \mathfrak{C}_c^{\infty}(\mathbb{R}^n)$ and $\omega\in\mathbb{S}^{n-1}$}, 
\end{align}
where for $2\leq j\leq 8$, the terms $I_j'$s are given by 
\begin{align*}
   I_2 &=
\int_{\Omega_T}
a(x)\,\varphi(x+t\omega)\,A^{+}(t,x)
e^{i\tau(x\cdot\omega+t)}R_0(t,x,\tau)\,dx\,dt,\\
I_3 &=
\frac{1}{i\tau}\int_{\Omega_T}
a(x)\,\varphi(x+t\omega)
e^{-i\tau(x\cdot\omega+t)}R_{11}(t,x,\tau)\,dx\,dt,\\
I_4 &=
\frac{1}{i\tau}\int_{\Omega_T}
a(x)\,R_{11}(t,x,\tau)R_0(t,x,\tau)\,dx\,dt,\\
I_5 &=
\frac{1}{i\tau}\int_{\Omega_T}
b(x)\,\varphi^{2}(x+t\omega)\,A^{+}(t,x)\,dx\,dt,\\
I_6 &=
\frac{1}{i\tau}\int_{\Omega_T}
b(x)\,\varphi(x+t\omega)\,A^{+}(t,x)
e^{i\tau(x\cdot\omega+t)}R_0(t,x,\tau)\,dx\,dt,\\
I_7 &=
\frac{1}{i\tau}\int_{\Omega_T}
b(x)\,\varphi(x+t\omega)
e^{-i\tau(x\cdot\omega+t)}R_{k}(t,x,\tau)\,dx\,dt,\\
I_8 &=
\frac{1}{i\tau}\int_{\Omega_T}
b(x)\,R_{k}(t,x,\tau)R_0(t,x,\tau)\,dx\,dt.
\end{align*}
Using H\"older's inequality, the estimates
\eqref{eq: r_{(1)}}, \eqref{eq: estimate of r_{(2)}},
and the fact that $a,b\in C_c^\infty(\Omega)$, we estimate each of the integrals
$I_2,\dots,I_8$ appearing in~\eqref{eq:final-splitting}. We write $\|v\| \lesssim \|w\|$ to denote the inequality $\|v\| \leq c\,\|w\|$ for some
constant $c>0$ independent of $v$ and $w$. This notation will be used throughout the rest of the article.

\vspace{.2cm}
\noindent\textbf{Estimate of $I_2$.}
\begin{align*}
|I_2|
&\le
\int_{\Omega_T}
\left|a(x)\varphi(x+t\omega)A^{+}(t,x)R_0(t,x,\tau)\right|
\,dx\,dt \\
&\lesssim
\|\varphi\|_{L^{2}(\R^n)}
\|R\|_{L^2(\Omega_T)}
\lesssim
\frac{1}{\tau}\|\varphi\|^{2}_{H^{3}(\R^n)}.
\end{align*}

\noindent\textbf{Estimate of $I_3$.}
Recalling the expression of $R_{11}$ in~\eqref{eq:R11},
we write
\begin{align*}
|I_3|
&\le
\frac{1}{\tau}
\int_{\Omega_T}
\left|a(x)\varphi(x+t\omega)R_{11}(t,x,\tau)\right|
\,dx\,dt \\
&\le
\frac{1}{\tau}
\int_{\Omega_T}
\left|a(x)\varphi(x+t\omega)
\left(
\omega\cdot\nabla\varphi(x+t\omega)A^{+}(t,x)
\right)
\right|
\,dx\,dt \\
&\quad
+ \frac{1}{\tau}
\int_{\Omega_T}
\left|a(x)\varphi^2(x+t\omega)\partial_t A^{+}(t,x)\right|
\,dx\,dt \\
&\quad
+ \frac{1}{\tau}
\int_{\Omega_T}
\left|a(x)\varphi(x+t\omega)\partial_t R_{k}(t,x,\tau)\right|
\,dx\,dt \lesssim
\frac{1}{\tau}\|\varphi\|^{2}_{H^{3}(\R^n)}.
\end{align*}

\noindent\textbf{Estimate of $I_4$.}
Similarly,
\begin{align*}
|I_4|
&\le
\frac{1}{\tau}
\int_{\Omega_T}
\left|a(x)R_{11}(t,x,\tau)R_0(t,x,\tau)\right|
\,dx\,dt \\
&\lesssim
\frac{1}{\tau}
\left(
\|\nabla\varphi\|_{L^2(\R^n)}
\|R\|_{L^2(\Omega_T)}
+
\|R\|_{L^2(\Omega_T)}
+
\|\partial_t R_k\|_{L^2(\Omega_T)}
\|R\|_{L^2(\Omega_T)}
\right) \\
&\lesssim
\frac{1}{\tau}\left(\|\varphi\|^{2}_{H^{3}(\R^n)}+ \|\varphi\|_{H^{3}(\R^n)}\right).
\end{align*}

\noindent\textbf{Estimate of $I_5$.}
\begin{align*}
|I_5|
=
\frac{1}{\tau}
\int_{\Omega_T}
\left|b(x)\varphi^2(x+t\omega)A^{+}(t,x)\right|
\,dx\,dt \lesssim
\frac{1}{\tau}\|\varphi\|^{2}_{H^{3}(\R^n)}.
\end{align*}

\noindent\textbf{Estimate of $I_6$.}
\begin{align*}
|I_6| &= \frac{1}{ \tau}\int_{\Omega_T} \left|b(x)\varphi(x+t\omega)A^{+}(t,x)e^{i\tau(x\cdot\omega + t)}R_0(t,x,\tau)\right|\,dx\,dt\\
&\lesssim
\frac{1}{\tau}\|\varphi\|_{L^2(\R^n)}
\|R\|_{L^2(\Omega_T)} \lesssim
    \frac{1}{\tau^2}\|\varphi\|^{2}_{H^{3}(\R^n)}.
\end{align*}

\noindent\textbf{Estimate of $I_7$.}
\begin{align*}
|I_7|
\lesssim
\frac{1}{\tau}
\|\varphi\|_{L^2(\R^n)}
\|R_k\|_{L^2(\Omega_T)} \lesssim
\frac{1}{\tau^2}\|\varphi\|^{2}_{H^{3}(\R^n)}.
\end{align*}

\noindent\textbf{Estimate of $I_8$.}
\begin{align*}
|I_8|
\lesssim
\frac{1}{\tau}
\|R_k\|_{L^2(\Omega_T)}
\|R\|_{L^2(\Omega_T)} \lesssim
\frac{1}{\tau^3}\|\varphi\|_{H^{3}(\R^n)}^2.
\end{align*}
Combining the above estimates for each of the integrals
$I_2,\dots,I_8$ in~\eqref{eq:final-splitting} and letting
$\tau \to \infty$, we conclude that all contributions of these integrals vanish. Hence, after taking $\tau\rightarrow \infty$ in~\eqref{eq:final-splitting}, we get 
\begin{align}\label{eq:main-identity}
\int_{\Omega_T}
a(x)\,\varphi^{2}(x+t\omega)\,A^{+}(t,x)\,dx\,dt
= \text{known},\ \mbox{for all $\varphi\in \mathfrak{C}_c^{\infty}(\mathbb{R}^n)$ and $\omega\in\mathbb{S}^{n-1}$}. 
\end{align} 
Since $a\in C_c^{\infty}(\Omega)$, we extend $a$ by $0$ outside of $\Omega$, so we have
\begin{align*}
\int_0^T\int_{\mathbb{R}^n} a(x)\varphi^2(x+t\omega)A^{+}(t,x)\,dx\,dt = \text{known}, \ \mbox{for all $\varphi\in \mathfrak{C}_c^{\infty}(\mathbb{R}^n)$ and $\omega\in\mathbb{S}^{n-1}$}.
\end{align*}
Now after using the  change of variable  $ x +t\omega:=y$, along with the Fubini's Theorem, we get 
\begin{align}\label{eq; distribution = known}
\int_{\mathbb{R}^n}\left(\int_0^T a(y -t\omega)A^{+}(t,y -t\omega)\,dt\right)\varphi^2(y)\,dy = \text{known},\ \mbox{for all $\varphi\in \mathfrak{C}_c^{\infty}(\mathbb{R}^n)$ and $\omega\in\mathbb{S}^{n-1}$}. 
\end{align}
 After defining  $ I_{a}(y,\omega) := \int_0^T a(y -t\omega)A^{+}(t,y -t\omega)\,dt$, for $(y,\omega)\in \mathbb{R}^n\times \mathbb{S}^{n-1}$, we obtain that  
\begin{align}\label{T operator is known}
 \int_{\mathbb{R}^n}I_{a}(y,\omega)\varphi^2(y)\,dy=\mbox{known},\  \mbox{for all $\varphi\in \mathfrak{C}_c^{\infty}(\mathbb{R}^n)$ and $\omega\in\mathbb{S}^{n-1}$}.
\end{align}
Next,  for $r \in \left(0, \min\left\{1, \frac{T - \operatorname{diam}(\Omega)}{3}\right\}\right)$, we introduce an open set $\Omega_{r}\subset\mathbb{R}^n$ by 
\begin{align}\label{dom:Omega_epsilon}
    \Omega_{r}
    := \left\{ x \in \mathbb{R}^n \setminus \overline{\Omega} : 
    \operatorname{dist}(x, \Omega) < r \right\}. 
\end{align}
Also for   $\chi\in C_{c}^{\infty}(B_1(0))$ such that  $0\leq \chi \leq 1$ and $\int_{\mathbb{R}^n}\chi^2(x)dx=1$ and  $y_0\in \Omega_{r}$, we consider a  family of test functions $\{\chi_{\eta}\}_{\eta>0}\subset C_c^{\infty}(\R^n)$ by 
\begin{align}\label{Phi_{delta}}
    \chi_{\eta}(y)=\eta^{-\frac{n}{2}}\chi\left(\frac{y-y_0}{\eta}\right).
\end{align}
 By the construction of $\{\chi_{\eta}\}_{\eta>0}$ and $\Omega_{r}$, we  observe that $\{\chi_{\eta}\}_{\eta>0}\subset \mathfrak{C}_{c}^{\infty}(\mathbb{R}^{n})$ whenever $0<\eta<r$. 
Thus, using this family $\{\chi_{\eta}\}_{\eta>0}$ of test functions in Equation \eqref{T operator is known}, we get that
\begin{align*}
 \text{known} &= \int_{\mathbb{R}^n}I_{a}(y,\omega)\chi^2_{\eta}(y)\,dy \\
 &=\eta^{-n}\int_{B(y_0,\eta)}I_{a}(y,\omega)\chi^2\left(\frac{y-y_0}{\eta}\right)\,dy,\ \ \mbox{for all}\ 0<\eta<r\ \mbox{and}\ \ y_0\in \Omega_{r}. 
\end{align*}
Now taking $\eta\rightarrow 0$, in the above identity, we arrive at
\begin{align}\label{Ia is known}
    I_{a}(y_0,\omega)=\mbox{known},\ \mbox{for any $y_0\in \Omega_{r}$ and $\omega\in\mathbb{S}^{n-1}$}.
\end{align}
Next, we simplify the expression of $I_{a}(y_0,\omega)$ as follows
\begin{align*}
   I_{a}(y_0,\omega) &:= \int_0^T a(y_{0} -t\omega)A^{+}(t,y_{0} -t\omega)\,dt \ = \int_0^T a(y_{0} -t\omega)\exp\left(-\frac{1}{2}\int_0^ta(y_{0}+(s-t)\omega)\,ds\right)\,dt\\
    & = \int_0^T -2\,\partial_t \exp\left(-\frac{1}{2}\int_0^ta(y_{0}-\rho\omega)\,d\rho\right)\,dt= -2 \left(\exp\left(-\frac{1}{2}\int_0^T a(y_{0}-\rho\omega)\,d\rho\right) - 1\right), 
\end{align*}
where to arrive at the final expression, we have used the substitution $\rho=t-s$. 
Using this in Equation \eqref{Ia is known}, we get   
\begin{align}\label{Ia known for finite time}
   \int_0^T a(y_{0}-\rho\omega)\,d\rho = \text{known},\  \mbox{for all $y_0\in \Omega_{r}$ and $\omega\in \mathbb{S}^{n-1}$}. 
\end{align}
Now since $a$ is compactly supported in $\Omega$,  $(\Omega_{r} \pm T'\omega)\cap \Omega = \emptyset\text{ for all }T'\geq T$ and $\omega\in \mathbb{S}^{n-1}$, therefore
$a(y_{0}-\rho\omega)=0$ for $\rho\geq T$. Combining all these, yields that 
\begin{align}\label{estimate a_{omega_{eps}}}
   \int_{0}^{\infty} a(y_{0}-\rho\omega)\,d\rho = \text{known}, \ \text{ for all } y_{0}\in \Omega_{r}\ \mbox{and}\ \omega\in \mathbb{S}^{n-1}.
\end{align}
In order to reconstruct that the $X-$ray transform of $a$ is known, we must show that the left-hand side of the above integral is known for any $y_{0}\in \R^n$. To do so, we start with a key  observation which states  that if the line through a point $y_{0}\in \mathbb{R}^{n}$ 
in the direction $\omega$ intersects the domain $\Omega_{r}$, 
then the integral \eqref{estimate a_{omega_{eps}}} follows immediately 
from a change of variable formula. 
We first show that
\eqref{estimate a_{omega_{eps}}} can be shown to hold for every 
$y_{0}\in \overline{\Omega}$ which amounts into knowing $\int_{0}^{\infty} a(y_{0}-\rho\omega)\,d\rho$, for all $y_{0}\in \Omega_{r}\cup \overline{\Omega}$. By the construction of $\Omega_{r}$, 
any point in $\overline{\Omega}$ can be shifted along the direction $\omega\in\mathbb{S}^{n-1}$ 
into $\Omega_{r}$; that is,
\begin{align*}
     y_{0}\in \overline{\Omega}
\ \ \mbox{if and only if} \ \ 
    y_{0}+t_{0}\omega \in \Omega_{r}, \ \ \text{for some } t_{0}\in \mathbb{R}.
\end{align*}
Hence, using the previous observation, we have that the integral $\int_{0}^{\infty} a(y_{0}-\rho\omega)\,d\rho$ is known for all $y_{0}\in \Omega_{r}\cup \overline{\Omega}$ and $\omega\in \mathbb{S}^{n-1}$.

Next, we consider the case when $y_{0}\in (\Omega_{r}\cup \overline{\Omega})^{c}$. 
Now if the line $y_{0}+t\omega$ intersects $\Omega_{r}$, then, as before, 
the result holds by repeating the previous step and if the line through $y_{0}$ does not intersect $\Omega_{r}$, 
then it also does not intersect $\Omega$ as well. Hence, using the fact that $a\in C_{c}^{\infty}(\Omega)$ and combing the above two cases,  we conclude that either $\int_{0}^{\infty} a(y_{0}-\rho\omega)\,d\rho=0$ or known for each $y_{0}\in (\Omega_{r}\cup \overline{\Omega})^{c}$ and $\omega\in\mathbb{S}^{n-1}$. 
 Thus, we obtain that 
 \begin{align*}
      \int_{0}^{\infty} a(y_{0}-\rho\omega)\,d\rho = \text{known}, \ \text{ for all } y_{0}\in \mathbb{R}^n\ \mbox{and}\ \omega\in \mathbb{S}^{n-1},
 \end{align*}
 this immediately implies that $X-$ray  transform $X_a(\omega,y)$ of  $a$ at $y\in\mathbb{R}^n$, in the  direction of  $\omega\in\mathbb{S}^{n-1}$ defined below satisfies 
\begin{align}\label{estimate a_{whole domain}}
 X_a(\omega,y)  := \int_{\R} a(y+\rho\omega)\,d\rho = \text{known}, \quad \mbox{for all}\ (\omega,y)\in T\mathbb{S}^{n-1},
\end{align}
where $ T\mathbb{S}^{n-1} := \{\, (\omega,y) \,|\, \omega\in \mathbb{S}^{n-1}, y \in \omega^{\perp}\}$ is the tangent bundle of the unit sphere $\mathbb{S}^{n-1}$ in $\mathbb{R}^n$. Now using  the Fourier slice Theorem for $X-$ray transform (see Theorem 1.1, \cite{Natterer86}), we get that 
\begin{align}
    \widehat{a}(\xi)
    = \widehat{X_a}(\omega,\xi)
    = \text{known},
    \quad \text{for each } \xi \in \omega^{\perp} \ \mbox{and}\ \omega\in \mathbb{S}^{n-1}.
\end{align}
Now since $\displaystyle \cup_{\omega\in \mathbb{S}^{n-1}}\omega^{\perp}=\mathbb{R}^n$, therefore 
 $\widehat{a}(\xi)$ is known for all $\xi \in \mathbb{R}^n$.
Finally, using  the Fourier inversion formula yields  that the function $a$ is uniquely determined in $\mathbb{R}^n$. Finally, because the function $a$ is compactly supported in $\Omega$, we conclude that
\begin{align}
    a(x) = \text{known }, \quad \text{for all  } x \in \Omega.
\end{align}
This completes the reconstruction of the damping coefficient $a$ in $\Omega$.\qed

\vspace{.4cm}

Next, we reconstruct the linear potential $b$. Since we have already reconstructed the damping coefficient $a$, therefore, $v_{a}$  the solution to the following backward wave problem
\begin{align}\label{FBVP; widetilde{v_0}}
\begin{cases}
\Box v_{a}(t,x) - a(x)\,\partial_t v_{a}(t,x)= 0, & (t,x)\in \Omega_T, \\
v_{a}(T,x) = 0, \quad \partial_t v_{a}(T,x) = 0, & \quad x \in \Omega
\end{cases}
\end{align}
is known in $\Omega_T$. 
Now, we multiply Equation \eqref{eq;Notations of first order Linearization eqn}  by $v_{a}$, and integrate over $\Omega_T$, to obtain 
\begin{align}
    \int_{\Omega_T} \left(\Box\, v_k(t,x) + a(x)\partial_t\,v_k(t,x) + b (x)v_k(t,x)\right)v_{a}(t,x)\,dx\,dt = 0.
\end{align}
Using the  integration by parts, we have
\begin{align}\label{eq;  b(x) in terms of known data}
\begin{aligned}
&-\int_{\Omega_T}b (x)v_k(t,x)v_{a}(t,x)\,dx\,dt = \int_{\Omega_T}  v_{k}(t,x)  \left(\Box v_{a}(t,x) - a(x)\,\partial_t v_{a}(t,x)\right)\,dx\,dt\\
& \qquad +\int_{\Omega}\big[(\partial_t v_{k}v_{a}- \partial_t v_{a}v_k)(T,x)\big]\,dx- \int_{\Omega}\big[(\partial_t v_{k}v_{a}- \partial_t v_{a}v_k)(0,x)\big]\,dx \\
& \qquad + \int_{\Omega}a(x)\big[ v_{k}(T,x)v_{a}(T,x)-  v_{a}(0,x)v_k(0,x)\big]\,dx - \int_{\Sigma}v_{a}(t,x)\partial_{\nu} v_k(t,x)\,dS_x\,dt \\
& \qquad + \int_{\Sigma} v_k(t,x)\partial_{\nu} v_{a}(t,x) \,dS_x\,dt.   
\end{aligned}
\end{align}
On the right-hand side of Equation \eqref{eq;  b(x) in terms of known data}, the first four integrals vanish using \eqref{eq;Notations of first order Linearization eqn}, \eqref{FBVP; v}, \eqref{FBVP; widetilde{v_0}} and the last two integrals are known from the DN map \eqref{eq:DN-map-v_k}, hence, in view of this, we arrive at 
\begin{align}\label{integral identity b = known}
\int_{\Omega_T} b (x)v_k(t,x)v_{a}(t,x)\,dx\,dt = \text{known}, 
\end{align}
for all $v_k$ ($1\leq k\leq \ell$) and $v_{a}$ solving \eqref{eq;first order Linearization eqn} and \eqref{FBVP; widetilde{v_0}} respectively.
 Next after substituting the special solutions for $v_{k}$ and $v_{a}$ from \eqref{eq: v_{(1)}} and \eqref{eq: v} in the above mentioned  integral identity  \eqref{integral identity b = known},  we conclude that 
\begin{align}
 \text{known} =
 \int_{\Omega_T} 
&\Big\{
    b(x)\left(
        \varphi(x+t\omega)\, A^{+}(t,x)\, e^{i\tau(x\cdot \omega + t)}
        + R_{1}(t,x,\tau)
    \right)\times\\
   & \left(\varphi(x+t\omega)\, A^{-}(t,x)\,e^{-i\tau(x\cdot \omega + t)}
    + R_{2}(t,x,\tau)\right)
\Big\}
\, dx\, dt,\ \mbox{for all $\varphi\in \mathfrak{C}_c^{\infty}(\mathbb{R}^n)$ and $\omega\in\mathbb{S}^{n-1}$}.
\end{align}
Simplifying the above expression 
\begin{align}\label{eq:final-splitting in b}
\begin{aligned}
   \text{known} =  &\int_{\Omega_T} b(x) \varphi^2(x+t\omega)\,dx\,dt + \int_{\Omega_T}b(x)\varphi(x+t\omega)A^{-}(t,x)R_{1}(t,x,\tau)e^{-i\tau(x\cdot\omega + t)}\,dx\,dt \\&+ \int_{\Omega_{T}}b(x)\varphi(x+t\omega)A^{+}(t,x)e^{i\tau(x\cdot\omega + t)}R_{2}(t,x,\tau) \,dx\,dt+\int_{\Omega_{T}}b(x)R_{1}(t,x,\tau)R_{2}(t,x,\tau)\,dx\,dt,\\
    &=\int_{\Omega_T} b(x) \varphi^2(x+t\omega)\,dx\,dt + J_2 +J_3  +J_4,\ \mbox{for all $\varphi\in \mathfrak{C}_c^{\infty}(\mathbb{R}^n)$ and $\omega\in\mathbb{S}^{n-1}$}. 
    \end{aligned} 
\end{align}
The integrals $J_k$, $k=2,3,4$, can be estimated in an analogous manner to
$I_j$, $j=2,\dots,8$, using H\"older's inequality together with the
estimates~\eqref{eq: r_{(1)}} and~\eqref{eq: estimate of r_{(2)}}.
In particular, each $J_k$ admits a bound of order $\mathcal{O}(\tau^{-1})$, so each $J_k$ terms vanish as $\tau \to \infty$.
Thus, letting $\tau \to \infty$ in the Equation \eqref{eq:final-splitting in b}, we get the following identity
\begin{align}
    \int_{\Omega_T} b(x) \varphi^2(x+t\omega)\,dx\,dt = \text{known}, \ \mbox{for all $\varphi\in \mathfrak{C}_c^{\infty}(\mathbb{R}^n)$ and $\omega\in\mathbb{S}^{n-1}$}.
\end{align} 
Now since $b\in C_c^{\infty}(\Omega)$, therefore after extending  $b$ by $0$ outside of $\Omega$, we obtain that 
\begin{align*}
\int_0^T\int_{\mathbb{R}^n} b(x)\varphi^2(x+t\omega)\,dx\,dt = \text{known}, \  \mbox{for all $\varphi\in \mathfrak{C}_c^{\infty}(\mathbb{R}^n)$ and $\omega\in\mathbb{S}^{n-1}$}.
\end{align*}
Now, after using the change of variable formula  $ x +t\omega:=y$, along with an application of  Fubini's Theorem, gives us  that 
\begin{align}
\int_{\mathbb{R}^n}\left(\int_0^T b(y -t\omega)\,dt\right)\varphi^2(y)\,dy = \text{known},\  \mbox{for all $\varphi\in \mathfrak{C}_c^{\infty}(\mathbb{R}^n)$ and $\omega\in\mathbb{S}^{n-1}$}.
\end{align} 
Finally, repeating arguments used in the reconstruction of the damping term $a$ from Equation \eqref{Ia known for finite time} to the reconstruction of $a$, we conclude that $
    b(x) \  \text{is known for all}\  x \in \Omega.$
This completes the reconstruction of the linear potential $b$ in $\Omega$.\qed
 
\subsection{Reconstruction of the nonlinear coefficient}

 As mentioned above, in the present subsection, we make use of reconstructed coefficients associated with the linear term along with the higher-order
linearization of the DN map, to reconstruct the nonlinear potential $q$ in $\Omega$. 
Towards this objective, first, we recall the following 
Fa\`a di Bruno formula (see \cite{di1857note,krantz2002primer})  related to the chain rule for higher order derivatives will be instrumental in our future analysis. If $f$ and $g$ are two smooth  functions defined on an open set in $\mathbb{R}^n$ are such that $f\circ g$ is well defined for each point in domain of $g$,  then the following formula holds for any
$n \in \mathbb{N}$
\begin{align}\label{eq:faadibruno}
\begin{aligned}
(f\circ g)^n(x)
= \sum_{\pi \in \mathcal{P}(\{1,\dots,n\})}
f^{(|\pi|)}\!\big(y\big)
\prod_{B \in \pi} \frac{\partial^{(|B|)}g(x)}{\prod_{j \in B} \partial x_j}, \ \mbox{for}\  x\in\mbox{domain}(g)\ \mbox{and} \ y:=g(x),
\end{aligned} 
\end{align} 
where $\mathcal{P}(\{1,\dots,n\})$ denotes the set of all partitions of the index set $\{1,\dots,n\}$ which means that each partition $\pi \in \mathcal{P}(\{1,\dots,n\})$ consists of pairwise disjoint nonempty subsets of $\{1,\dots,n\}$, called \emph{blocks}, whose union is exactly $\{1,\dots,n\}$. We write $|\pi|$ for the number of blocks in the partition $\pi$ for each block $B\in\pi$, and $|B|$ represents its cardinality.

Now apply the formula \eqref{eq:faadibruno} to the composition $f\circ g$, where $f(z):=z^\ell$ and $g(\epsilon) := u_{\eps f}$.  Then, the mixed ${\ell}$-th derivative of $u_{\eps f}^{\ell}$ with respect to $\epsilon_1,\dots,\epsilon_\ell$, is given by 
\begin{align}\label{eq:first_db-step}
    \frac{\partial^\ell \big(f(g(\epsilon))\big)}{\partial \epsilon_1\cdots\partial \epsilon_\ell}
= \sum_{\pi\in \mathcal{P}(\{1,\dots,\ell\})}
f^{(|\pi|)}\!\big(g(\epsilon)\big)\,
\prod_{B\in\pi}
\frac{\partial^{|B|} g(\epsilon)}{\prod_{j\in B}\partial \epsilon_j}.
\end{align}
Next, compute the derivatives of $f(z)=z^\ell$. For $k=1,\dots,\ell$,
\begin{align}
    f^{(k)}(z)=\ell(\ell-1)\cdots(\ell-k+1)\,z^{\ell-k}
=\frac{\ell!}{(\ell-k)!}\,z^{\ell-k},
\end{align}
and $f^{(k)}(z)=0,$ for $k>\ell$. Therefore, only partitions $\pi$ with
$|\pi|=k\le \ell$ contribute. Now, grouping the sum in \eqref{eq:first_db-step} according to the number of blocks $k=|\pi|$ yields that 
\begin{align*} 
&    \frac{\partial^\ell \big(f(g(\epsilon))\big)}{\partial \epsilon_1\cdots\partial \epsilon_\ell}
= \frac{\partial^{\ell} u_{\eps f}^{\ell}}{\partial{\eps_1}\dots\partial{\eps_{\ell}}} = \sum_{\pi\in \mathcal{P}(\{1,\dots,\ell\})} \frac{{\ell}!}{({\ell}-k)!}\, u_{\eps f}^{\,{\ell}-k}
\ \prod_{B \in \pi}\frac{\partial^{|B|} u_{\eps f}}
{\prod_{j \in B} \partial \eps_j}\\
& \quad = \sum_{k=1}^{\ell} \sum_{\pi \in \mathcal{P}_k(\{1,\dots,{\ell}\})} \frac{{\ell}!}{({\ell}-k)!}\, u_{\eps f}^{\,{\ell}-k}
\ \prod_{B \in \pi}\frac{\partial^{|B|} u_{\eps f}}
{\prod_{j \in B} \partial \eps_j}= \sum_{k=1}^{\ell} \frac{{\ell}!}{({\ell}-k)!}\, u_{\eps f}^{\,{\ell}-k}
\ \sum_{\pi \in \mathcal{P}_k(\{1,\dots,{\ell}\})} \prod_{B \in \pi}\frac{\partial^{|B|} u_{\eps f}}
{\prod_{j \in B} \partial \eps_j}.
\end{align*}
Thus, we have obtained the following version of the  Fa\`a di Bruno formula, which will be used in 
reconstruction of the nonlinear coefficient.
\begin{align}\label{eq:final_db-step}
    \frac{\partial^{\ell} u_{\eps f}^{\ell}}{\partial{\eps_1}\dots\partial{\eps_{\ell}}} = \sum_{k=1}^{\ell} \frac{{\ell}!}{({\ell}-k)!}\, u_{\eps f}^{\,{\ell}-k}
\sum_{\pi \in \mathcal{P}_k(\{1,\dots,{\ell}\})}
\ \prod_{B \in \pi}\frac{\partial^{|B|} u_{\eps f}}
{\prod_{j \in B} \partial \eps_j},
\end{align}
where $\mathcal{P}_k(\{1,\dots,\ell\})$ is the set of partitions of the set $\{1,\dots,\ell\}$ into $k$ nonempty blocks $B$.   Next, evaluating the above formula \eqref{eq:final_db-step} at  $\epsilon=0$, along with using the fact that $u_{\eps f}(t,x)\big|_{\epsilon=0}=0,$ in $\Omega_T$, whenever $u_{\epsilon f}$ is solution to \eqref{eq;Linearization}, we get that  
\begin{align}\label{eq: evaluate at eps=0}
     \frac{\partial^{\ell} u_{\eps f}^{\ell}}{\partial{\eps_1}\dots\partial{\eps_{\ell}}}\Biggl|_{\eps = 0} = {\ell}! \prod_{k=1}^{\ell}
\left.  \frac{\partial u_{\eps f}}{\partial{\eps_k}} \right|_{\epsilon=0}.
\end{align}
Now, after evaluating the mixed ${\ell}$-th order derivatives with respect to the parameters
$\epsilon_1,\dots,\epsilon_{\ell}$ of  equations in 
\eqref{eq;Linearization} at $\epsilon=0$, along  with   using \eqref{eq: evaluate at eps=0} and $v_k:=\left. \frac{\partial u_{\eps f}}{\partial{\eps_k}}\right|_{\epsilon=0}:=\left. \partial_{\eps_k} u_{\eps f} \right|_{\epsilon=0}$,  for $1\leq k\leq \ell$, we obtain that $w:= {\partial^{\ell}_{\epsilon_1\dots\epsilon_{\ell}}}{\big|_{\epsilon = 0} }u_{\epsilon f}$,  satisfies
 \begin{align}\label{eq; Non hom linear IBVP}
     \begin{cases}
         \Box w(t,x) + a(x)\partial_t w(t,x) + b(x) w(t,x) = -{\ell}! q(x) \prod\limits_{k=1}^{\ell}v_k(t,x), &(t,x) \in \Omega_T,\\
    w(t,x)   = 0, & (t,x) \in\Sigma, \\
     w (0, x )=0,~~ \partial_t w (0, x ) = 0,  & \quad x \in\Omega,
     \end{cases}
 \end{align}
 where $v_k$ for $1\leq k\leq \ell$, are solutions to \eqref{eq;Linearization}, as  established in previous subsection. 
Now since the DN map $\Lambda_{a,b,q}(\epsilon f)$  (see~\eqref{eq:DN map}) is assumed to be
known for each $\epsilon f$, therefore using 
\begin{align}\label{eq:DN-map-w}
\left[\left.
\partial_{\epsilon_1\dots\epsilon_{\ell}}^{\ell}
\Lambda_{a,b,q}(\epsilon_1 f_1 + \dots+\epsilon_{\ell} f_{\ell})\big|_{\Sigma}\right]
\right|_{\epsilon=0}
=
\left[\left.
\partial_{\epsilon_1\dots\epsilon_{\ell}}^{\ell}
\partial_\nu u_{\epsilon f}
\big|_{\Sigma}\right]
\right|_{\epsilon=0}
=
\partial_\nu w \big|_{\Sigma},
\end{align}
we get that the normal derivative $\partial_\nu w$ is also known on $\Sigma$.

 Next, our aim is to establish an integral identity which relates the unknown $q$ to that of the known data. To obtain this, we  multiply the first equation in  IBVP \eqref{eq; Non hom linear IBVP} by auxiliary function $w_{0}$, solving the following backward  problem
\begin{align}\label{FBVP; w}
     \begin{cases}
         \Box w_0(t,x) - a(x)\,\partial_t w_0(t,x) + b(x)\, w_0(t,x)  = 0, &(t,x) \in \Omega_T,\\
     w_0 (T,x)=0,~~ \partial_t w_0 (T,x) = 0,  & \quad x \in\Omega,
     \end{cases}
 \end{align}
 and integrate over $\Omega_T$, to obtain
  \begin{align}
    \int_{\Omega_T} \left(\Box w(t,x) + a(x)\,\partial_t w(t,x) + b(x)\, w(t,x) +{\ell}! q(x)\prod\limits_{k=1}^{\ell}v_k(t,x)\right)w_{0}(t,x)\,dx\,dt = 0.
\end{align}
Now, using integration by parts, we have 
\begin{align}\label{eq; a(x), b(x) and q(x) in terms of known data}
\begin{aligned} 
& \int_{\Omega_T} {\ell}! q(x)\left(\prod\limits_{k=1}^{\ell}v_k(t,x)\right)\, w_0(t,x)\,dx\,dt =\\& \qquad - \int_{\Omega_T} w(t,x) \left(\Box\, w_0(t,x) - a(x)\,\partial_t w_0(t,x) + b(x)\, w_0(t,x)\right)\,dx\,dt \\ & \qquad  
-\int_{\Omega}\big[ w_0 \partial_t w- w \partial_t w_0 \big](T,x)\,dx  + \int_{\Omega}\big[w_0 \partial_t w  - w \partial_t w_0 \big](0,x)\,dx \\ & \qquad - \int_{\Omega}a(x)\big[ w(T,x)w_0(T,x)-  w(0,x)w_0(0,x)\big]\,dx + \int_{\Sigma}w_0(t,x)\partial_{\nu} w(t,x)\,dS_x\,dt\\& \qquad -\int_{\Sigma} w(t,x)\partial_{\nu} w_0(t,x)\,dS_x\,dt. 
\end{aligned} 
\end{align}
On the right-hand side of Equation \eqref{eq; a(x), b(x) and q(x) in terms of known data}, the first four integrals vanish using   \eqref{eq; Non hom linear IBVP} and \eqref{FBVP; w}. Also, using the knowledge of the DN map $\Lambda_{a,b,q}$ along with \eqref{eq:DN-map-w}, we conclude that the last two terms are known. Hence, we arrive at 
\begin{align}\label{Inegral_Identity_1}
 \int_{\Omega_T}  q(x)\, \left(\prod\limits_{k=1}^{\ell}v_k(t,x)\right)\, w_0(t,x)\,dx\,dt = \text{known}, 
\end{align}
for any choices of $v_k \ (1\leq k\leq\ell)$, solving \eqref{eq;subfirst order Linearization eqn} and $w_0$ solving \eqref{FBVP; w}. 
Next we  use the  asymptotic solutions for $w_0$ and $v_k$ $(1\le k\le \ell)$ from  Propositions \ref{Asymptotic solutions} and \ref{Asymptotic solutions for IBVP},  given by 
\begin{equation}
 w_{0}(t,x)= e^{-{\ell}i\tau\,(t+x \cdot\omega)}\left(\varphi(x+ t\omega)
\exp\left(\int_{0}^{t}\frac{a(x+\tau\omega)}{2}\,d\tau\right)+\sum\limits_{i=1}^{N}\frac{\beta_{i}(t,x)}{\tau^i}\right)+R_0(t,x,\tau),
    \end{equation}
and 
\begin{equation}
 v_{k}(t,x)= e^{i\tau\,(t+x \cdot\omega)}\left(\varphi(x+ t\omega)
\exp\left(-\int_{0}^{t}\frac{a(x+\tau\omega)}{2}\,d\tau\right)+\sum\limits_{i=1}^{N}\frac{m_{i}(t,x)}{\tau^i}\right)+R_{k}(t,x,\tau), 
    \end{equation}
   respectively,  where $\omega\in \mathbb{S}^{n-1}$, $\varphi\in \mathfrak{C}^{\infty}_{c}(\mathbb{R}^n)$,  $R_{j}(t,x,\tau)$ for $0\leq j\leq \ell$, satisfy the estimate $\lVert R_j\rVert_{L^{\infty}(\Omega_T)}\leq \frac{C}{\tau}$, for some constant $C>0$ independent of $\tau$ along with the following 
    \[R_0(T,x,\tau)=\partial_{t}R_{0}(T,x,\tau)=0,\  \mbox{and}\  R_k(0,x,\tau)=\partial_t R_k(0,x,\tau)=0,\ (1\leq k\leq \ell),\   \mbox{for}\ x\in \Omega.\] 
Now, for the sake of  convenience, we write
 \begin{align}
     w_{0} = \mathcal{A}^{-\ell}(\mathcal{F}+\mathcal{G}) +R_0, \qquad v_{k} = \mathcal{A}(\mathcal{P}+\mathcal{Q}) +R_{k}, \qquad 1\le k\le \ell,
 \end{align}
where
\begin{align}\label{eq; a few notations after IE}
\begin{aligned}
   & \mathcal{A} := e^{i\tau\,(t+x \cdot\omega)}, \quad \mathcal{F}:= \varphi(x+ t\omega)
\exp\left(\int_{0}^{t}\frac{a(x+\tau\omega)}{2}\,d\tau\right), \quad \mathcal{G} := \sum\limits_{i=1}^{N}\frac{\beta_{i}(t,x)}{\tau^i},\\
& \mathcal{P}:= \varphi(x+ t\omega)
\exp\left(-\int_{0}^{t}\frac{a(x+\tau\omega)}{2}\,d\tau\right),\quad
\mathcal{Q} := \sum\limits_{i=1}^{N}\frac{m_{i}(t,x)}{\tau^i}.
\end{aligned}
\end{align}
Using the above notations, we now compute the term $w_0\!\left(\prod_{k=1}^{\ell}v_k\right)$,  appearing in the integral identity as follows 
\begin{align}
 w_0\!\left(\prod_{k=1}^{\ell}v_k\right)
 = \left( \mathcal{A}^{-\ell}(\mathcal{F}+\mathcal{G}) + R_0\right)
 \left(\prod_{k=1}^{\ell}\big(\mathcal{X}+R_{k}\big)\right),
\end{align}
where $\mathcal{X} := \mathcal{A}(\mathcal{P}+\mathcal{Q})$. Using the identity
\begin{align}
\prod_{k=1}^{\ell}\big(\mathcal{X}+R_{k}\big)
= \sum_{J\subset\{1,\dots,\ell\}}
\mathcal{X}^{\ell-|J|}\prod_{j\in J} R_{j},
\end{align}
we obtain that 
\begin{align}
\begin{aligned}
w_0\!\left(\prod_{k=1}^{\ell}v_k\right)
&= \left( \mathcal{A}^{-\ell}(\mathcal{F}+\mathcal{G}) + r\right)
\sum_{J\subset\{1,\dots,\ell\}}
\mathcal{X}^{\ell-|J|}\prod_{j\in J} R_{j} \nonumber\\[2pt]
&= \mathcal{A}^{-\ell}(\mathcal{F}+\mathcal{G})(\mathcal{A}(\mathcal{P}+\mathcal{Q}))^{\ell}  + \mathcal{A}^{-\ell}(\mathcal{F}+\mathcal{G})
\sum_{\emptyset\neq J\subset\{1,\dots,\ell\}}
(\mathcal{A}(\mathcal{P}+\mathcal{Q}))^{\ell-|J|}\prod_{j\in J} R_{j} \nonumber\\[2pt]
&\quad + R_0\sum_{J\subset\{1,\dots,\ell\}}
(\mathcal{A}(\mathcal{P}+\mathcal{Q}))^{\ell-|J|}\prod_{j\in J} R_{j}.
    \end{aligned}
\end{align}
Since $\mathcal{A}^{-\ell}(\mathcal{A}(\mathcal{P}+\mathcal{Q}))^{\ell}=(\mathcal{P}+\mathcal{Q})^{\ell}$, it follows that
\begin{align}
\begin{aligned}
w_0\!\left(\prod_{k=1}^{\ell}v_k\right)
&= (\mathcal{F}+\mathcal{G})(\mathcal{P}+\mathcal{Q})^{\ell}  + \mathcal{A}^{-\ell}(\mathcal{F}+\mathcal{G})
\sum_{\emptyset\neq J\subset\{1,\dots,\ell\}}
(\mathcal{A}(\mathcal{P}+\mathcal{Q}))^{\ell-|J|}\prod_{j\in J} R_{j}\\ 
&\qquad + R_0\sum_{J\subset\{1,\dots,\ell\}}
(\mathcal{A}(\mathcal{P}+\mathcal{Q}))^{\ell-|J|}\prod_{j\in J} R_{j}.
\end{aligned} 
\end{align}
Expanding of  $(\mathcal{P}+\mathcal{Q})^{\ell}$ using Binomial Theorem, yields that 
\begin{align}
\begin{aligned}
w_0\!\left(\prod_{k=1}^{\ell}v_k\right)
&= \mathcal{F} \mathcal{P}^{\ell} + \mathcal{G} \mathcal{P}^{\ell}
+ (\mathcal{F}+\mathcal{G})\sum_{j=1}^{\ell} \mathcal{P}^{\ell-j}\mathcal{Q}^{j}  + \mathcal{A}^{-\ell}(\mathcal{F}+\mathcal{G})
\sum_{\emptyset\neq J\subset\{1,\dots,\ell\}}
(\mathcal{A}(\mathcal{P}+\mathcal{Q}))^{\ell-|J|}\prod_{j\in J} R_{j}\\ &\qquad  + R_0 \sum_{J\subset\{1,\dots,\ell\}}
(\mathcal{A}(\mathcal{P}+\mathcal{Q}))^{\ell-|J|}\prod_{j\in J} R_{j}.
\end{aligned} 
\end{align}
After using the above expansion along with equation \eqref{eq; a few notations after IE} in the integral identity \eqref{Inegral_Identity_1}, we obtain
\begin{align}\label{eq:final-splitting in q}
\begin{aligned}
\int_{\Omega_T}
q(x)\,\varphi^{\ell+1}(x+t\omega)\,\left(A^{+}(t,x)\right)^{\ell -1}
\,dx\,dt
+ \sum_{j=2}^{5}\widetilde{I}_j=\;& \text{known}, \ \mbox{for all $\varphi\in \mathfrak{C}_c^{\infty}(\mathbb{R}^n)$},
\end{aligned}
\end{align}
 where in the above identity, $\widetilde{I}_{j}$ for $j=2,3,4,5$ are given by  
\begin{align*}
    \begin{aligned}
\widetilde{I}_{2}:=\int_{\Omega_T}
q(x)\left(\sum_{i=1}^{N}\frac{\beta_{j}(t,x)}{\tau^{i}}\right)
\varphi^{\ell}(x+t\omega)
\exp\!\left(-\frac{\ell}{2}\int_{0}^{t}a(x+\tau\omega)\,d\tau\right)\,dx\,dt,
\end{aligned} 
\end{align*}
\begin{align*}
\begin{aligned}
\widetilde{I}_{3}&:=\int_{\Omega_T}
q(x)\Bigg(
\varphi(x+t\omega)
\exp\!\left(\int_{0}^{t}\frac{a(x+\tau\omega)}{2}\,d\tau\right)
+\sum_{i=1}^{N}\frac{\beta_{i}(t,x)}{\tau^{i}}
\Bigg)\\
&\quad\times
\left(\sum_{j=1}^{\ell}
\varphi^{\ell-j}(x+t\omega)
\exp\!\left(-\frac{\ell-j}{2}\int_{0}^{t}a(x+\tau\omega)\,d\tau\right)
\left(\sum_{i=1}^{N}\frac{m_{i}(t,x)}{\tau^{i}}\right)^{j}
\right)\,dx\,dt, 
\end{aligned} 
\end{align*} 
\begin{align*}
\begin{aligned}
\widetilde{I}_{4}&:=\int_{\Omega_T}
q(x)\,e^{-i\ell\tau(t+x\cdot\omega)}
\Bigg(
\varphi(x+t\omega)
\exp\!\left(\int_{0}^{t}\frac{a(x+\tau\omega)}{2}\,d\tau\right)
+\sum_{i=1}^{N}\frac{\beta_{i}(t,x)}{\tau^{i}}
\Bigg)\\
&\ \times
\left(
\sum_{\emptyset\neq J\subset\{1,\dots,\ell\}}
\left(
e^{i\tau(t+x\cdot\omega)}
\left(
\varphi(x+t\omega)
\exp\!\left(-\int_{0}^{t}\frac{a(x+\tau\omega)}{2}\,d\tau\right)
+\sum_{i=1}^{N}\frac{m_{i}(t,x)}{\tau^{i}}
\right)
\right)^{\ell-|J|}
\prod_{j\in J}R_j
\right)\,dx\,dt 
\end{aligned} 
\end{align*} 
and 
\begin{align*}
\begin{aligned}
\widetilde{I}_{5}&:= \int_{\Omega_T}
q(x)\,R_0(t,x,\tau)
\Bigg(
\sum_{J\subset\{1,\dots,\ell\}}
\Bigg(
e^{i\tau(t+x\cdot\omega)}
\Bigg(
\varphi(x+t\omega)
\exp\!\left(-\int_{0}^{t}\frac{a(x+\tau\omega)}{2}\,d\tau\right)
\\&\qquad \qquad \qquad \qquad \qquad \qquad\qquad\qquad \qquad  +\sum_{i=1}^{N}\frac{m_{i}(t,x)}{\tau^{i}}
\Bigg)
\Bigg)^{\ell-|J|}
\prod_{j\in J}R_j
\Bigg)\,dx\,dt.
\end{aligned}
\end{align*} 
 The integrals $\widetilde{I_j}$, $j=2,3,4,5$, admit straightforward estimates based on the boundedness of continuous functions over compact sets together with the estimates~\eqref{eq: r_{(2)}} and \eqref{eq: r__{(2)}}. For the sake of completeness, we provide the estimates for each of these terms as follows. In the below mentioned estimates of $\widetilde{I}_j$,  $j=2,3,4,5$, the constant $C>0$ is independent of $\tau$. 

\vspace{.2cm}
\noindent\textbf{Estimate of $\widetilde I_2$.}
Each term in $\widetilde I_2$ contains at least one factor $\tau^{-1}$.
Since all remaining factors are uniformly bounded in $\Omega_T$, we obtain
\begin{align}
|\widetilde I_2|
\le C\,|\Omega_T|\,\tau^{-1}
\le \frac{C}{\tau}.  
\end{align}

\noindent\textbf{Estimate of $\widetilde I_3$.}
Similarly, every term in $\widetilde I_3$ involves powers of
$\tau^{-1}$ coming from the terms $m_j/\tau^j$.
Hence, we have
\begin{align}
|\widetilde I_3|
\le \frac{C}{\tau}.  
\end{align}

\noindent\textbf{Estimate of $\widetilde I_4$.}
In this case, each summand contains at least one factor $R_k$.
Using $\|R_k\|_{L^\infty}\le C/\tau$ and boundedness of the remaining
terms, we obtain
\begin{align}
    |\widetilde I_4|
\le C\,\|R_k\|_{L^\infty(\Omega_T)}
\le \frac{C}{\tau}.
\end{align}

\noindent\textbf{Estimate of $\widetilde I_5$.}
Here every term contains the remainder $R_0(t,x,\tau)$, and therefore we have
\begin{align}
    |\widetilde I_5|
\le C\,\|R_0\|_{L^\infty(\Omega_T)}
\le \frac{C}{\tau}.
\end{align}
In particular, we have seen that each $\widetilde{I_j}$ admits a bound of order $\mathcal{O}(\tau^{-1})$
, so  each $\widetilde{I_j}$ terms vanish as $\tau \to \infty$.
Thus, letting $\tau \to \infty$ in  \eqref{eq:final-splitting in q}, we arrive at the following identity 
\begin{align*}
\int_{\Omega_{T}}q(x)\varphi^{\ell+1}(x+t\omega)\left(A^{+}(t,x)\right)^{\ell-1}\,dx\,dt\,= \,\text{known},\ \mbox{for all $\varphi\in \mathfrak{C}_c^{\infty}(\mathbb{R}^n)$,} 
\end{align*}
where in the above step, $\omega\in\mathbb{S}^{n-1}$, is same as  in the expression of asymptotic solutions. 
Now since $q\in C_c^{\infty}(\Omega)$, we extend $q$ by $0$ outside of $\Omega$ (still denote by $q$), to  obtain 
\begin{align*}
\int_0^T\int_{\mathbb{R}^n} q(x)\varphi^{\ell+1}(x+t\omega)\left(A^{+}(t,x)\right)^{\ell-1}\,dx\,dt = \text{known},\ \mbox{for all $\varphi\in \mathfrak{C}_c^{\infty}(\mathbb{R}^n)$}.
\end{align*}
 Using the change of variable $x +t\omega=y$,  we have
\begin{align*}
\int_0^T\int_{\mathbb{R}^n} q(y -t\omega)\varphi^{\ell+1}(y)\left(A^{+}(t,y -t\omega)\right)^{\ell-1}\,dy\,dt = \text{known},\ \mbox{for all $\varphi\in \mathfrak{C}_c^{\infty}(\mathbb{R}^n)$}.
\end{align*}
Since the integrand is absolutely integrable (because $q$ has compact support), the  use of Fubini's Theorem yields that 
\begin{align}
\int_{\mathbb{R}^n}\left(\int_0^T q(y -t\omega)\left(A^{+}(t,y -t\omega)\right)^{\ell-1}\,dt\right)\varphi^{\ell+1}(y)\,dy = \text{known},\ \mbox{for all $\varphi\in \mathfrak{C}_c^{\infty}(\mathbb{R}^n)$ and $\omega\in\mathbb{S}^{n-1}$}.
\end{align}
Now, if we define 
$\Psi(y) := \int_0^T q(y -t\omega)\left(A^{+}(t,y -t\omega)\right)^{\ell-1}\,dt$, for $y\in \mathbb{R}^n$, then the above equation reduces to 
\begin{align}
\int_{\mathbb{R}^n}\Psi(y)\varphi^{\ell+1}(y)\,dy = \text{known},\ \mbox{for all $\varphi\in \mathfrak{C}_c^{\infty}(\mathbb{R}^n)$}.
\end{align}
 Now for $\chi\in C_{c}^{\infty}(B_1(0))$ such that  $0\leq \chi \leq 1$ and $\int_{\mathbb{R}^n}\chi^{\ell+1}(x)dx=1$ and  $y_0\in \Omega_{r}$  with $r \in \left(0, \min\left\{1, \frac{T - \operatorname{diam}(\Omega)}{3}\right\}\right)$ (see \eqref{dom:Omega_epsilon} for definition of  $\Omega_r$), we consider a  family of test functions $\{\chi_{\eta}\}_{\eta>0}\subset C_c^{\infty}(\R^n)$ by 
\begin{align}\label{eq;Phi_{delta}}
    \chi_{\eta}(y):=\eta^{\tfrac{-n}{{\ell+1}}}\chi\left(\frac{y-y_0}{\eta}\right),\ \quad \eta>0 \ \mbox{and}\ y\in\mathbb{R}^n.
\end{align}
 By the construction of $\{\chi_{\eta}\}_{\eta>0}$ and $\Omega_{r}$, we  observe that $\{\chi_{\eta}\}_{\eta>0}\subset \mathfrak{C}_c^{\infty}(\mathbb{R}^n)$, whenever $0<\eta<r$.
 Thus, we get that  
\begin{align}\label{eq; distribution = known with function q}
\int_{\mathbb{R}^n}\Psi(y)\chi_{\eta}^{\ell+1}(y)\,dy = \text{known},  \ \mbox{for all}\ r>\eta>0 \ \mbox{and} \ (y_0,\omega)\in\Omega_r\times \mathbb{S}^{n-1}. 
\end{align}
By a similar analysis as we did earlier, we deduce that the function $ \Psi$ is known in $\Omega_{r}$, where $\Omega_{r}$ is as defined in Equation \eqref{dom:Omega_epsilon}. Thus, for any $y_0 \in \Omega_{r}$, we have 
\begin{align*}
    \text{known} = \Psi(y_{0}) &:= \int_0^T q(y_{0} -t\omega)\left(A^{+}(t,y_{0} -t\omega)\right)^{\ell-1}\,dt \\
    & = \int_0^T q(y_{0} -t\omega)\exp\left(-\frac{(\ell-1)}{2}\int_0^t a(y_{0}-\rho\omega)\,d\rho\right)\,dt, \quad \text{where} \quad \rho = t-s.
\end{align*}
Now the fact that  $q$ is compactly supported in $\Omega$ and $(\Omega_{r} \pm K\omega)\cap \Omega = \emptyset, \text{ for all }K\geq T$,  gives us
$q(y_{0}-t\omega)=0$, whenever  $t\geq T$ and $y_0\in\Omega_r$. Hence,  we have
\begin{align*}
    \Psi(y_0) = \int_0^{\infty} q(y_{0} -t\omega)\exp\left(-\frac{(\ell-1)}{2}\int_0^ta(y_{0}-\rho\omega)\,d\rho\right)\,dt \, =\,  \text{known}, \  \text{ for } y_{0}\in \Omega_{r}. 
\end{align*}
From here, after repeating the arguments similar to the one used for reconstructing the damping coefficient $a$, we conclude that
\begin{align}\label{Psi is known in Rn}
    \Psi(y) = \int_0^{\infty} q(y -t\omega)\exp\left(-\frac{(\ell-1)}{2}\int_0^ta(y-\rho\omega)\,d\rho\right)\,dt \, =\,  \text{known}, \  \text{ for } y\in\mathbb{R}^{n}. 
\end{align}
From this, we show that  $q(y)$ is known for each $y\in\Omega$. 

We start by defining 
\begin{align}
    G(t,y) := q(y - t\omega) \ \mbox{and} \  H(t,y) := \exp\left(-\frac{(\ell-1)}{2}\int_0^t a(y - s\omega)\, ds\right),\, \mbox{for  $(t,y)\in (0,\infty)\times \mathbb{R}^n$,}
\end{align}
so that the function $\Psi(y)$ becomes
\begin{align}\label{eq:int_1}
\Psi(y):= \int_0^{\infty} G(t,y) H(t,y)\,dt,\ \ \mbox{for}\ y\in \mathbb{R}^n.
\end{align}
Next, for $\gamma\in \mathbb{R}$, we define
\begin{align*}
    U_{\gamma}(y) := \Psi(y-\gamma\omega), \, \mbox{where $y\in \mathbb{R}^n$}.
\end{align*}
Differentiating $U_{\gamma}$ with respect to $\gamma$ and evaluating at $\gamma = 0$, we obtain the following identity
\begin{align}\label{eq: derivative of U_{gamma}}
    -\omega \cdot \nabla_y \Psi(y) = \left.\frac{d}{d\gamma} U_{\gamma}(y)\right|_{\gamma=0}
= \int_0^{\infty} \left.\frac{d}{d\gamma} \left[ G_\gamma(t,y) H_\gamma(t,y) \right] \right|_{\gamma=0} dt,
\end{align}
where
\begin{align}
    G_\gamma(t,y) := q(y - (\gamma + t)\omega), 
    \ \mbox{and}\  H_\gamma(t,y) := \exp\left(-\frac{(\ell-1)}{2}\int_0^t a(y - (\gamma + s)\omega)\, ds\right).
\end{align}
To solve the right-hand side of the Equation \eqref{eq: derivative of U_{gamma}}, first we differentiate $G_\gamma$ and $H_\gamma$ with respect to $\gamma$ and evaluate at $\gamma = 0$. Thus, using the following identity
\begin{align}
    \left.\frac{d}{d\gamma} \int_0^t a(y - (\gamma + s)\omega)\, ds \right|_{\gamma=0}
= a(y- t\omega) - a(y),
\end{align}
we obtain
\begin{align}\label{eq: derivative of H_{gamma}}
    \left.\frac{d}{d\gamma} H_\gamma(t,y) \right|_{\gamma=0}
= -\frac{(\ell-1)}{2}H(t,y)\left(a(y- t\omega) - a(y)\right),
\end{align}
and
\begin{align}\label{eq: derivative of G_{gamma}}
    \left.\frac{d}{d\gamma} G_\gamma(t,y) \right|_{\gamma=0}
= -\omega \cdot \nabla_y q(y - t\omega)
= \frac{d}{dt} q(y - t\omega) = G'(t,y).
\end{align}
After using the above Equations \eqref{eq: derivative of H_{gamma}} and \eqref{eq: derivative of G_{gamma}} in Equation \eqref{eq: derivative of U_{gamma}}, we obtain the following identity
\begin{align}\label{eq:int_2}
\hspace{-1cm}-\omega \cdot \nabla_y \Psi(y)
= \int_0^{\infty} \left[ G'(t,y) H(t,y) - \dfrac{(\ell-1)}{2}G(t,y)\left\{a(y- t\omega) - a(y)\right\} H(t,y) \right]\,dt.
\end{align}
Next, integrating $G'(t,y) H(t,y)$ by parts and using $H'(t,y) = -\frac{(\ell-1)}{2}a(y - t\omega) H(t,y)$, we get
\begin{align}\label{eq; apply IBP}
    \int_0^{\infty} G'(t,y) H(t,y)\, dt
&= \left[G(t,y) H(t,y)\right]_{t = 0}^{t = \infty} - \int_0^{\infty} G(t,y) H'(t,y)\, dt\nonumber\\
&= -q(y) + \frac{(\ell-1)}{2}\int_0^{\infty} G(t,y) a(y - t\omega) H(t,y)\, dt.
\end{align}
Combining \eqref{eq; apply IBP} and \eqref{eq:int_2} together with \eqref{eq:int_1}, we obtain the following transport Equation
\begin{align}
-\omega \cdot \nabla_{y}\, \Psi(y) = -q(y) +  \frac{(\ell-1)}{2}\,a(y)\, \Psi(y),\, \mbox{for all $y\in \mathbb{R}^n$}.
\end{align}
Equivalently, we have
\begin{align}\label{eq:transport-eq}
\omega \cdot \nabla_{y}\, \Psi(y) + \frac{(\ell-1)}{2} \,a(y)\, \Psi(y) = q(y),\, \mbox{for all $y\in \mathbb{R}^n$}.
\end{align}
Thus, finally using \eqref{Psi is known in Rn}, together with the fact that $q$ is zero outside $\Omega$, we conclude that  $q(y)$ is known for all $y\in\Omega.$
This completes the proof of the main Theorem \ref{th: main result}.\qed
    \section*{Acknowledgments}
\begin{itemize}
      \item R.~Bhardwaj gratefully acknowledges the Senior Research Fellowship from the UGC, the Government of India.
	\item M.~Kumar acknowledges the support of PMRF (Prime Minister's Research Fellowship) from the government of India for his research.
	\item M.~Vashisth work was supported by the ISIRD project 9--551/2023/IITRPR/10229 from IIT Ropar.
    \item This work was partially supported by the FIST program of the Department of Science and Technology, Government of India, Reference No. SR/FST/MS-I/2018/22(C).\\
\end{itemize}

     \noindent\textbf{Data availability statement.} \ Data sharing is not applicable to this article, as no datasets were generated or analyzed during the current study.\\
    
 \noindent\textbf{Conflict of interest.} \
The authors declared that they have no potential conflicts of interest with respect to the research, authorship, and/or publication of this article.
	
	 \bibliography{math} 
	
	 \bibliographystyle{alpha}

\end{document}